\documentclass[10pt. oneside]{amsart}\usepackage[lmargin=1in,rmargin=1in, bmargin=1in, tmargin=1in]{geometry}
\usepackage[dvipsnames]{xcolor}
\usepackage{amsmath, amssymb,tikz}
\usepackage{tikz-cd}
\usepackage{tikz}
\usepackage{todonotes}
\usepackage{mathrsfs}
\usepackage{extarrows}
\usepackage{graphicx}
\usepackage[breaklinks, pagebackref]{hyperref}
\usepackage{IEEEtrantools}
\usepackage{mathrsfs}
\usepackage[utf8]{inputenc}
\usepackage[english]{babel}

\usepackage{comment}
\usepackage{csquotes}

\usepackage[shortlabels]{enumitem}

\tikzset{
labl1/.style={anchor=south, rotate=90, inner sep=1.2mm}
}
\usepackage{nicematrix}  
\usepackage{tikz-cd}
\usepackage{tikz}
\usetikzlibrary{decorations.pathreplacing,matrix,calc,positioning}
\usepackage{nicematrix}
\tikzset{ 
    table/.style={
        matrix of math nodes,
        row sep=-\pgflinewidth,
        column sep=-\pgflinewidth,
        nodes={rectangle,text width=3em,align=center},
        text depth=1.25ex,
        text height=2.5ex,
        nodes in empty cells,
        left delimiter=[,
        right delimiter={]},
        ampersand replacement=\&
    }
}
\usetikzlibrary{decorations.pathreplacing, calligraphy}
\hypersetup{
    colorlinks=true,
    linkcolor=blue,
    filecolor=magenta, 
    citecolor=red,     
    urlcolor=gray,
    pdftitle={explicitdescent},
    pdfpagemode=FullScreen,
    }
\usepackage{tikz}

\makeatletter
\newcommand*{\encircled}[1]{\relax\ifmmode\mathpalette\@encircled@math{#1}\else\@encircled{#1}\fi}
\newcommand*{\@encircled@math}[2]{\@encircled{$\m@th#1#2$}}
\newcommand*{\@encircled}[1]{%
  \tikz[baseline,anchor=base]{\node[draw,circle,outer sep=0pt,inner sep=.2ex] {#1};}}
\makeatother

\DeclareFontEncoding{LS1}{}{}
\DeclareFontSubstitution{LS1}{stix}{m}{n}

\makeatletter
  \newcommand*\textmathversion{\csname textmv@\math@version\endcsname}
  \newcommand*\textmv@normal{m}
  \newcommand*\textmv@bold{b}
\makeatother

\setcounter{tocdepth}{1}
\usepackage{dynkin-diagrams}  

\usepackage{mathdots}
\usepackage{mathtools,bm}

\newcommand\givensymbol[1]{
}
\DeclarePairedDelimiterX\Set[1]\{\}{%
  #1%
}
\usepackage{amsmath,amsthm,amssymb}
\usepackage{colonequals}

\newcommand{\QQ}{\mathbb{Q}}

\newcommand{\ZZ}{\mathbb{Z}}

\newcommand{\CC}{\mathbb{C}}

\newcommand{\ch}{\mathrm{ch}}

\newcommand{\Sh}{\mathrm{Sh}}    
\newcommand{\Ab}{\mathbb{A}}   
    
\newcommand{\pr}{\mathrm{pr}}

\DeclareMathOperator{\Gal}{Gal}

\numberwithin{equation}{section}

\newtheorem{theorem}[equation]{Theorem}
\newtheorem{proposition}[equation]{Proposition}
\newtheorem{lemma}[equation]{Lemma}
\newtheorem{corollary}[equation]{Corollary}
\newtheorem*{corollary*}{Corollary}

\newenvironment{theorembis}[1]{%
    \begin{theorem}%
}{%
    \end{theorem}%
    \addtocounter{equation}{-1} 
}

\newtheorem{question}[equation]{Question}

\newtheorem*{notation*}{Notation}


\theoremstyle{definition}

\newtheorem{definition}[equation]{Definition}
\newtheorem{problem}[equation]{Question}    

\theoremstyle{remark}
\newtheorem{remark}[equation]{Remark}
\newtheorem{example}{Example}[section]  
    
\newtheorem{notation}[equation]  {Notation}    
\newtheorem{note}[equation]{Note}

\usepackage{tikz-cd}
\usepackage{mathrsfs}

\DeclareFontFamily{U}{wncy}{}
\DeclareFontShape{U}{wncy}{m}{n}{<->wncyr10}{}
\DeclareSymbolFont{mcy}{U}{wncy}{m}{n}
\DeclareMathSymbol{\sha}{\mathord}{mcy}{"58}

\newcommand{\SL}{\mathrm{SL}}







\newcommand{\Gb}{\mathbf{G}}   
\newcommand{\Tb}{\mathbf{T}}

\newcommand{\Hb}{\mathbf{H}}




\newcommand{\Addresses}{{
  \bigskip
  \footnotesize


 \textsc{Department of Mathematics, University of California Santa Barbara, CA 93106-3080}\par\nopagebreak
  \textit{E-mail address}: \texttt{swshah@ucsb.edu}

}}

\usepackage{stmaryrd}

\newcommand{\GG}{\mathbb{G}}

\newcommand{\GL}{\mathrm{GL}}

\newcommand{\et}{\mathrm{\acute{e}t}}

\newcommand{\RR}{\mathbb{R}}

\title{Explicit  Hecke  descent for special cycles}  
\author{Syed Waqar Ali Shah} 
\date{}
\begin{document}

\begin{abstract}  We derive an explicit formula for the action of  a geometric Hecke correspondence on special cycles on a   Shimura variety in terms of    such  cycles  at a fixed neat  level  and  compare it with   another   closely  related  expression sometimes used in literature.  We provide evidence that the two formulas do not agree in general. 
\end{abstract}    

\maketitle

\tableofcontents     

\section{Introduction}  The study of special cycles on Shimura varieties often entails  aspects of  smooth representation theory. In \cite{kudla}, Kudla    considered certain  weighted    linear combinations of  special  cycles on orthogonal Shimura varieties whose equivariance properties are neatly captured by Schwartz spaces admitting a smooth  group action.    Similar constructions have appeared in the works of Cornut \cite{Cornut}, Jetchev \cite{Jetchev}, Li-Liu \cite{LiLiu}, Lai-Skinner  \cite{Skinnerlai} and several other recent works.  
One motivation for studying such equivariant labeling of cycles is to facilitate the study of  Hecke action on them. Although 
conventions differ from one author to another, it seems to us that such descriptions can give rise to two fundamentally different ways of defining  the   said  Hecke action, and only one of these apriori  agrees with the geometric action of Hecke correspondences in all situations. Unfortunately, the `non-geometric' action seems to be occasionally 
 used in literature and assumed to be compatible  with the geometric one. 

The purpose of this note is twofold. First, we derive a general formula for the action of a  Hecke  correspondence on a given irreducible special cycle in terms of such cycles that are all at a fixed (finite) level,   assuming some mild conditions on the level  and  the  Shimura data.  This formula also generalizes to  any  abstract pushforward construction     involving such cycles, e.g., Gysin maps in  \'{e}tale cohomology.    Second, we  address the  aforementioned  compatibility of the two actions on the  group   of equidimensional  algebraic cycles and  provide evidence that they only  agree under very   special  circumstances.  

\subsection{Motivation} We  motivate our question in the  familiar setting of modular curves.   Let $ K $ be a compact open subgroup of $ \GL_{2}(\Ab_{f}) $ that is contained  in a  standard  congruence  subgroup    of level $ N \geq 3 $.    Let  $ Y_{K} $     denote    the      modular curve   of level $ K $ in the sense of \cite{DeligneTS}.         It is  a     smooth quasi-projective  algebraic     curve defined  over   $ \QQ $ with complex points given by    $$ Y_{K}( \CC  )  =  \GL_{2}(\QQ)  \backslash  \big  ( \mathcal{H}^{\pm} \times  \GL_{2}(    \Ab_{f}) / K   \big   )     ,     $$
where $ \mathcal{H}^{\pm}  \colonequals   
 \CC \setminus    \RR  $ and $ \Ab_{f} $ denotes the  ring   of finite rational  adeles.  For $ (x , g)  \in \mathcal{H}^{\pm} \times \GL_{2}(\Ab_{f}) $, we denote by $ [x, g]_{K}  \in Y_{K}(  \CC  )  $ the class of $ (x,g) $.      Let $ \mathcal{C}(K)  \colonequals   \ZZ [ Y_{K}(\CC) ]$ denote   the 
group of  complex   divisors on $ Y_{K}(\CC) $. For
$ \sigma \in \GL_{2}(\Ab_{f}) $, let us denote 
$ K_{\sigma  }  \colonequals K \cap \sigma K  \sigma ^ { - 1}  $  and    $ K^{\sigma}  \colonequals   K \cap \sigma^{-1} K \sigma $ for brevity.    Consider the diagram of $ \QQ$-varieties     
\begin{center}   
\begin{tikzcd}[row sep = tiny]  &           Y_{ K_{\sigma} }   \arrow[dl, "{p}"']       \arrow[r,  "{[\sigma]}", "{\sim}"']     &      Y_{K^{\sigma}    }  \arrow[dr,  "q"]   \\
Y_{K} &   &  &   Y_{K}   
\end{tikzcd}    
\end{center}     
in which $ p, q $ are the natural degeneracy maps induced by the inclusions $ K_{\sigma} \hookrightarrow K $, $ K^{\sigma} \hookrightarrow K $   respectively  and $    [\sigma   ] $ is  the twisting  isomorphism   given on $ \CC $-points via $ [x,g]_{K_{\sigma} }  \mapsto   [x  ,  g   \sigma      ]_{K^{\sigma}} $. The maps $ p $ and $ q $ are finite \'{e}tale and 
induce pushforward and pullbacks on divisors with expected  properties.      The $ \ZZ $-linear map $   [K \sigma K ]_{*}  =   q_{*}  \circ  [\sigma]_{*} \circ p ^{*}  \colon  \mathcal{C}(K) \to \mathcal{C}(K) $  is called 
 the  \emph{covariant Hecke correspondence}\footnote{This is sometimes referred to as  ``Albanese"    convention for Hecke corrrespondences  \cite[p.\ 443] {Ribet}. See also  \cite[Remark 4.1.1, 4.3.2]{LLZmodular} and \cite[\S 2.6]{LLZHilb} for a  discussion.}  induced by $ \sigma $. It is  easily  seen  that 
\begin{equation}   \label{CM1}       [ K \sigma K ]_{*}  \cdot  [x,g]_{K}  =  \sum 
_{ \gamma \in K \sigma K / K }  [x, g  \gamma ]_{K}   . 
\end{equation}    On  the  other hand, one can  construct  the  inductive  limit $$  \widehat{\mathcal{C}}  \colonequals 
 \varinjlim  \nolimits _{  L }   
\mathcal{C} (L)   $$ over all (sufficiently small)  levels   $ L  $  of   $  \GL_{2}(\Ab_{f})  $, where the transition maps for the limit are given by   pullbacks   along the  degeneracy maps $ Y_{L'} \to Y_{L} $ induced by  the   inclusions $ L' \hookrightarrow L $.        
It comes   equipped  with a smooth  left  action   
$$    \GL_{2}(\Ab_{f})  \times   \widehat{  \mathcal{C}}   \to   \widehat{\mathcal{C}}   $$        
given by pullbacks along   the     twisting isomorphisms.   Explicitly, if $ z \in  \widehat{\mathcal{C}} $  can be represented by $ [x, g]_{L} \in \mathcal{C}(L)$, then  $ h \cdot z $ for $ h \in  \GL_{2}(\Ab_{f})   $ is the class of $ [x ,g  h^{-1}]_{ h   L  h   ^ { - 1 } } $ in $ \widehat{\mathcal{C}}   $.  Since  divisors satisfy  \'{e}tale descent,  one  has $   \mathcal{C}(K) = \widehat{\mathcal{C}}^{K} $, the equality obtained via the  inclusion $  \mathcal{C}(K) \hookrightarrow  \widehat{\mathcal{C}}^{K}   $. Under this identification,  the pushforward $ \mathcal{C}(L) \to \mathcal{C}(K) $ along the    degeneracy map induced by the inclusion of any (and not necessarily normal) subgroup $ L$ of $ K $ corresponds    to  the norm map $ \sum \nolimits   _    { \gamma \in K / L }  \gamma $.     
In particular, $ q_{*}  $ is identified with $ \sum _{  K / K^{\sigma} }  \gamma $ and $ [K \sigma K]_{*} $  with    $ \sum_ { \gamma \in K/ K^{\sigma} } \gamma \sigma^{-1} $.  Thus  
\begin{equation}
\label{CM2}     [K \sigma K]_{*}  \cdot [x,g]_{K}  =   \sum _{  \delta  \in K \backslash K \sigma K }  [x, g \delta ]_{ \delta ^{-1} K \delta } .    \end{equation}
where the right hand side is interpreted as an element of $  \widehat{ \mathcal{C}} ^{K}   $.  Note  that  individual  points in this    expression     live on  modular  curves of \emph{different}    levels. 

The RHS of (\ref{CM2}) computing the action of $[K \sigma K]_{*}$ is formal in nature and remains valid, for instance,  in the cohomology of any Shimura variety and, more generally, any locally symmetric space.   The RHS of (\ref{CM1})  is however more useful, since it explicitly gives a divisor in terms of points that all share the  original   level  $ K $. It is also what one usually finds in   the   literature on Shimura curves; see, for example, \cite[\S 3.4]{CorVastal} and \cite[\S 1.4]{Shouwu}. We say that (\ref{CM1}) provides an \emph{explicit descent formula} for the formal $K$-invariant expression given by (\ref{CM2}).

\subsection{Main problem}  We may make similar considerations for special cycles on Shimura varieties.   In the preceding discussion, say   $ E $ is an imaginary quadratic    field and $ x=   x_{\iota} $ is defined  as the image of `$ \mathrm{pt} $'     under the  morphism of Shimura  data    
\begin{equation}   \label{CM4}         \iota \colon   (  \mathrm{Res}_{E/\QQ} \GG_{m} , \left \{ \mathrm{pt} \right \}   ) \hookrightarrow  (\GL_{2}, \mathcal{H}^{\pm} )   
\end{equation}  
obtained by considering $ E $ as a $ \QQ $-vector space.  Then the points $ z_{L}(g)   \colonequals     [x_{\iota}, g]_{L}    \in  Y_{L}(\CC)  $  for $ g \in   \GL_{2}(\Ab_{f}) $ are algebraic and referred to as special points (or CM points). One may equivalently describe $ z_{L}(g) $ as the point obtained by taking the distinguished geometric connected component `$ [1]  $'    of the zero-dimensional  Shimura variety for  $ \mathrm{Res}_{E/ \QQ} \GG_{m} $ of    level  $ (\mathrm{Res}_{E/ \QQ} \GG_{m})(\Ab_{f}) \cap gLg^{-1}$, embedding it into $ Y_{g L g^{-1}} $ and taking its image under  the     twisting   isomorphism        $ [g]  \colon  Y_{g L g^{-1}} \xrightarrow{    } Y_{L} $. See  \cite[\S 2]{Norm} for details on this setup.         In  this      paradigm, we may replace   (\ref{CM4})  by     an arbitrary morphism  of  Shimura  data  
$$ \iota  \colon    (\mathbf{H}, X_{\Hb}) \hookrightarrow (\Gb, X_{\Gb})  $$ 
and construct, for any  compact open subgroup $ L $ of  $    \Gb ( \Ab_{f} ) $, \emph{irreducible    special      cycles}  $ z_{L}(g)  $ (defined over the algebraic  closure $ \overline{\QQ} \subset \CC   $ of $ \QQ $)  of a fixed codimension $n $  on $ \mathrm{Sh}_{\Gb}(L) $   for any $ g \in  \Gb(\Ab_{f}) $   in an  analogous  fashion. Let $ \mathcal{C}^{n}(L) $  denote  the  free $\ZZ$-module on codimension $n$ irreducible $ \bar{\QQ} $-subvarieties  on $ \Sh_{\Gb}(L) $.    
One may  then  ask  for  the analog of (\ref{CM1}) for  the  Hecke action on the special cycle $ z_{L}(g) $ in the space $ \mathcal{C}^{n}(L) $. 

More precisely, let $ K $ denote  a fixed compact open subgroup of $ \Gb(\Ab_{f}) $. To avoid pathologies arising from degrees of degeneracy maps,    we assume that $ K $ is \emph{neat} \cite[\S 0.1]{PinkThesis}.   
Fix   a $ \sigma \in \Gb(\Ab_{f} )$ and  denote $ K_{\sigma}   \colonequals    K \cap \sigma K \sigma^{-1} $ and  $ K^{\sigma} \colonequals  K \cap  \sigma^{-1} K  \sigma $ as before.  We  define the \emph{covariant Hecke correspondence} 
\begin{equation*}  [  K  \sigma   K  ] _{*}  \colon     \mathcal{C}^{n}(K) \xrightarrow{ \pr^{*} }  \mathcal{C}^{n}( K_{\sigma} 
 )  \xrightarrow{ [\sigma]_{*} }  \mathcal{C}^{n}(K^{\sigma} )    \xrightarrow{\pr_{*} }   \mathcal{C}^{n}(K),   
 \end{equation*}    
 where $ \pr^{*} $ (resp.,  $ \pr_{*}$) denotes the flat pullback (resp.,    proper pushforward) of cycles along the degeneracy map induced by the inclusion $ K_{\sigma}  \hookrightarrow  K $ (resp.,  $K^{\sigma}  \hookrightarrow  K $)  and   $ [\sigma]_{*} $ is the isomorphism induced by pushforward along  the twisting map\footnote{Throughout, the action of $  \sigma \in \Gb(\Ab_{f}) $ on the tower of Shimura varieties is denoted by $ [\sigma] $ and gives a  \emph{right action} of $ \Gb(\Ab_{f}) $, as in  \cite[Definition 5.14]{Milne}.} $ [\sigma]  \colon  \Sh_{\Gb}(K_{\sigma}   ) \to  \Sh_{\Gb}( K^{\sigma}  ) $.    Cf.\   \cite[\S 6.2]{Noot} and  \cite[\S 1.2]{Esnault}.      In analogy with (\ref{CM2}), it  is  not hard to  show that for any $ g \in \Gb(\Ab_{f})$, we have           
 \begin{equation*}    
 [K \sigma K]_{*} \cdot  z_{K}(g)   =    \sum_{\delta \in K \backslash K \sigma K } z_{\delta^{-1} K \delta }( g \delta ),
 \end{equation*}  where the RHS is   again     viewed as an element in the inductive limit of $ \mathcal{C}^{n}(L) $ over all (sufficiently small) levels $ L $.           A natural question that arises at this point is the following.
\begin{problem}   \label{mainquestion} Is   there  a $ \ZZ $-linear combination of  irreducible   special  cycles  $ z_{K}(-)  \in    \mathcal{C}^{n}(K) $     that equals  $  [K \sigma K]_{*}  \cdot  z_{K}(g) $?  Equivalently, is it possible to write  the   $ K  $-invariant   
   limit expression 
$\sum   \nolimits
_ { \delta \in  K \backslash  K \sigma K }  z_{ \delta ^{-1}  K  \delta } ( g \delta ) $ in terms of such cycles? If so, can one give a formula for it?       
\end{problem} 
A first   guess  suggested by  (\ref{CM1})  would  be that  
\begin{equation}   \label{falsezkg}   [K \sigma K]_{*}  \cdot  z_{K}(g)     \stackrel{?}{=}  \sum_{ \gamma \in K \sigma K  /  K    } z_{K}(g \gamma ) . 
\end{equation}   
Unfortunately, this is not  necessarily  always the case. While the answer to Question \ref{mainquestion} is affirmative, the correct expression is given by Corollary \ref{zkgcoro} below and  involves various non-trivial coefficients. In fact, the number of irreducible special cycles   
in our expression seems    to be far less in general than what is prescribed by (\ref{falsezkg}). See \S \ref{mainfalseexamplessec} for computations.    This   discrepancy      
   continues to exist   if one replaces $ z_{K}(g) $ with the full fundamental cycle of the source Shimura variety.     Analogs of   (\ref{falsezkg})  seem to be used in  \cite[\S 3.1]{JetchevBoum}, \cite[p.\ 846]{LiLiu} and   various       other writings.     

On the    other  hand,     we show that for the cases of zero cycles and codimension zero cycles, our expression does agree with (\ref{falsezkg}). This is of course in agreement with (\ref{CM1}). For a simple group theoretic explanation of this   phenomenon, we refer the reader to \S \ref{Schwartzsection}.

Our  approach to   Question  \ref{mainquestion}       is based on an elementary  but   useful  idea  that explicit distribution relations among objects associated with $ \mathrm{Sh}_{\Hb} $ (such as fundamental cycles of $ \mathrm{Sh}_{\Hb} $)    can be   parlayed  for corresponding relations on $ \mathrm{Sh}_{\Gb} $ via  a  gadget  we refer to as    \emph{mixed Hecke correspondence} \cite[\S 2]{AES}. This approach  utilizes the built-in functoriality properties of pushforward and pullbacks of cycles  on   schemes. Consequently,  our methods readily  apply in  the    more  general  settings of   pushforwards  of  cycle  classes 
into  cohomology  with  coefficients in  local systems on $ \mathrm{Sh}_{\Gb} $, such  as  the  one    studied in  \cite{Anticyclo}.   This is a  non-trivial extension, since cohomology with integral coefficients does not satisfy Galois descent in general. 
We establish this formula   in     Theorem \ref{descent} for abstract pushforwards using the language of RIC functors developed in \cite[\S 2]{AES}, which we briefly recall in \S \ref{RICfuncsec}.  We then derive the expression asked for in Question \ref{mainquestion} in  Corollary  \ref{zkgcoro}.   
We anticipate that our formula will find applications in establishing Euler system   norm  relations when one works with more general coefficients   systems  as,  for instance,       in the approach   originally    envisioned by Cornut \cite{Cornut}.

\subsection{Outline} This note is structured as follows. In \S  \ref{recollections}, we  collect relevant facts  about Shimura varieties needed  in the derivation of our formula.  In \S \ref{pushforwardsofcycles}, we study the special cycles $ z_{K}(-) $ from the introduction and their analogs  for  abstract pushforwards.   In \S \ref{theformula}, we derive our main formula  and   provide some simple  examples.    In \S   \ref{mainfalseexamplessec}, we furnish two sets of counterexamples to  (\ref{falsezkg}).  In \S \ref{Schwartzsection}, we investigate an analogous question in the setting of function spaces that  provides another perspective on the issue.  This  section   may  be  read  independently of the rest of this  note.

\subsection{Acknowledgments}The author is grateful to Christophe Cornut for several discussions and to Barry Mazur for valuable feedback on an earlier draft. The author also thanks the anonymous referees for their diligent reading of the manuscript and for numerous helpful suggestions that significantly improved the exposition of this note.
\section{Recollections}

\label{recollections}

In this section, we recall some basic facts  about Shimura varieties that we will need later on.   We also  generalize a result from \cite{LSZ}, which  provides  a simple criteria  for checking when maps between Shimura  varieties are closed immersions. It is however only  needed  in   \S   \ref{mainfalseexamplessec}. 
\subsection{Shimura varieties} \label{shimurasetup}    Suppose  that    
\begin{equation} \iota \colon  (\mathbf{H}, X_{\mathbf{H} }  )      \hookrightarrow  ( \mathbf{G} ,  X_{\mathbf{G}   }    ) 
\end{equation}    is   an      embedding  of Shimura   data, i.e.,  $ \mathbf{H} $, $ \mathbf{G} $ are   connected  reductive   algebraic  groups over $ \QQ $, the pairs $ (\mathbf{H},X_{\mathbf{H}}) $, $ (\mathbf{G},   X_{\mathbf{G}}) $  satisfy Milne-Deligne axioms (SV1)-(SV3)   \cite[Definition 5.5]{Milne} and $ \iota  \colon  \Hb \to \Gb $ is an embedding preserving   the       said datum.    Denote by $ \Ab_{f} $ the  ring    of finite   rational  adeles and  set     
$$ H \colonequals \Hb(\Ab_{f}),   \quad   \quad              G  \colonequals \Gb(\Ab_{f} )  .   $$      For each neat (\cite[\S 0.1]{PinkThesis}) compact open subgroup $ K $ of $  G    $, there is a smooth   equidimensional     quasi-projective Shimura variety $ \Sh_{\Gb}(K) $ defined    over a canonically defined subfield of $ \CC   $    called the reflex field of $ \Gb $.  Similarly for $ \Hb $.   We will however exclusively consider  all     Shimura varieties as objects in $\mathbf{Sch}_{\overline{\QQ} }$, the category of schemes over the algebraic closure of $ \QQ $ in $ \mathbb{C} $.         

The $ \CC $-points of $ \mathrm{Sh}_{\Gb}(K)   $    are given by $ \Gb(\QQ) \backslash (X _{\Gb} \times G/K) $ and a general point is denoted as $ [x, g]_{K} $ where $ x \in X_{\mathbf{G}} $  and    $ g \in G $. Similar notations and conventions  will be used for corresponding objects  associated  with  $ \Hb $.     We set  
\begin{equation}   \label{codim}     n  \colonequals \dim \Sh_{\Gb}(K) - \dim \Sh_{\Hb}(U)   
\end{equation}  for  some  choice  of  compact open subgroups $  K \subset G $  and $ U  \subset H $. Then $ n $ is independent of $ K$  and $ U $. In what follows, we    think of $ H $ as a closed  subgroup of $ G  $. Then  for any compact open subgroup $ K $ of $ G $, the intersection     $ H \cap K    \colonequals H \cap \iota^{-1}(K)  $ is a compact open subgroup of $ H $,  which is   neat if $  K $ is.         

\subsection{Maps} 
For any $ g \in G $ and $  K $ a neat  compact open subgroup of $ G $,  there is  a   \emph{twisting isomorphism}       
\begin{equation}   \label{twistingiso}       [g]  =  [g]_{K, g^{-1}Kg }    \colon  \mathrm{Sh}_{\Gb}(K)   \xrightarrow{\sim}      \mathrm{Sh}_{\Gb}(g^{-1} K g )   
\end{equation}    
given on $ \CC $-points by $ [x, \gamma]_{K}  \mapsto  [x,  \gamma g ] _ { g ^{-1} K g }  $ for all  $ x \in X_{\Gb} $ and $   g     \in  G $. 
Let $ \mathbf{Z}_{\Gb} $ (resp.,  $  \mathbf{Z}_{\Hb}  $) denote the center of $ \Gb $ (resp.,  $ \Hb $). For any compact open subgroup $ L $ contained in $  K $  that satisfies 
\begin{equation}     \label{dagger} K \cap  \mathbf{Z}_{\Gb}(\QQ) = L \cap \mathbf{Z}_{\Gb}( \QQ  ),    \end{equation} 
the natural  surjection   $ \pr_{L,K} \colon \mathrm{Sh}_{\Gb}(L)  \to \mathrm{Sh}_{\Gb}(K) $ given by $ [x, g]_{L} \mapsto [x,g]_{K} $ is finite \'{e}tale  of degree $ [K:L] $  by \cite[Lemma 2.7.1]{AES}.  When (SV5) of \cite{Milne} holds for $ (\Gb, X_{\Gb}) $, the condition (\ref{dagger}) is automatic by neatness of $ K $.  
One may also consider the limit $$ \Sh_{\Gb}(\CC) \colonequals  \varprojlim   \nolimits  _ {   L } \Sh_{\Gb}(L)(\CC) $$ taken over all    neat      compact open subgroups $ L$ of  $  \Gb(\Ab_{f})  $ along the  degeneration maps induced by the  inclusions     $  L '  \hookrightarrow   L  $.        Then $ \Sh_{\Gb}(\CC) $ inherits a  continuous right action of $  G   =   \Gb(\Ab_{f}) $ and  $  \Sh_{\Gb}(K)(\CC) = \Sh_{\Gb}(\CC) / K $ for any $ K $. If $ (\Gb, X_{\Gb}) $ satisfies (SV5),    any neat compact open  subgroup  acts  freely on $ \Sh_{\Gb}(\CC)  $.

We also have maps between  Shimura  varieties  of  $ \Hb $ and $ \Gb  $.  If $ K $ is a neat compact open subgroup of $ G $ and   $ U  $ is a compact open subgroup of $ H \cap K $, 
 there is a finite unramified   morphism   
 \begin{equation} \label{iotaUKmap}  \iota_{U,K}  \colon  \mathrm{Sh} _{\Hb} ( U )  \to   \Sh    _{\Gb}(K)   \end{equation}    
 of smooth schemes  in    $ \mathbf{Sch}_{\overline{\QQ} } $,  which is  given on $ \CC $-points   by     $  [y, h]_{U}  \mapsto [y, h]_{K} $ for all $ y \in X_{\Hb} $ and    $  h \in H  $.    We can   always   choose a  compact open subgroup $ L =  L_{U} $ of $ G $  which contains  $ U $ such that $  \mathrm{Sh}_{\Hb}(U) \to \mathrm{Sh}_{\Gb}(L ) $ is a closed immersion  \cite[Proposition 1.15]{DeligneTS}. We may assume   that $  L \subset K $ by replacing $ L $ with  its intersection with  $ K $ if necessary, since $ \mathrm{Sh}_{\Hb}(U) \to \mathrm{Sh}_{\Gb}(L) $ factors as $$ \mathrm{Sh}_{\Hb}(U)  \to     \mathrm{Sh}_{\Gb}(L \cap K ) \to \mathrm{Sh}_{\Gb}(L)   $$   
and \cite[\href{https://stacks.math.columbia.edu/tag/07RK}{Lemma 07RK}]{stacks-project} implies that $ \mathrm{Sh}_{\Hb}(U) \to \mathrm{Sh}_{\Gb}(L \cap K )$ is  also  a     closed immersion. We may moreover assume    that  (\ref{dagger}) holds for $ L $. Indeed  if we  let   $ L'   \colonequals  L \Gamma $ where $ \Gamma \colonequals \mathbf{Z}_{\Gb}(\QQ) \cap K $,   we  have      $ L \subset L'  \subset K $ 
and it is  elementary to show that $ \mathrm{Sh}_{\Gb}(L) = \mathrm{Sh}_{\Gb}(L') $.     In any case,  $ \iota_{U, K } $ is a  composition of a closed  immersion  followed  by  a  finite  \'{e}tale  surjection and one may   therefore 
apply  various   natural     pushforward  constructions in this setting (see e.g.,  \cite[Proposition A.5]{Anticyclo}). In all cases, the natural map \begin{equation}
\label{infty}    \iota_{\infty}  \colon \Sh_{\Hb}(\CC)  \to  \Sh_{\Gb}(\CC) \end{equation}    at   the  infinite    level     is  injective, again  by \cite[Proposition 1.15]{DeligneTS}.    The  following    generalization  of  \cite[Proposition  5.3.1]{LSZ} gives a  criteria for immersion at finite  levels.   

\begin{lemma}         Assume  that $  ( \Gb, X_{\Gb} ) $  satisfies (SV5) and that there exists a $ w \in \mathbf{Z}    _{\Hb}(\QQ) $ such that $ \mathbf{H} $ is the centralizer of $ w $ in $ \Gb $. 
  Let $ K  $  be a   compact open subgroup of $ G $ such that there exists a   neat    compact open subgroup of $ G $ that contains both $ K $ and $ w K w^{-1} $.    Then $ \iota_{  H \cap K , K  } $ is a closed  immersion.   \label{closedimmersion}    
\end{lemma}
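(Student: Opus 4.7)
The plan is to reduce the closed immersion claim to injectivity on $\CC$-points. From \S\ref{shimurasetup} the morphism $\iota_{H \cap K, K}$ is finite and unramified, and its source and target are schemes of finite type over the algebraically closed base $\overline{\QQ}$. It therefore suffices to verify that the diagonal $\Sh_\Hb(H \cap K) \to \Sh_\Hb(H \cap K) \times_{\Sh_\Gb(K)} \Sh_\Hb(H \cap K)$ is surjective on $\overline{\QQ}$-points, which amounts to injectivity of $\iota_{H \cap K, K}$ on $\CC$-points; combined with unramifiedness this makes the map a monomorphism, and a finite monomorphism is a closed immersion.

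For injectivity, set $U \defeq H \cap K$ and let $K'$ denote a neat compact open subgroup of $G$ containing both $K$ and $wKw^{-1}$, whose existence is granted by hypothesis. Suppose two points $[y_1, h_1]_U, [y_2, h_2]_U \in \Sh_\Hb(U)(\CC)$ have the same image in $\Sh_\Gb(K)(\CC)$. Unwinding, one obtains $\gamma \in \Gb(\QQ)$ and $k \in K$ with $y_2 = \gamma y_1$ and $h_2 = \gamma h_1 k$. Once it is shown that $\gamma \in \Hb(\QQ)$, the identity $k = h_1^{-1} \gamma^{-1} h_2$ forces $k \in \Hb(\Ab_f) \cap K = U$, and hence $[y_1, h_1]_U = [y_2, h_2]_U$; so everything reduces to showing that $\gamma$ centralizes $w$.

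The archimedean input is that $w \in \mathbf{Z}_\Hb(\QQ)$ is central in $\Hb$ and therefore acts trivially on $X_\Hb$, giving $w y_i = y_i$ for $i = 1, 2$ and hence
\begin{equation*}
(\gamma w \gamma^{-1} w^{-1}) y_2 = \gamma w \gamma^{-1} y_2 = \gamma w y_1 = \gamma y_1 = y_2.
\end{equation*}
The adelic input uses $\gamma = h_2 k^{-1} h_1^{-1}$ in $\Gb(\Ab_f)$ together with the fact that $h_1, h_2 \in \Hb(\Ab_f)$ centralize $w$, which yields
\begin{equation*}
\gamma w \gamma^{-1} w^{-1} = h_2 \bigl( k^{-1} \cdot (w k w^{-1}) \bigr) h_2^{-1} \in h_2 K' h_2^{-1},
\end{equation*}
since the bracketed factor lies in $K \cdot (wKw^{-1}) \subseteq K'$. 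The conjugate $h_2 K' h_2^{-1}$ is again neat, so by (SV5) the group $\Gb(\QQ) \cap h_2 K' h_2^{-1}$ acts freely on $X_\Gb$, as recalled in \S\ref{shimurasetup}. The element $\gamma w \gamma^{-1} w^{-1}$ belongs to this intersection and fixes $y_2$, so it must be the identity, giving $\gamma w \gamma^{-1} = w$ and hence $\gamma \in \Hb(\QQ)$.

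The only subtle step, and the sole place where the precise hypothesis on $K'$ (rather than merely neatness of $K$) is used, is in the identification $k^{-1}(wkw^{-1}) \in K'$; everything else amounts to straightforward bookkeeping around the characterization $\Hb = Z_\Gb(w)$.
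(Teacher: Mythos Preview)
Your proof is correct and follows essentially the same approach as the paper: reduce to injectivity on $\CC$-points via the finite-unramified/universal-injectivity criterion, then show a commutator with $w$ lies in a neat compact open subgroup while fixing a point, and invoke (SV5) to conclude it is trivial. The only cosmetic difference is that the paper carries this out at the infinite level $\Sh_\Gb(\CC)$ using the right $K$-action and the commutator $\kappa w\kappa^{-1}w^{-1}$, whereas you unpack the double-coset representatives directly and use the left $\Gb(\QQ)$-action via $\gamma w\gamma^{-1}w^{-1}$; these are dual formulations of the same argument.
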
  

\begin{proof}   Since  $  \iota _ { H \cap K , K } $ is a finite unramified morphism, it  suffices by \cite[\href{https://stacks.math.columbia.edu/tag/04XV}{Lemma 04XV}]{stacks-project} to check that $ \iota_{H \cap K , K} $ is universally injective. By \cite[\href{https://stacks.math.columbia.edu/tag/01S4}{Lemma 01S4}]{stacks-project}, this amounts to  the surjectivity of the diagonal map $$ \Delta  :   \Sh_{\Hb}(H \cap K ) \to   \Sh_{\Hb}(H \cap K ) \times_{\Sh_{\Gb}(K) } \Sh_{\Gb}(H \cap   K )  .  $$ 
By  \cite[I, \S3, 6.11]{DG}\footnote{See  also the notion of \emph{ultraschemes} in  \cite[Appendice]{EGAI}.},             
one can check surjectivity on closed points and therefore,  also    on      $ \CC $-points. But it is easily seen that   $ \Delta(\CC) $ is surjective if and only if $ \iota_{ H \cap K , K } (\CC) $ is injective. So it suffices to check the injectivity of $ \iota_{H \cap K , K  } $ on $  \CC $-points.

To this   end, say there exists   a     $ P \in \Sh_{\Hb}(\CC) $ and $  \kappa \in K $   such that $  P \kappa $ lies in $  \Sh_{\Hb}(\CC) $,  where  we are viewing $ \mathrm{Sh}_{\Hb}(\CC) $ as a subset of $ \Sh_{\Gb}(\CC) $ via (\ref{infty}).  Since $ w   \in \mathbf{Z}_{\Hb}(\QQ)   $     fixes $ X_{\Hb} $ pointwise and commutes with  all  elements of $ \Gb(\Ab_{f})$, the (right) action of $ w $ on $ \Sh_{\Gb}(\CC) $ fixes $ \mathrm{Sh}_{\Hb}(\CC) $ pointwise.  Therefore, 
$$  P w  \kappa =  P  \kappa    = P  \kappa w $$     This  implies  that   $  \lambda  \colonequals  \kappa w \kappa^{-1} w ^{-1}  $  stabilizes $ P $.   If $ L $ is a compact open subgroup of $ G $ that contains both $ K $ and $ w  K w ^{-1} $, then $ \lambda =  \kappa  (w  \kappa ^{-1} w ^{-1} ) \in L $.  Recall that  if $ (\Gb, X_{\Gb}) $ satisfies (SV5), then the right action of any neat compact open subgroup of $ G $ on $ \Sh_{\Gb}(\CC) $ is   free.     Thus   
if $ L  $ is neat, the only element of $ L $ that can stabilize  $ P $ is  the   identity. So our assumptions imply that  $  \lambda  $ must be identity,  i.e., $ w =  \kappa w \kappa ^{-1} $. Since the centralizer of $ w $ in $ \Gb $ is $ \Hb $, we have     $  \kappa   \in H $.  Therefore $   \kappa    \in H \cap K $ and  the  classes of $ P $ and $ P  \kappa   $ in  $  \Sh_{\Hb}(  H  \cap   K    )(\CC) $ are  forced to be  equal.  This  implies   that     $ \iota_{H \cap   K    ,     K} $  is     injective   on   $ \CC  $-points  and is therefore  a   closed  immersion.  
\end{proof}    

\begin{remark}  While we exclusively work in $ \mathbf{Sch}_{\overline{\QQ} } $,  many  of our results   hold true over canonical models over the reflex field of $ \Hb $. For instance,  (\ref{iotaUKmap}) is defined over this field and the criteria of Lemma \ref{closedimmersion} applies if $ \iota_{H \cap K, K} $ is viewed over this number  field.    
\end{remark} 

\subsection{Connected components}   \label{conn}  Recall that we are viewing all Shimura varieties  in   $ \mathbf{Sch}_{\overline{\QQ}}  $.        To  simplify  the  description  of  the  connected components of the Shimura varieties   of $ \Hb $, we  assume for the rest of this note   that    the derived group $ \mathbf{H}^{\mathrm{der}} $ is simply  connected.   Let $ \mathbf{T} = \mathbf{H} / \mathbf{H}^{\mathrm{der}} $ and  $ \nu \colon \mathbf{H} \to \mathbf{T} $  denote    the natural map. Let $ \Tb(\RR)^{\dagger}  \subset \mathbf{T}   (   \RR    )      $ denote the image of the real points of center of $ \mathbf{H} $ in the real points of $ \mathbf{T} $  and  let   $  \Tb (  \QQ  )   ^  {  \dagger}  \colonequals      \Tb ( \QQ)  \cap  \Tb ( \RR ) ^ { \dagger} $.    Let $ U  $   be a   neat    compact open subgroup of  $  H   $.   Then $ \nu(U) $ is a compact open subgroup of $ \Tb(\Ab_{f})   $   by  \cite[Lemma 5.21]{Milne} and  neat by \cite[Corollary 17.3]{Borel}.     The set of    connected components of $ \mathrm{Sh}_{\Hb}(U) $ can be described  via  an   explicit    bijection (of sets)   
\begin{equation}   \label{pi0}    \pi_{0} (\Sh_{\Hb}(U))(\CC)      \xrightarrow{\sim}   \Tb(\QQ)^{\dagger}  \backslash \Tb(\Ab_{f} )  /  \nu(U )  
\end{equation} 
once a connected component $ X_{\Hb}^{+} \subset X_{\Hb} $ in the analytic topology of $ X_{\Hb} $ is fixed (\cite[Theorem 5.17]{Milne}),   which we  do  in  what   follows. Since $ \pi_{0}( \Sh_{\Hb}(U)) $ is a finite \'{e}tale scheme and we are   working     over $ \overline{\QQ}$,  there is no harm in identifying  it  with   $$ \pi_{0}(\Sh_{\Hb}(U))(\overline{\QQ}) \xrightarrow{\sim} \pi_{0}(\Sh_{\Hb}(U))(\CC)  ,   $$  and we will merely view these as sets without a  scheme  structure.   We   however   do    view  $ \pi_{0}( \Sh_{\Hb}(U)) $ as an abelian group  by transport of   structure via  (\ref{pi0}), and will denote this abelian  group by $ \pi_{0, U} $ for the rest of this note for  simplicity.      
\begin{definition}   \label{Ycirc}  By $ Y_{U}^{\circ} $, we denote  the   connected component of $ \mathrm{Sh}_{\Hb}(U)$ over the  identity element  $ [  1   ] 
\in \pi_{0, U}    $ which we consider  as an object in $ \mathbf{Sch}_{\overline{\QQ}} $.     If $ V $ is a compact open subgroup of $ U $, we denote by $ \varphi_{V,U} \colon \pi_{0, V} \to  \pi_{0 ,  U  } $ the  quotient  homomorphism.  
\end{definition}  

For convenience, we will write $ Y^{\circ} $ for $ Y_{U}^{\circ}    $  when  the  level is  apparent  from  context.  
We observe that the connected components of $ \Sh_{\Hb}(U) $ are equidimensional of  dimension $ \dim \,  \Sh_{\Hb}(U) $. It is easy to see that the twisting  isomorphism $ [h] \colon \Sh_{\Hb}(U)  \to  \Sh_{\Hb}(h^{-1} U h  ) $ sends $ Y_{U}^{\circ} $ to the 
connected component of  $ \Sh_{\Hb}(h^{-1} U  h ) $    indexed by the class of $ \nu(h) $ in $   \pi_{0, h^{-1} U h } =   \pi_{0,U}      $. 
\begin{remark} We note in passing that each component in $ \pi_{0, U}$  is    defined over a finite abelian extension of the reflex field, which  is determined by   an  explicit       Shimura-Deligne reciprocity law, and the eventual     expression we derive for the Hecke action in Question  \ref{mainquestion} is defined over the compositum of the fields of definition of cycles involved.   
This is  however not relevant to the issues  considered in this  note.       
\end{remark}  

\section{Abstract pushforwards}

\label{pushforwardsofcycles}    
In this section, we  carefully define the cycles $ z_{K}(-) $ from the introduction and consider their analogs    in the more general setting of abstract pushforwards, which model the behaviour of any suitable cohomology theory for Shimura varieties.     

\subsection{Special cycles}   \label{specialcycles}    
We maintain  the  notations and assumptions  of  \S  \ref{recollections}.  For each integer $ m \geq 0 $ and neat compact open subgroup  $ K $ of $ G $, we denote by $ \mathcal{C}^{m}(K) $ the free abelian group on the set of  codimension $ m $ closed integral $\overline{\QQ}$-subschemes of  $\mathrm{Sh}_{\Gb}(K) $.     
We refer to elements of $ \mathcal{C}^{m}(K)$ as \emph{algebraic cycles of codimension $ m $ on $ \mathrm{Sh}_{\Gb}(K)$}.  Recall  (\ref{codim}) that $ n $ denotes the codimension of Shimura varieties  of $ \Hb $ in those of $ \Gb   $.  Recall  also    that $ \iota_{H \cap K , K } $ (\ref{iotaUKmap}) is finite and therefore proper. In particular, $ \iota_{H \cap K , K } $ is a  closed map. 
\begin{definition} \label{zkg} For  $ g \in G $,  we denote by  $ z _{K}(g) \in  \mathcal{C}^{n}(K) $ the algebraic cycle  on 
$   \mathrm{Sh}_{\Gb}(K)  $    given by the fundamental cycle  of the reduced closed  subscheme  of  $ \Sh_{\Gb}(K)  $    whose underlying topological space is given by the (necessarily    irreducible and closed) image of   $ Y_{H \cap  g K g ^{-1} }^{\circ} $ under the  morphism     
\begin{equation}   \label{Cornuttwist}
\Sh _{\Hb}( H \cap  g K  g ^{-1} )  \xrightarrow { \iota    }        \Sh_{\Gb}( g K  g    ^{-1})  \xrightarrow{ [g] }   \Sh_{\Gb}(K) .  
\end{equation}  We   refer to these as  \emph{irreducible special cycles}. 
We denote by $ \mathcal{Z}_{K} $ the $ \ZZ $-submodule of $ \mathcal{C}^{n}    (  K   )      $ spanned by 
$ z_{K}(g) $ for $  g  \in G $.    
\end{definition}  \label{equivalentGktau}  One may equivalently describe $ z_{K}(g) $ as the   fundamental     cycle associated  with   the integral   $ \overline{\QQ}   $-subscheme    of  $   \Sh_{\Gb}(K) $ whose $ \CC $-points are the image of   $ X_{\Hb}^{+} \times   g    $ in $ \Sh_{\Gb}(K)(\CC) $. Then it is easily seen that  the  twisting isomorphism  $[h]_{K, h^{-1}K  h} $ for any $ h \in G $ sends $z_{K}(g) $ to $ z_{h^{-1}Kh}( g  h ) $. 

It is    clear    that $ z_{K}(g) $ are not uniquely labeled   by    $ g  \in G  $. For instance, $ z_{K}(  g  )     = z_{K}(   g    \kappa ) $ for any $  \kappa     \in K $. Let $ \mathbf{H}(\RR)_{+} $ denote the stabilizer of $ X_{\Hb}^{+} $ in $ \Hb(\RR ) $   and  set  $$ \mathbf{H}(\QQ)_{+} \colonequals \mathbf{H}(\RR)_{+} \cap  \mathbf{H}(\QQ)  .   $$    From the explicit description of $ \CC$-points underlying the scheme of  $ z_{K}(g) $, it is also clear that $ z_{K}(g) = z_{K}( h g  ) $ for any $ h \in \Hb(\QQ)_{+} $. Thus the cycles $ z_{K}(g)    $ can be  indexed by $ \Hb(\QQ)_{+} \backslash G / K $ and 
we obtain a $ \ZZ $-linear surjection 
\begin{equation}   \label{Phi} \Phi \colon \mathcal{S}_{\ZZ} ( \Hb(\QQ)_{+} \backslash G / K )  \to  \mathcal{Z}_{K}
\end{equation} 
where $ \mathcal{S}_{\ZZ}(X) $ for a set $ X $ denotes the free $ \ZZ $-module on elements of $ X $.  Since $ G /K $ is discrete, we are free to replace $ \Hb(\QQ)_{+} $ by  its  topological     closure inside     $ G $ in  (\ref{Phi}). 
If $ \Hb^{\mathrm{der}}(\RR) $ is non-compact and contained in $ \mathbf{H}(\RR)_{+}$, then this closure contains $  \Hb^{\mathrm{der}}(\Ab_{f}) $ by strong  approximation.  In this case, one obtains a large set of relations among the labels $ g $ for $ z_{K}(g) $,  even locally at each prime. 
\begin{remark} A detailed study of such cycles for certain orthogonal Shimura varieties     is done in   \cite[\S 5.14]{Cornut}.  In Proposition 5.1 of \emph{loc.cit.}, it is shown using the  Baire category theorem  that a  map closely related to $ \Phi $ is in fact a bijection.  
A  similar fact is established in \cite[\S 2.3]{Jetchev}. 
See Remark \ref{motivation} for an alternative approach to Question \ref{mainquestion} when such a description is  available.       In general, it seems unclear  what the full set of relations among the labels of these cycles are.         
\end{remark}

\subsection{RIC functors}   \label{RICfuncsec}      To address Question \ref{mainquestion} at a greater level of generality and to streamline certain arguments,  we will require some notions  of  functors on compact open subgroups of locally profinite groups  from  
\cite[\S 2]{AES}.    These notions roughly coincide with those introduced in  \cite[\S 2]{Anticyclo}, 
except that the monoid ``$ \Sigma$"   is taken to be the full group   and axiom `(T2)' of \emph{loc.cit}.\ is relaxed slightly. Note also that the terminology in the former has been slightly updated to acknowledge the prior and widespread use of these functors (and   their   variations)      in the literature (\cite{Lewis}, \cite{Bley},  \cite{BARWICK}).   We briefly recall these notions   below  and   explain  their  relevance  in  our  context.  

Fix $ \Upsilon_{G} $ any non-empty    collection of neat compact open subgroups of $ G  =   \Gb(\Ab_{f})    $ which is closed under intersections  and  conjugations by $ G $, where  any two   subgroups have the same intersection with $ \mathbf{Z}_{\Gb}(\QQ) $ and such that for any $ K, L \in \Upsilon_{G} $, there exists a third subgroup in $    \Upsilon_{G} $  that is contained in $ K \cap L $ and  normal in  $    K  $.    
It is elementary to see that such collections always exist (\cite[Lemma 2.1.1]{AES}). We let $ \mathcal{P}(G, \Upsilon_{G}) $ denote   the     small category whose  underlying set   is     $ \Upsilon_{G} $  and whose   morphisms are given by  $ \mathrm{Hom}_{\mathcal{P}(G, \Upsilon_{G})}  
( L, K ) =  \left \{ g \in G \, | \,  g^{-1} L g  \subset  K   \right \}  $ for all $L$,  $ K \in \Upsilon  $. A morphism will usually be  written as either $ (L \xrightarrow{g} K) $ or $[g]_{L,K} $ where in the latter notation,  we omit the subscripts if they are understood from context\footnote{Apriori, the notation $[g]$ here conflicts with (\ref{twistingiso}). However, $ \mathcal{P}(G, \Upsilon_{G}) $ can be canonically identified with the corresponding system of Shimura varieties and their degeneracy maps. This justifies  our    abuse of notation.}. 
Composition in $ \mathcal{P}(G, \Upsilon_{G}) $ is given by $ [g] \circ [h] = [hg] $. If $  1  =  1 _ {  G  }    $ denotes the identity of $ G $, the morphism $ [1]_{L,K} $ is also denoted by $ \pr_{L,K} $. Fix $  R $  a commutative ring  with  identity.        
   \begin{definition}    
\label{RICdefinition}      
A \emph{RIC functor $ M $ on $ (G, \Upsilon_{G} ) $ valued in    $ R $-\text{Mod}}  is a pair of covariant functors $  M ^ { * } :  \mathcal{P}(G, \Upsilon_{G}  ) ^ { \text{op} }  \to   R\text{-Mod} $ and $       M  _{  * } :  \mathcal{P}(G, \Upsilon_{G} )  \to  R\text{-Mod} $
satisfying   the   following   three   conditions:         
\begin{itemize}  \setlength\itemsep{0.3em}
\item [(C1)] $ M^{*}(K) = M_{*}(K)$ for all $ K  \in   \Upsilon   _   {  G  }   $. This common $ R $-module is denoted by $ M(K) $.
\item [(C2)] For all $ K \in \Upsilon _ { G }   $ and  $ g \in G $, $$   (gKg^{-1} \xrightarrow{g} K)^{*} =  ( K \xrightarrow{g^{-1} } gKg^{-1} )_{*}     \in  \mathrm{Hom}_{R\text{-Mod}}(M(K),M(gKg^{-1})) .  $$       
Here for a morphism $ \phi  \in   \mathcal{P} (   G   ,   \Upsilon_{G}  )  $,  we denote $ \phi_{*}  : = M_{*}(\phi) $ and $ \phi^{*}  : = M^{*}(\phi) $. 
\item [(C3)] $  [ \gamma ]_{K,K,*} : M(K) \to M(K) $ is identity map  for all $ K \in \Upsilon  _ { G   }      $ and $ \gamma \in K $.
\end{itemize}    
The pair of functors above will be denoted simply  as   $ M : \mathcal{P}(G, \Upsilon_{G} ) \to R\text{-Mod}$.   
We say  that $ M $ is    
\begin{itemize}   \setlength  \itemsep{0.1em}      
\item [(G)] $ \emph{Galois} $ if for all $ L , K \in \Upsilon  _ {  G }  $ such that  $ L \triangleleft K $,    $ \pr_{L,K}^{*}$ injects $M(K)$  onto $M(L)^{K/L}$.         
\item [(Co)]  \emph{cohomological} if for all $ L , K \in  \Upsilon _ { G }  $  with   $  L   \subset K $,  $  \pr_{L, K,   *} \circ \pr_{L, K}^{*}  = [K : L ] \cdot \mathrm{id}$.    
\item [(M)] \emph{Mackey}   
if for all $K, L, L' \in \Upsilon  _ {  G   } $ with $L ,L' \subset K$, we have a commutative diagram 
\begin{equation}   \label{Mackeydiagram}    
 \begin{tikzcd}[column   sep = large]
 \bigoplus_{\delta} M(L_{\delta}' ) \arrow[r,"{\sum [\delta]_{*}}"] & M(L) \\
 M(L') \arrow[r,"{\pr_{*}}",swap] \arrow[u,"{\bigoplus \pr_{*}}"] & M(K) \arrow[u,"{\pr^{*}}",swap] 
\end{tikzcd}
\end{equation}   
where the direct sum is over $  \delta \in  L'  \backslash K / L$ and $L_{\delta}' : =  L'  \cap  \delta L   \delta^{-1}     \in    \Upsilon_{G}        $. This  condition is independent of the choice of coset representatives.  
\end{itemize}  
When (Co) and  (M) are both satisfied, we say that $ M $ is \emph{CoMack}  for  brevity.      
\end{definition} 
\begin{remark}  The acronym ‘RIC’ stands for \emph{restriction, induction, conjugation} and may be pronounced ‘Ric’.  
To any smooth left $G$-representation $ \pi$, one can attach a RIC functor $M_{\pi} $ by setting $ M_{\pi}(K) = \pi^{K} $ (the $K $-invariants of $ \pi $)     for all $ K \in \Upsilon_{G}    $. For $ L \subset K $, the map $  \pr^{*} :   M_{\pi}(K) \to M_{\pi}(L) $ is the inclusion $ \pi^{K} \hookrightarrow  \pi^{L} $, the map    $   \pr_{*}   :     M_{\pi}(L) \to M_{\pi}(K) $ is the trace $  \sum_{\gamma \in K / L} \gamma \cdot (-) $ and        
$[g]^{*} : M(K) \to M(gKg^{-1}) $ is given by the action of $ g $ on $ \pi $.   Then $M_{\pi} $ is Galois, cohomological and Mackey.  
However, not all RIC functors arise in this manner, e.g.,  the cohomology   of     Shimura varieties with $\ZZ_{p}$-coefficients   generally  fails to be Galois.   \label{RICremark}    
\end{remark}

\begin{remark}  The role of $ \Upsilon_{G} $  in  the  above    is primarily to restrict any given  cohomology theory for  $ \Sh_{\Gb} $ to those levels where pushforwards and pullbacks behave in the expected way (i.e., the cohomology  over  varying  levels       constitutes a CoMack  functor).  This is  especially relevant in  the case of Hilbert modular varieties,  for which axiom (SV5) of  \cite{Milne} fails.    We observe that some claims in  \cite[\S 6.1]{Grossi}  do  not  hold  for  arbitrary neat 
 levels for   precisely  this reason, but this can be easily remedied by restricting to  levels in a collection $ \Upsilon_{G} $     as above  and appealing to \cite[Lemma 2.7.1]{AES}.       
\end{remark}

We   can     similarly define these notions for $ H =  \Hb(\Ab_{f}) $.  To speak  of     maps between functors on $ H $ and $ G $, we assume that the collection     
\begin{equation}    \label{upsilonH}   \Upsilon_{H} \colonequals \left \{ H \cap K \, | \, K \in \Upsilon_{\Gb}  \right   \}   
\end{equation} is such that    any two elements in $ \Upsilon_{H} $ have the same  intersection with $ \mathbf{Z}_{\Hb}(\QQ) $.    This condition is automatic if either $ \mathbf{Z}_{\Hb} \subset \mathbf{Z}_{\Gb} $ or $ (\Hb, X_{\Hb})$ satisfies (SV5), but is    otherwise a   running  assumption  in this note.      The other analogous conditions for $ \Upsilon_{H} $ however always hold.

For convenience, we will call   a pair $ (U, K ) \in \Upsilon_{H} \times \Upsilon_{G} $ \emph{compatible} if $ U $ is contained in $  K $.  
We  fix  for    the rest of this subsection    RIC functors    $   N  : \mathcal{P}(H,  \Upsilon_{H}) \to R\text{-Mod}$ and  $ M : \mathcal{P}(G,  \Upsilon_{G}) \to R\text{-Mod}  $.   

\begin{definition}   \label{pushforward}   
A  \emph{pushforward}  $  \iota_{*} : N \to M $ is a collection of   $  R  $-module   homomorphisms $ \iota_{U, K, *} : N(U) \to M(K)  $ indexed by compatible pairs $ (U, K) $  such that if $ (V, L) \in \Upsilon_{H} \times \Upsilon_{G}$ is another such pair and $ h \in H $ labels morphisms $ \phi = [h]_{V, U}  \in \mathcal{P}(H, \Upsilon_{H}) $ and $ \psi = [ h ]_{L, K}  \in \mathcal{P}(G, \Upsilon_{G}) $, we have $ \psi_{*} \circ \iota_{V, L, *} =  \iota_{U, K, *} \circ  \phi_{*}  $.
\end{definition}

\begin{definition}   \label{mixedHeckedef}        Suppose $ \iota_{*} : N \to M $ is a pushforward.
For any $ U \in \Upsilon_{H}$, $ K \in \Upsilon_{G} $ and $ \sigma \in G $, the  \emph{mixed Hecke correspondence}  $ [U \sigma K]_{*}  : N(U)  \to  M(K)    $ is defined as  the composition  $$ N (U)  \xrightarrow {  \pr ^ { * }   } N ( U \cap \sigma K \sigma ^ {  - 1 } )   \xrightarrow{\iota_{*}}  M    ( \sigma K \sigma ^{-1} ) \xrightarrow{ [\sigma]_{*} }   M  
  (K) . $$       
The  \emph{degree} of $ [U    \sigma    K]_{*} $ is defined to be the index $  [H \cap  \sigma   K  
 \sigma ^{-1} : U \cap  \sigma K   \sigma  ^{-1}  ]    $  and  denoted  by $ \deg   \,     [U  \sigma    K]_{*} $.           We say that the pushforward $ \iota_{*} $ is \emph{Mackey}  if for all $ V \in \Upsilon_{H} $ and $ L,  K \in \Upsilon_{G} $ satisfying $ V, L \subset K $, we have $\pr_{L, K}^{*}  \circ  \iota_{V, K,  *}   =  \sum \nolimits    _{\gamma}      [V \gamma L]_{*} $
where  $ \gamma $ runs over $ V \backslash K / L   $.  \end{definition}

\begin{note}   \label{Heckedef}     Suppose $ \Hb =  \Gb $ in the above situation. Then there is a  natural  pushforward $  M \to M $ given by   $ \phi_{*} $ for various    $ \phi \in \mathcal{P}(G, \Upsilon_{G}) $,  
and it is easy to see that  $ M $ is Mackey if and only if this natural pushforward is.    Given  $ K, K' \in \Upsilon_{G} $ and    $ \sigma \in G $,  we refer to $ [K \sigma K']_{*} : M(K) \to M(K') $ defined with respect this pushforward   as a \emph{covariant Hecke correspondence}  and     $ [K' \sigma K] : = [K \sigma^{-1}  K']_{*}  :  M(K) \to M(K')   $  as  a   \emph{contravariant   Hecke     correspondence}.  Then  our  notion of degree applied to these
recover the usual  notion of degrees of Hecke     correspondences. 
\end{note}

\subsection{Pushforwards of  cycles}      
\label{CoMAck} 
We now specialize the notions of \S \ref{RICfuncsec}     to the situation of interest.  Recall that for $ L \in \Upsilon_{G }$, $ \mathcal{C}^{n}(L) $ denotes the  abelian group of algebraic cycles of codimension $ n $ on $  \Sh_{\Gb}(L)   \in   \mathbf{Sch}_{\overline{\QQ}}$. Since the maps in the  inverse   system of Shimura varieties for $ \Gb   $     are  all  finite \'{e}tale, the functoriality results established   in  
\cite[\href{https://stacks.math.columbia.edu/tag/02RD}{Lemma 02RD},    \href{https://stacks.math.columbia.edu/tag/02R5}{Lemma 02R5}]{stacks-project} apply and we see that 
the groups    
   $\mathcal{C}^{n}(L) $ for varying $ L $ assemble into a RIC functor  
   \begin{equation}   \label{RICC} \mathcal{C}^{n}  \colon   \mathcal{P}(G, \Upsilon_{G}) \to  \ZZ \text{-Mod}  . 
   \end{equation}    
For $ V \in  \Upsilon_{H} $, we let $ N(V) $ denote the free abelian group on  the  fundamental  cycles  indexed  by   $ \pi_{0,  V }  $ from  \S  \ref{conn}. For the same reasons, $ N(V) $ for varying $ V $ assemble into   a RIC functor   
\begin{equation}   \label{RICN}  N  \colon   \mathcal{P}(H, \Upsilon_{H})  \to  \ZZ\text{-Mod}  .   
\end{equation}    
Moreover, there exists   a pushforward $ \mathrm{cyc}_{*}   \colon    N \to   \mathcal{C}^{n}  $ in the sense of Definition \ref{pushforward}   given by proper pushforward of algebraic cycles,  again by  \cite[\href{https://stacks.math.columbia.edu/tag/02R5}{Lemma 02R5}]{stacks-project}.      
\begin{lemma}   \label{allgoodlemma} 
The functor $ N $  and    $ \mathcal{C}^{n} $ are CoMack and Galois, and the pushforward $  \mathrm{cyc}_{*}        $ is  Mackey.       
\end{lemma}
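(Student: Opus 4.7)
The plan is to reduce all three claims to the standard formalism of flat pullback and proper pushforward of cycles under finite \'{e}tale and finite proper morphisms, combined with the explicit geometry of the degeneracy and twisting maps between Shimura varieties. The starting point is that for any $L \subset K$ in $\Upsilon_{G}$ (resp.\ in $\Upsilon_{H}$), the degeneracy map $\pr_{L, K}$ is finite \'{e}tale of degree $[K:L]$, by \cite[Lemma 2.7.1]{AES} together with the defining condition (\ref{dagger}) imposed on these collections.

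For $\mathcal{C}^{n}$ and $N$, property (Co) amounts to the identity $f_{*} f^{*} = d \cdot \mathrm{id}$ for any finite \'{e}tale $f$ of constant degree $d$, which is immediate from the definitions of flat pullback and proper pushforward of cycles. Property (G) is \'{e}tale descent for cycles: when $L \triangleleft K$, the map $\pr_{L, K}$ is a Galois \'{e}tale cover with group $K/L$, so $\pr_{L, K}^{*}$ identifies $\mathcal{C}^{n}(K)$ with $\mathcal{C}^{n}(L)^{K/L}$, and similarly for $N$ on each connected component. For property (M), I would invoke the fiber-product decomposition
\[
\Sh_{\Gb}(L') \times_{\Sh_{\Gb}(K)} \Sh_{\Gb}(L) \simeq \coprod_{\delta \in L' \backslash K / L} \Sh_{\Gb}(L' \cap \delta L \delta^{-1}),
\]
under which the first projection becomes a degeneracy map while the second becomes a degeneracy map composed with the twisting isomorphism $[\delta]$. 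Base change for flat pullback and proper pushforward along this Cartesian square then produces the diagram (\ref{Mackeydiagram}) term by term. The same argument applies to $N$.

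For the Mackey property of $\mathrm{cyc}_{*}$, given $V \in \Upsilon_{H}$ and $L \subset K$ in $\Upsilon_{G}$, I would use the analogous decomposition
\[
\Sh_{\Hb}(V) \times_{\Sh_{\Gb}(K)} \Sh_{\Gb}(L) \simeq \coprod_{\gamma \in V \backslash K / L} \Sh_{\Hb}(V \cap \gamma L \gamma^{-1}),
\]
which identifies each factor with a finite \'{e}tale cover of $\Sh_{\Hb}(V)$ whose composition with $\iota$ and the relevant twist recovers the mixed Hecke term $[V \gamma L]_{*}$. Since $\iota_{V, K}$ factors as a closed immersion followed by a finite \'{e}tale surjection (cf.\ \S \ref{shimurasetup}), base change for proper pushforward of cycles applies to each component of this decomposition, yielding the required identity $\pr_{L, K}^{*} \circ \iota_{V, K, *} = \sum_{\gamma} [V \gamma L]_{*}$.

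The main obstacle is the bookkeeping: one must check that in each fiber-product decomposition above, the two projections are correctly matched with degeneracy maps and twisting isomorphisms (up to conjugation of levels), and that the indexing by double cosets matches the sum appearing in the Mackey property and in the definitions of the covariant and mixed Hecke correspondences. Once these identifications are pinned down, the lemma follows from routine naturality of flat pullback and proper pushforward along Cartesian squares with a finite \'{e}tale side.
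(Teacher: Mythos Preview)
Your approach matches the paper's almost exactly: the Mackey properties for $\mathcal{C}^{n}$, $N$, and $\mathrm{cyc}_{*}$ are all deduced from the fiber-product decompositions you describe (the paper cites \cite[Corollary 2.7.3, Lemma 2.7.4]{AES} and the analog of \cite[Proposition 4.12]{Anticyclo} for precisely these Cartesian squares, together with \cite[\href{https://stacks.math.columbia.edu/tag/02RG}{02RG}]{stacks-project} for base change of cycles), and (Co) is \cite[\href{https://stacks.math.columbia.edu/tag/02RH}{02RH}]{stacks-project}.

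The one place where your proposal is thinner than the paper is property (G). You write that ``$\pr_{L,K}^{*}$ identifies $\mathcal{C}^{n}(K)$ with $\mathcal{C}^{n}(L)^{K/L}$'' as if this were a standard citation, but the paper singles this step out as the only non-formal one: injectivity is immediate from (Co), and the image obviously lands in the invariants, but \emph{surjectivity} of $\pr^{*}$ onto $\mathcal{C}^{n}(L)^{K/L}$ with \emph{integral} coefficients is not something for which the authors could locate a satisfactory reference. They therefore supply a direct argument: given an integral cycle $Z_{0}$ on $\Sh_{\Gb}(L)$, form the reduced union $Z$ of its $K/L$-conjugates, descend its ideal sheaf via faithfully flat descent to an ideal sheaf on $\Sh_{\Gb}(K)$ cutting out a closed subscheme $W$, and check $f^{*}[W]=[Z]$ using \cite[Lemma 1.7.1]{Fulton}. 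Your proposal should either supply this argument or point to a reference (the paper's Remark after the proof mentions \cite{Anschutz} and \cite[\S 7.6.2]{Poonen} as possibilities); over $\QQ$ this is \cite[Corollary 2.1.12]{AES}, but over $\ZZ$ it requires a word.

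A minor point: for the Mackey property of $\mathrm{cyc}_{*}$, the fiber-product decomposition you wrote down is only Cartesian because of the running assumption on $\Upsilon_{H}$ (that all its members meet $\mathbf{Z}_{\Hb}(\QQ)$ in the same subgroup); the paper flags this explicitly.
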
    
\begin{proof} That $ N $ and $ M $ are Mackey follows by  
 applying  \cite[\href{https://stacks.math.columbia.edu/tag/02RG}{Lemma 02RG}]{stacks-project} to the diagram in  \cite[Corollary 2.7.3]{AES}, which is Cartesian in  $ \mathbf{Sch} _{\overline{\QQ}} $ by Lemma 2.7.4 in  \emph{loc.cit}.     That $ \mathrm{cyc}  _  { *  }    $ is Mackey follows by similar considerations and our assumptions on $ \Upsilon_{H} $ (which guarantee  that the analogue of the second commutative diagram in the  proof  of  \cite[Proposition 4.12]{Anticyclo} in our context is also Cartesian).  That $ N$ and $ \mathcal{C}^{n} $ are  cohomological  follows  by    \cite[\href{https://stacks.math.columbia.edu/tag/02RH}{Lemma 02RH}]{stacks-project}.  So   it  only  remains to  see that these   functors  are    
 Galois. 
 Since $ \mathcal{C}^{n}$ specializes to $ N $  when  $  \Hb  =    \Gb $, it suffices to focus on $   \mathcal{C}^{n}  $.     So suppose that $ L , K  \in \Upsilon_{G} $ with $ L \triangleleft  K  $.  Since $ \mathcal{C}^{n} $ is cohomological, $$  \pr_{L, K} ^{* } : \mathcal{C}^{n}(K)  \to \mathcal{C}^{n} (L) $$ is necessarily injective (as its post-composition with $ \pr_{L,K}^{*} $ is multiplication by $ [K : L]$). It is also clear that the image of $ \pr_{L, K}^{*} $  lands in the $ K / L $ invariants of $ \mathcal{C}^{n}(L) $.  Finally,  the   surjectivity  of $ \pr_{L, K }^{*} $  
 follows by Galois  descent  for  algebraic  cycles  with  integral  coefficients.   Since we are unable to find a satisfactory reference for this fact, we provide a   full  proof below.       
 
Let  us  momentarily denote $  X := \Sh_{\Gb}(L)$, $ Y :=  \Sh_{\Gb} ( K) $, $ f  : X \to Y $ the degeneracy map induced by the inclusion $ L \hookrightarrow K   $ and $ \Gamma : = K / L $.  Then $ f $ is a Galois  covering  with  Galois group $  \Gamma $ in the sense of \cite[\S 6.2, Example B]{Bosch}, i.e., the induced map $ X \times \Gamma \to X \times_{Y} X $ given by $ (x, \gamma) \mapsto (x, x \gamma ) $ is an isomorphism (where 
$ \Gamma $  is      viewed    as a constant  \'{e}tale  group  scheme).    Let $ Z_{0}  $ be a closed integral  subscheme of $ X $ of  codimension $ n  $, and $ Z $ be the scheme theoretic union of the   distinct  $\Gamma$-conjugates of $ Z_{0} $.  
Then $ Z $ is a   closed and reduced subscheme of $ X $ and its  ideal  sheaf   $ \mathcal{I} $ is $ \Gamma $-invariant under the induced action $ \Gamma \times  \mathcal{O}_{X} \to  \mathcal{O}_{X} $. It 
therefore descends to a quasi-coherent sheaf of ideals  $ \mathcal{J} $ for $ \mathcal{O}_{Y} $  by \cite[\S6, Theorem 4]{Bosch}. If $ W $ denotes the  closed  subscheme of $ Y $ corresponding to $ \mathcal{J} $, we have $ f^{-1}(W) = Z $   scheme theoretically by  construction.     Then $ f^{*} $ sends  the fundamental cycle $ [W] $ to the fundamental cycle $[Z] $  by \cite[Lemma 1.7.1]{Fulton}\footnote{The scheme $ W $ can be shown to be reduced and  irreducible, but we do not need this.}. 
 Since cycles of the form  $ [Z] $  span   $    \mathcal{C}^{n}(L)^{\Gamma} $, the   surjectivity  of   $    f^{*}  =   \pr_{L, K }^{*}  $ follows.    
\end{proof}    

\begin{remark} The Galois descent for   
cycles with integral coefficients  presumably     
follows  from  
\cite{Anschutz}.   See    also  
\cite[\S 7.6.2]{Poonen}  for  an argument similar to  ours in the context of fields. For rational coefficients, Galois descent    follows by \cite[Corollary 2.1.12]{AES}
     
\end{remark}   
One may also make similar considerations when  $ \mathcal{C}^{n} $ is replaced  by    Chow groups,  or   with  the  $ p $-adic    étale cohomology with coefficients as in  \cite[\S 5.1]{Anticyclo}.  The conclusions of Lemma \ref{allgoodlemma} remain valid, except that the target functor is no longer necessarily Galois.    However, it still makes sense to   pose  the analog of Question \ref{mainquestion} in these settings. To make our results applicable to such situations, we will  work with an arbitrary CoMack functor on $ (G, \Upsilon_{G}) $  and an arbitrary  Mackey  pushforward to this CoMack  functor.    The situation of Question \ref{mainquestion} can be recovered  by  specializing  to    $ \mathcal{C}^{n} $ and $ \mathrm{cyc}_{*}     $.                

To this end, let $ R $ be any commutative ring with identity and   $  N_{R} \colonequals N \otimes_{\ZZ}  R $ be the RIC functor on $(H, \Upsilon_{H}) $ obtained by base change, i.e., $N_{R}(V) := N(V) \otimes_{\ZZ} R $ for all $ V \in \Upsilon_{H} $.  
For any $ V \in \Upsilon_{H} $, denote by $ [Y^{\circ}]  \in N_{R}(V) $ the   fundamental    cycle associated with $ Y^{\circ}   =   Y_{V}^{\circ}     $  introduced in  Definition  \ref{Ycirc}.   

\begin{definition}   \label{ykg}    Let $ M_{R} \colon  \mathcal{P}(G, \Upsilon_{G})     \to R\text{-Mod} $ be any RIC  functor and $ \iota_{*} \colon N_{R}  \to M_{R} $ be  any  Mackey pushforward. 
For $  g  \in G $ and $ K \in \Upsilon_{G} $,   we  define $ y_{K}(g ) \in  M_{R}(K) $ to be the image of $ [Y^{\circ}]    \in  N  (  H \cap g K g ^{-1}    )     $ under the composition   
\begin{equation}   \label{pushform}    N_{R}(H  \cap  g K  g  ^{-1}     )  \xrightarrow { \iota_{*} }  M_{R}  (  g  K  g  ^{-1} )  \xrightarrow {  [g ]_{*}  } M_{R}    (K)    
\end{equation} Similarly, we define $ x_{K}(g ) \in M_{R}(K) $ to be the image of the fundamental cycle $ [\mathrm{Sh}_{\Hb}(H\cap g K g ^{-1})] \in N_{R}(H \cap   g    K  g ^{-1} )  $   under   (\ref{pushform}).  
\end{definition} 
\begin{remark}   \label{xykgrel}     Since the fundamental cycle of $  \Sh_{\Hb} ( V )  $ is a formal  sum of cycles indexed by $ \pi_{0, V} $ for any $ V \in \Upsilon_{H }$, it is not hard to see that each $ x_{K}(g) $ is  a formal sum of various $ y_{K}(-) $.  Indeed, say $ Z  $ is a (geometric) connected component of $ \Sh_{\Hb}(H\cap gK g^{-1} )$ and  $ h \in H $ is such that $ Z $ is indexed by $ [\nu(h)] \in \pi_{0, H \cap g K g^{-1} }  $. Then the image of $ [Z] \in N ( H \cap g K g^{-1}  ) $ under  (\ref{pushform}) is equal to $ y_{K}(hg) $. Thus $ x_{K}(g) $ equals the sum of $ y_{K}(hg) $ as $ h $ varies over representatives for $ \pi_{0, H  \cap   g K  g^{-1 } }  $.    
\end{remark}    
As already noted,  we  consider   a   general  
ring $ R $ and  a    CoMack functor  $ M_{R} $ in order  to  capture    the  various   pushforward  constructions and cohomology theories one  may  consider.   
For instance,   one  may  take  $ R = \ZZ_{p} $ and 
$$ M_{\ZZ_{p}}(L) \colonequals  \mathrm{H}^{2n}_{\et} \big ( \mathrm{Sh}_{\Gb}(L)   
 ,  \ZZ_{p} (n)  \big     ) $$ 
 where the right hand side  denotes  Jannsen's  continuous  \'{e}tale cohomology \cite{Jannsen1988}.    
Indeed, $ N_{\ZZ_{p}}(V) $ can  be identified with $ \mathrm{H}^{0}_{\et}(\mathrm{Sh}_{\Hb}(V) , \ZZ_{p} ) $ and the pushforward is obtained via the Gysin triangle    in Ekedahl's  ``derived"      category of  constructible $ \ZZ_{p}$-sheaves as in \cite[Appendix A]{Anticyclo}.    The relevant properties of this  construction can be established  as in  \cite[\S 4]{Anticyclo}, and the failure of (SV5) can be  handled   by  \cite[Corollary 2.7.3]{AES}.  Note however that  $ M_{\ZZ_{p}} $ in this case     is   \emph{not}     necessarily  Galois.   

\begin{remark}  The inductive limit $  
\varinjlim_{L} M_{R} (L) $ for $ L \in \Upsilon_{G}  $  over restriction maps is naturally a smooth $ G$-representation, where an element  $ g \in G $ acts by $ [g]^{*} $.    The Mackey axiom for $ M_{R}   $ implies that   $$ [ K \sigma K ]_{*} \cdot x =    \sum   \nolimits    _ { \delta \in  K  \backslash  K \sigma K  }  \delta   ^ { -  1   }          \cdot x $$
for any $ x \in M_{R}    (K) $, where the  equality is  being viewed in the inductive limit.    This is the analog   of (\ref{CM2})  from the introduction. See \cite[Corollary 2.4.3]{AES} or \cite[Lemma 2.7(a)]{Anticyclo} for a justification.  
\end{remark}

\subsection{A comparison}   
Assume for this subsection only that $ M_{R }  =   \mathcal{C}^{n} $ (and $ R = \ZZ$). Then $ y_{K}( g   ) \in   \mathcal{C}^{n}(K)  $ (Definition \ref{ykg})  is    not necessarily equal to $ z_{K}( g )   $      
(Definition  \ref{zkg}),      but the two are  very    
closely related.     
\begin{lemma}   \label{dg}   There exists a unique positive integer $ d_{g, K }$ such that  $ y_{K}(g)  =  d_{g, K }   \cdot       z_{K}(g)  $.  Moreover,  if   the morphism  $ \iota _{ H \cap g  K  g  ,  g K g^{-1}  }  $  is a  closed  immersion, $ d _ {h g, K }  =  1   $ for all $ h \in H   $.    
\end{lemma}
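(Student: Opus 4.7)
The plan is to directly unfold both $y_K(g)$ and $z_K(g)$ using the definition of proper pushforward of cycles, and match them. Setting $V := H \cap gKg^{-1}$, I would note that the finite unramified morphism $\iota : \Sh_{\Hb}(V) \to \Sh_{\Gb}(gKg^{-1})$ restricts to a finite morphism from the closed integral subscheme $Y_V^{\circ}$ onto a closed irreducible subset of the target of dimension $\dim \Sh_{\Hb}(V)$. Endowing this image with its reduced induced scheme structure $W$ yields a finite surjective morphism $Y_V^{\circ} \to W$ of integral $\overline{\QQ}$-schemes of the same dimension. The standard formula for the proper pushforward of cycles (\cite[\S 1.4]{Fulton}) then reads
$$\iota_*[Y_V^{\circ}] = [K(Y_V^{\circ}) : K(W)] \cdot [W],$$
with coefficient a positive integer. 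Applying the isomorphism $[g]_*$ sends $[W]$ to the fundamental cycle of the reduced image of $Y_V^{\circ}$ along the composition (\ref{Cornuttwist}), which by Definition \ref{zkg} is exactly $z_K(g)$. Setting $d_{g,K} := [K(Y_V^{\circ}) : K(W)]$ thus yields the asserted identity $y_K(g) = d_{g,K} \cdot z_K(g)$. Uniqueness is automatic since $z_K(g)$, being the fundamental cycle of a single irreducible subvariety, is a $\ZZ$-basis element of $\mathcal{C}^n(K)$.

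For the second assertion, I would transport the closed immersion hypothesis across twisting isomorphisms. Since $h \in H$, one has $V' := H \cap hgK(hg)^{-1} = hVh^{-1}$, and the square
$$\begin{tikzcd}
\Sh_{\Hb}(V) \arrow[r, "\iota"] \arrow[d, "{[h^{-1}]}"'] & \Sh_{\Gb}(gKg^{-1}) \arrow[d, "{[h^{-1}]}"] \\
\Sh_{\Hb}(V') \arrow[r, "\iota"] & \Sh_{\Gb}(hgKg^{-1}h^{-1})
\end{tikzcd}$$
commutes with vertical arrows that are isomorphisms, because $\iota$ is equivariant for the right $G$-action on the Shimura tower. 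Hence the closed immersion hypothesis for the top horizontal arrow transfers to the bottom one. Restricting the bottom arrow to $Y_{V'}^{\circ}$ and factoring through its reduced image $W'$ exhibits $Y_{V'}^{\circ} \to W'$ as a surjective closed immersion of integral schemes, necessarily an isomorphism. The degree computation of the previous paragraph then forces $d_{hg,K} = 1$.

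I anticipate that the only step meriting mild care is the identification of the reduced image $W$ appearing in Definition \ref{zkg} with the scheme-theoretic image entering Fulton's pushforward formula, but this is automatic since $Y_V^{\circ}$ is reduced and the scheme-theoretic image of a finite morphism out of a reduced scheme is itself reduced, so no nilpotent decoration intervenes.
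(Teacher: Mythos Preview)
Your proof is correct and follows essentially the same route as the paper: both invoke Fulton's proper pushforward formula (\S 1.4) to identify $d_{g,K}$ as the degree of the function field extension of $Y_V^{\circ}$ over its reduced image, and both transport the closed-immersion hypothesis along the twisting isomorphism $[h]$ via the obvious commutative square. Your final caveat about scheme-theoretic versus reduced image is unnecessary, since Fulton's pushforward is defined directly in terms of the reduced structure on the set-theoretic image, so no comparison is needed.
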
 
\begin{proof}  The first part follows by \cite[\S   1.4]{Fulton}. More  precisely,  
$      d_{g ,K} $  is the degree of the field extension of the function fields that corresponds to the dominant morphism  of  integral  schemes  
\begin{equation}  \label{dom} 
Y_{H   \cap g  K  g ^{-1}    }^{\circ}  \to \iota_{g, K} 
( Y_{ H \cap gK g ^{-1}  }  
^{\circ}  )^{\mathrm{red}}       
\end{equation}   
where $ \iota_{g, K} $ denotes $ \iota _{ H \cap g K g^{-1} , g K g^{-1} }  $ 
for simplicity and the  RHS    of (\ref{dom}) denotes the  reduced  induced  closed    subscheme  of   $  \Sh_{\Gb}( g K g^{-1}  )    $ on the image of  $Y^{\circ}$ under    $ \iota_{g ,  K  } $.             
It follows that $ d_{g , K} = 1 $ if $ \iota_{ g, K} $  is a  closed  immersion.    In  that   case,   $ \iota_{hg,  K } $ is a closed immersion for any $ h \in   H    $ as well, since we have a commutative diagram  
\begin{equation*}  
\begin{tikzcd}[column sep = large]  \Sh_{\Hb}\big(H \cap hg K (hg)^{-1} \big  )    \arrow[r, "\iota"]   \arrow[d, "{[h]}"'] &   \Sh_{\Gb} \big  ( hg K (hg)^{-1}   \big  )   \arrow[d, "{[h]}"]    \\   
\Sh_{\Hb}( H \cap  g K g^{-1} )  \arrow[r, "\iota"    ]  
&    \Sh_{\Gb} (  g K g^{-1}  )  
\end{tikzcd}  
\end{equation*}  
and vertical arrows  are    isomorphisms.    
\end{proof}   
\begin{remark} Since    the  map      (\ref{dom})  
  is   independent of the class of $  g  $ in $ \Hb(\QQ)_{+} \backslash G/ K $,    
we have $ d_{ g , K } = d_{hg \kappa      ,   K      }  $  for all $ h \in \Hb(\QQ)_{+}    $ and $ \kappa \in  K  $.    We   also observe that  the  cycle $ x_{K}(g) $ from  Definition  \ref{ykg} in this case   is closely related but not exactly the same as the  ``natural cycle" defined in \cite[\S 2]{kudla},  since the former is a sum of various $ y_{K}(-) $ (see Remark \ref{xykgrel}),  whereas the latter is a sum    of   various   $ z_{K}(-)$.     
\end{remark}

\section{The formula}     In  this  section,  we   derive our    formula for   Hecke  action  on the classes introduced in Definition \ref{ykg} and the  special cycles of Definition \ref{zkg}.  We also highlight two  scenarios where the   resulting     expression  simplifies and agrees with (\ref{falsezkg}).   
\label{theformula}

\subsection{The computation}      
The notations,  conventions  and assumptions introduced   in     \S   \ref{recollections}  and \S \ref{pushforwardsofcycles}      are maintained.  In particular, the derived group of $ \Hb $ is assumed to be simply connected and  the existence of a collection $ \Upsilon_{H} $ in (\ref{upsilonH}) whose elements have the same intersection with $ \mathbf{Z}_{\Hb}(\QQ) $ is also  assumed.   Recall  also that all our  Shimura  varieties are  viewed   in    $   \mathbf{Sch}_{\overline{\QQ}  }   $.        

As  in \S  \ref{CoMAck},  we fix   for all of this section  a commutative ring $ R $  with identity, a CoMack functor $ M_{R} : \mathcal{P}(G,  \Upsilon_{G} ) \to R \text{-Mod} $ and a Mackey  pushforward  $ \iota_{*}  :   N_{R}  \to   M_{R} $ where $N_{R} $ denotes the base change of  $ N $  (\ref{RICN}) to $ R  $.  
We also fix a compact open subgroup $ K \in  \Upsilon_{G} $ and two elements $ g,  \sigma \in G $.  Our main  goal in this subsection is to compute an expression for  $$ [K \sigma K]_{*} \cdot y_{K}(g)  \in  M_{R}(K)    $$     in  terms of $ y_{K}(-) $ from   Definition  \ref{ykg}.  Here,  $  [K \sigma K ]_{*} $ is as in  Note  \ref{Heckedef}.           
\begin{lemma}   \label{simp0}       Let $ K' \in \Upsilon_{G} $ denote  $ gKg^{-1} $.  Then $ [K \sigma K ]_{*} \cdot y_{K}(g) =  [K'  g \sigma K]_{*}  \cdot y_{K'}(1) $.    
\end{lemma}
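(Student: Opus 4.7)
The plan is to reduce the statement to an identity of operators $M_R(K')\to M_R(K)$ and then verify it via functoriality of the RIC functor $M_R$, without invoking Mackey or cohomological axioms.

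The first step is to observe that $y_K(g) = [g]_* \, y_{K'}(1)$. Unwinding Definition \ref{ykg} at the level $K'$, axiom (C3) applied to $1 \in K'$ forces $[1]_*\colon M_R(K')\to M_R(K')$ to be the identity, whence $y_{K'}(1) = \iota_*([Y^\circ])$ with $[Y^\circ] \in N_R(H\cap K') = N_R(H\cap gKg^{-1})$. The same element $\iota_*([Y^\circ])\in M_R(K')$ then appears inside Definition \ref{ykg} at level $K$, giving $y_K(g) = [g]_* \iota_*([Y^\circ]) = [g]_*\, y_{K'}(1)$. Consequently, the claimed identity becomes
$$ [K\sigma K]_* \circ [g]_*  \; = \;  [K' g\sigma K]_* \quad \text{as maps} \quad M_R(K') \longrightarrow M_R(K). $$

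The second step is to verify this operator identity. Expanding both sides via Note \ref{Heckedef} applied to the natural pushforward $M_R \to M_R$, and setting $L = K \cap \sigma K \sigma^{-1}$ (so that $gLg^{-1} = K' \cap g\sigma K (g\sigma)^{-1}$ and $g(\sigma K\sigma^{-1})g^{-1} = g\sigma K (g\sigma)^{-1}$), each side is a composition of a pullback, a proper pushforward along an inclusion, and a twist; they differ only in the position of the twist by $g$. Decomposing $[g\sigma]_* = [\sigma]_*\circ[g]_*$ via the composition rule $[g]\circ[h]=[hg]$ in $\mathcal{P}(G,\Upsilon_G)$ isolates a common $[\sigma]_*$ on the left of both expressions, so the problem reduces to interchanging $[g]_*$ past a pushforward and then past a pullback. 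The first interchange is covariant functoriality of $M_{R,*}$ for the morphism $gLg^{-1} \to \sigma K\sigma^{-1}$ labeled $g$ (both compositions represent this morphism under the rule $[g]\circ[h]=[hg]$). The second interchange mixes covariant and contravariant functors, but after applying axiom (C2) to rewrite every twist $[g]_{*}$ as a pullback $[g^{-1}]^{*}$, both compositions become pullbacks along the same morphism $gLg^{-1} \to K'$ labeled $g^{-1}$, and equality follows from contravariant functoriality of $M_R^{*}$.

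The main obstacle is purely bookkeeping: confirming that each intermediate morphism in $\mathcal{P}(G,\Upsilon_G)$ is well-defined (i.e.\ that the required conjugation inclusions such as $g^{-1}(gLg^{-1})g \subseteq L$ hold) and that composition labels compute correctly. These are elementary conjugation identities, and no substantive geometric or descent input is needed beyond the RIC axioms (C1)--(C3) together with functoriality.
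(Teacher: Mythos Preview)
Your proof is correct and is precisely the ``unraveling the definitions'' that the paper alludes to in its one-line proof. You have simply made explicit the functoriality checks (via axioms (C2), (C3) and covariant/contravariant functoriality of $M_R$) that the paper leaves implicit; in fact you prove the slightly stronger operator identity $[K\sigma K]_* \circ [g]_* = [K' g\sigma K]_*$, which is more than is needed but perfectly fine.
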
  
\begin{proof}  This follows by unraveling 
 the definitions.     
\end{proof}         
Lemma \ref{simp0}  reduces   our     problem to  computing 
$[K ' \varsigma K]_{*} \cdot y_{K'}(1) \in  M_{R}(K)   $ 
for  arbitrary $  K' \in  \Upsilon_{G}  $ and $ \varsigma \in G $, which we also fix for all of this subsection\footnote{Later on,   we will specialize to   $ K' = g K g^{-1} $ and $  \varsigma = g  \sigma  $.}.  For this purpose, we introduce the following notation.      
\begin{notation}   \label{nota1}      We let $ U $ denote  the intersection $ H \cap  K'  \in \Upsilon_{H}  $ and    $  I $  denote   the finite  double coset space $ U \backslash  K'  \varsigma K / K $.  
For each $ i \in I $, we let $ \varsigma_{i} \in G $ denote a representative element for  $   i   $.   
     
\end{notation} \begin{lemma}   \label{mixedone} 
We have  $ [K '  \varsigma    K]_{*} \cdot y_{K'}(1) =  \sum_{ i \in I }  [ U  \varsigma    _{i} K ]_{*} ( [Y_{U}^{\circ}     ] ) $ where $ [U \varsigma_{i} K ]_{*} $  denotes the mixed Hecke correspondence.       
\end{lemma}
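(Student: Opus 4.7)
The plan is to unfold both sides using the definitions of $[K'\varsigma K]_*$, $y_{K'}(1) = \iota_{U, K', *}([Y_U^\circ])$, and the mixed Hecke correspondences $[U \varsigma_i K]_*$, and then invoke the Mackey property of the pushforward $\iota_* \colon N_R \to M_R$ from Definition \ref{mixedHeckedef} as the key step that converts $\pr^* \circ \iota_*$ into a sum of mixed Hecke correspondences.

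Set $L \colonequals K' \cap \varsigma K \varsigma^{-1}$. Unwinding the covariant Hecke correspondence on $M_R$ and the definition of $y_{K'}(1)$, the LHS equals $[\varsigma]_* \circ \pr_{L, \varsigma K \varsigma^{-1}, *} \circ \pr^*_{L, K'} \circ \iota_{U, K', *}([Y_U^\circ])$. The Mackey property for $\iota_*$, applied to $V = U$ and the subgroup $L \subset K'$, rewrites $\pr^*_{L, K'} \circ \iota_{U, K', *}$ as $\sum_{\gamma} [U \gamma L]_*$, with $\gamma$ ranging over a system of representatives for $U \backslash K' / L$.

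The next step is to identify each term $[\varsigma]_* \circ \pr_{L, \varsigma K \varsigma^{-1}, *} \circ [U \gamma L]_*$ with the mixed Hecke correspondence $[U \varsigma' K]_*$, where $\varsigma' \colonequals \gamma \varsigma$. Expanding $[U \gamma L]_*$ and using the categorical functoriality of $M_*$ (recall $[g] \circ [h] = [hg]$), the twisting and trace morphisms collapse into $[\varsigma']_{K' \cap \varsigma' K \varsigma'^{-1}, K, *}$ precomposed with $\iota_{U \cap \varsigma' K \varsigma'^{-1}, K' \cap \varsigma' K \varsigma'^{-1}, *}$ and the pullback from $U$ down to $U \cap \varsigma' K \varsigma'^{-1}$. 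Here I use $\gamma K' \gamma^{-1} = K'$ (since $\gamma \in K'$) and $U \subset K'$ to rewrite $\gamma L \gamma^{-1} = K' \cap \varsigma' K \varsigma'^{-1}$ and $U \cap \gamma L \gamma^{-1} = U \cap \varsigma' K \varsigma'^{-1}$. The remaining nontrivial identification, namely
\[
\pr_{K' \cap \varsigma' K \varsigma'^{-1},\, \varsigma' K \varsigma'^{-1},\, *} \circ \iota_{U \cap \varsigma' K \varsigma'^{-1},\, K' \cap \varsigma' K \varsigma'^{-1},\, *} \;=\; \iota_{U \cap \varsigma' K \varsigma'^{-1},\, \varsigma' K \varsigma'^{-1},\, *},
\]
is a direct instance of the compatibility axiom in Definition \ref{pushforward} applied with $h = 1 \in H$; combined with the factorisation $[\varsigma']_{K' \cap \varsigma' K \varsigma'^{-1}, K, *} = [\varsigma']_{\varsigma' K \varsigma'^{-1}, K, *} \circ \pr_{K' \cap \varsigma' K \varsigma'^{-1},\, \varsigma' K \varsigma'^{-1},\, *}$, this gives exactly $[U \varsigma' K]_*([Y_U^\circ])$.

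It finally remains to check that the assignment $\gamma \mapsto \gamma \varsigma$ induces a bijection $U \backslash K' / L \xrightarrow{\sim} I = U \backslash K' \varsigma K / K$; this follows from the elementary fact that right multiplication by $\varsigma$ gives a bijection $K' / L \xrightarrow{\sim} K' \varsigma K / K$ equivariant for the left $U$-action. The expected main obstacle is the bookkeeping: at each step one must track the tower $U \cap \varsigma' K \varsigma'^{-1} \subset K' \cap \varsigma' K \varsigma'^{-1} \subset \varsigma' K \varsigma'^{-1}$ of compact open subgroups at which $\iota_*$ and its composites are evaluated, and invoke the correct axiom (Mackey, functoriality, or cross-level compatibility) at each stage. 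Once the indices are organised, every individual identification is a routine consequence of the formalism recalled in \S \ref{RICfuncsec}.
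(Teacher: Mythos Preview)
Your argument is correct and follows exactly the route the paper takes: apply the Mackey axiom for the pushforward $\iota_*$ to rewrite $\pr_{L,K'}^* \circ \iota_{U,K',*}$ as $\sum_{\gamma \in U \backslash K'/L}[U\gamma L]_*$, then identify each summand with $[U\gamma\varsigma\,K]_*$ and reindex via the bijection $U\backslash K'/L \simeq I$. The paper compresses all of this into a one-line citation of \cite[Lemma 2.5.7]{AES}; you have effectively reproved that lemma in the present setting, and every bookkeeping step you list (the rewriting of $\gamma L\gamma^{-1}$, the compatibility $\pr_*\circ\iota_* = \iota_*$ from Definition~\ref{pushforward} with $h=1$, and the factorisation of $[\varsigma']_*$) is valid.
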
 
\begin{proof} It is elementary to deduce from the Mackey axiom for $ \iota_{*} $ that  $ [K'  \varsigma_{i} K]_{*} \circ \iota_{U,K',*}   =  \sum  \nolimits _{i \in I}  [U  \varsigma_{i} K ]_{*}   $ 
as $R$-linear maps $ N_{R}(U) \to  M_{R}(K) $  (see \cite[Lemma 2.5.7]{AES}).  The claim now  follows since $ y_{K'}(1) = \iota_{U, K', *}([Y_{U}]^{\circ}) $ by  definition.    
\end{proof}     
Lemma  \ref{mixedone} in turn reduces our problem to    computing the effect of  certain mixed Hecke  correspondences  on  $ [ Y _{U }  ^ { \circ   }    ] $.    It will be convenient to make the following notational convention.  
\begin{notation}   \label{nota2}     For any subgroup $ X $ of $  H $ and  any element  $  \tau \in G $, we denote by $ X_{\tau, K} $ the intersection $ X \cap  \tau   K  \tau ^{-1} $. We  will   write $ X_{\tau} $ for $ X_{\tau, K } $ if $ K $ is  fixed  in  context.       
\end{notation}  
Unraveling the definition  of     $[U \varsigma_{i} K]_{*}$, we  obtain  the    following commutative diagram for each $ i \in I $:         
\begin{equation} \label{mixedtwo} 
\begin{tikzcd}   & N_{R} ( U _ { \varsigma_ { i } } )    \arrow[d  ,  "{\pr_{*}}"']         \arrow[dr,  "{\iota_{*}}"]      \\ 
N_{R}(U)   \arrow[ur,   " {\pr^{*} } "  ]  \arrow[rrr,  bend right = 20,  "{[U \varsigma_{i} K ]}"]  & N_{R}( H _ { \varsigma_{i}} )   \arrow[r,  "{\iota_{*}}"]       & M_{R} (\varsigma_{i} K  \varsigma_{i}^{-1} )  \arrow[r, "{[\varsigma 
   _{i}]_{*}}"]   &     M_{R} (K)      .  
\end{tikzcd}    
\end{equation} 
We wish to compute the effect of the individual maps  in  diagram (\ref{mixedtwo}). 
To this end,  recall (\S \ref{conn})      that for $ V \in \Upsilon_{H} $, $ \pi_{0, V} $ is an abelian  group  that   parametrizes   the   connected components of  $ \mathrm{Sh}_{\Hb}(V) $ and for $ V '   \in     \Upsilon_{H} $ contained in $ V $,   $ \varphi_{V', V} : \pi_{0, V} \to \pi_{0  ,  V '  } $ denotes       the quotient morphism. 
\begin{notation}   \label{nota3}     For any   $ \tau   \in  G $,  we   let     $ A_{\tau} $  denote a set of representatives in $ H $   for the kernel of the  homomorphism   $ \varphi_{U_{\tau}   ,   U   }   :  \pi_{0,  U_{\tau}}   \to   \pi_{0, U }   $.  
\end{notation}   
Here the representatives are picked under the composition $ H   \xrightarrow{\nu} \Tb(\Ab_{f})  \to  \pi_{0, U_{\tau}} $, which is surjective  by \cite[Lemma 5.21]{Milne} and simply  connectedness of $ \mathbf{H}^{\mathrm{der}} $. 
\begin{lemma} For any $  \tau  \in G $,  
\label{mixedthree}     
$$\pr^{*} _{  U _ { \tau  } ,  U } \big ( [ Y_{U} ^ { \circ}    ] )    =  \sum   \nolimits     _ {  h  \in  A_{   \tau   }   } [h]_{*}  \big ( [ Y    _   {      h  U_{  \tau   
}    h ^ { - 1 }  }   ^{\circ}    ]   \big )   \in  N_{R}    (U_{  \tau    } ) $$
where $ [h] $ on the  right  hand  side  
above denotes the morphism $  ( h U _{ \tau } h ^{-1}  \xrightarrow{h}  U_{\tau } )     \in    \mathcal{P}(H ,   \Upsilon_{H}  )     $.    
\end{lemma}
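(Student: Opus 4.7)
The proof is a direct computation in terms of connected components. Recall from \S \ref{CoMAck} that $N_R$ is by construction the RIC functor whose value $N_R(V)$ at $V \in \Upsilon_H$ is the free $R$-module on fundamental cycles of the connected components of $\Sh_\Hb(V)$ — canonically indexed via (\ref{pi0}) by $\pi_{0, V}$ — and whose pullback, pushforward and twisting maps are the geometric ones on cycles. Our standing assumption on $\Upsilon_H$ ensures that (\ref{dagger}) holds for the inclusion $U_\tau \subset U$, so that $\pr_{U_\tau, U}: \Sh_\Hb(U_\tau) \to \Sh_\Hb(U)$ is finite \'{e}tale.

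Since $\pr_{U_\tau, U}$ is \'{e}tale, the scheme-theoretic preimage of the reduced closed subscheme $Y_U^\circ$ is reduced and decomposes as the disjoint union of those connected components of $\Sh_\Hb(U_\tau)$ lying above $Y_U^\circ$. By naturality of (\ref{pi0}) with respect to degeneracies, the induced map on components corresponds to the group homomorphism $\varphi_{U_\tau, U}: \pi_{0, U_\tau} \to \pi_{0, U}$, so these preimage components are precisely those indexed by $\ker(\varphi_{U_\tau, U})$. Since pullback of the fundamental cycle of a reduced closed subscheme along a finite \'{e}tale morphism equals the fundamental cycle of its scheme-theoretic preimage (all multiplicities being one), denoting by $[Z_{[t]}] \in N_R(U_\tau)$ the fundamental cycle of the component indexed by $[t] \in \pi_{0, U_\tau}$, we obtain
$$\pr^*_{U_\tau, U}\bigl([Y_U^\circ]\bigr) \;=\; \sum_{[t] \in \ker(\varphi_{U_\tau, U})} [Z_{[t]}].$$

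It remains to identify each term on the right with $[h]_*\bigl([Y^\circ_{hU_\tau h^{-1}}]\bigr)$ for suitable $h$. By Notation \ref{nota3}, as $h$ runs over $A_\tau$ the classes $[\nu(h)]$ bijectively enumerate $\ker(\varphi_{U_\tau, U})$. For each such $h$, the twisting isomorphism $[h]: \Sh_\Hb(hU_\tau h^{-1}) \xrightarrow{\sim} \Sh_\Hb(U_\tau)$ — the geometric realization of the morphism $(hU_\tau h^{-1} \xrightarrow{h} U_\tau)$ in $\mathcal{P}(H, \Upsilon_H)$, obtained from (\ref{twistingiso}) with $K = hU_\tau h^{-1}$ and $g = h$ — sends $Y^\circ_{hU_\tau h^{-1}}$ to the component indexed by $[\nu(h)]$, as recorded in the discussion following Definition \ref{Ycirc}. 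Hence $[h]_*\bigl([Y^\circ_{hU_\tau h^{-1}}]\bigr) = [Z_{[\nu(h)]}]$, and summing over $h \in A_\tau$ yields the desired identity. No serious obstacle arises; the argument is essentially bookkeeping, with the only subtleties being the compatibility between abstract functorial maps in $N_R$ and their geometric realizations on components, and tracking the direction of the twisting isomorphism dictated by (\ref{twistingiso}).
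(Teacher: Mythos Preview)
Your proof is correct and follows essentially the same approach as the paper's: both compute the pullback by using \'{e}taleness of $\pr_{U_\tau,U}$ to identify $\pr_{U_\tau,U}^{-1}(Y_U^\circ)$ with the disjoint union of reduced components indexed by $\ker(\varphi_{U_\tau,U})$, and then match each such component with $[h]_*\bigl([Y^\circ_{hU_\tau h^{-1}}]\bigr)$ via the effect of the twisting isomorphism on $\pi_0$. The only difference is the order of the two steps, which is immaterial.
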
 
\begin{proof}  Let $ \eta \in H $.  Recall (\S \ref{conn})         that the twisting  isomorphism    $     [h] \colon \mathrm{Sh}_{\Hb}( h U_{\tau }  h ^{-1} )  \xrightarrow { \sim }  \mathrm{Sh}_{\Hb}( U  _ {  \tau    }    ) $  sends  the  component indexed by the class of  $ \eta  $ in $  \pi_{0 , h U_{\varsigma} h^{-1} }  =  \pi_{0,  U_{\varsigma} }  $   to  the   component of $ \Sh_{\Hb}(U_{\varsigma}) $ indexed by    the   
    class of $ h \eta  $ (or that    of     $ \eta h $)  in  $ \pi_{0,  U_{\varsigma  }    }      $.   So the RHS of the equality above  is  just the formal sum of the  fundamental cycles  
corresponding to  the    connected components of $ \mathrm{Sh}_{\Hb}(U_{\tau}  ) $ indexed by $ A_{\tau }   $.    We argue that this also equals the LHS.   Since  the  morphism $ \pr_{U_{\tau }, U} \colon \mathrm{Sh}_{\Hb}(U_{\tau }) \to  \mathrm{Sh}_{\Hb}(U) $ is  \'{e}tale, so  is its pullback $ \pr_{U_{\tau}, U } ^{-1} ( Y_{U} ^ {\circ} ) \to  Y_{U}^{\circ} $ along $ Y_{U}^{\circ}  \to \mathrm{Sh}_{\Hb}(U) $. This  implies that  $ \pr_{U_{\tau}, U}^{-1}(Y_{U}^{\circ}) $ is reduced 
\cite[\href{https://stacks.math.columbia.edu/tag/03PC}{Tag 03PC}]{stacks-project}). So the scheme  $  \pr^{-1}_{U_{\tau}, U}(Y_{U}^{\circ}) $ is equal to the disjoint union of  the  components of $ \Sh_{\Hb}(U_{\tau}) $ indexed by $ A_{\tau}$. The  claim  now   follows by definition of flat pullback  \cite[\S 1.7,  \S 1.5]{Fulton}. \end{proof}    
 
Next we need  a  result for degrees of maps  between  connected  components of    $ \mathrm{Sh}_{\Hb} $. 
\begin{lemma}   \label{mixedfour}   Let $ V_{1}, V_{2} \in \Upsilon _ { H }  $ with $ V_{2} \subset V_{1} $ and $ \eta \in \Tb(\Ab_{f}) $.  For $  j  = 1, 2 $,  let $ Y_{\eta, V_{j}} $ denote the component of $ \mathrm{Sh}_{\Hb}(V_{j}) $  indexed by  the class of $ \eta $ in $  \pi_{0,  V_{j}} $ and let  $ e $ denote the cardinality of the kernel     of    $ \varphi_{V_{2}, V_{1}} \colon   \pi_{0, V_{2}}    \to   \pi_{0,  V_{1} } $.    Then the  natural map $ Y_{\eta, V_{2}}  \to  Y_{\eta,  V_{1}} $ is finite \'{e}tale of degree $ [V_{1} :  V_{2}]/e $.         
\end{lemma}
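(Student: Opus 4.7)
The plan is to first deduce that the natural map $f \colon \Sh_{\Hb}(V_2) \to \Sh_{\Hb}(V_1)$ is finite étale of total degree $[V_1 : V_2]$, and then exploit a Galois refinement to show that the restrictions to the various connected components of the preimage of $Y_{\eta, V_1}$ all have the same degree. The first fact follows from \cite[Lemma 2.7.1]{AES} applied to $\Hb$ with the collection $\Upsilon_{H}$, whose defining property on intersections with $\mathbf{Z}_{\Hb}(\QQ)$ ensures the required hypothesis is satisfied. Since $Y_{\eta, V_1}$ is a clopen subscheme of $\Sh_{\Hb}(V_1)$, the preimage $f^{-1}(Y_{\eta, V_1})$ is clopen in $\Sh_{\Hb}(V_2)$ and the restriction of $f$ remains finite étale of total degree $[V_1:V_2]$. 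This preimage decomposes as the disjoint union of the components $Y_{\zeta, V_2}$ for $\zeta$ ranging over $\varphi_{V_2, V_1}^{-1}([\eta])$, a set of cardinality $e$ by assumption. Writing $d_\zeta$ for the degree of $Y_{\zeta, V_2} \to Y_{\eta, V_1}$, we thus have $\sum_\zeta d_\zeta = [V_1:V_2]$, so it suffices to prove that $d_\zeta$ is independent of $\zeta$.

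To prove this independence, I would choose, using the defining closure property of $\Upsilon_{H}$, a subgroup $V_3 \in \Upsilon_{H}$ that is contained in $V_2$ and normal in $V_1$. Then by the same application of \cite[Lemma 2.7.1]{AES} (together with the argument used in the proof of Lemma \ref{allgoodlemma}), both degeneracy maps $\Sh_{\Hb}(V_3) \to \Sh_{\Hb}(V_1)$ and $\Sh_{\Hb}(V_3) \to \Sh_{\Hb}(V_2)$ are finite étale Galois covers with groups $V_1/V_3$ and $V_2/V_3$ respectively. The group $V_1/V_3$ acts on $\Sh_{\Hb}(V_3)$ by automorphisms over $\Sh_{\Hb}(V_1)$, and this action permutes the connected components via the composition $V_1/V_3 \xrightarrow{\nu} \nu(V_1)/\nu(V_3) \to \pi_{0, V_3}$ (translation in the abelian group $\pi_{0, V_3}$). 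Since the fiber $\varphi_{V_3, V_1}^{-1}([\eta])$ is precisely a coset of the image of $\nu(V_1)/\nu(V_3)$ in $\pi_{0, V_3}$, this action is transitive on the fiber. Consequently, all components of $\Sh_{\Hb}(V_3)$ above $Y_{\eta, V_1}$ are related by $\Sh_{\Hb}(V_1)$-isomorphisms, and so share a common degree $\delta := [V_1:V_3]/e_3$ over $Y_{\eta, V_1}$, where $e_3 := |\ker \varphi_{V_3, V_1}|$.

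An identical argument applied to the Galois cover $\Sh_{\Hb}(V_3) \to \Sh_{\Hb}(V_2)$ shows that each component of $\Sh_{\Hb}(V_3)$ above any given $Y_{\zeta, V_2}$ has common degree $\delta' := [V_2:V_3]/e_{32}$ over $Y_{\zeta, V_2}$, where $e_{32} := |\ker \varphi_{V_3, V_2}|$. Multiplicativity of degrees along a chain $Y_{\mu, V_3} \to Y_{\zeta, V_2} \to Y_{\eta, V_1}$ then gives $\delta = \delta' \cdot d_\zeta$, so $d_\zeta = \delta/\delta'$ is independent of $\zeta$. Finally, the short exact sequence of abelian groups
$$0 \to \ker \varphi_{V_3, V_2} \to \ker \varphi_{V_3, V_1} \to \ker \varphi_{V_2, V_1} \to 0$$
(arising from the surjectivity of $\varphi_{V_3, V_2}$ together with $\varphi_{V_3, V_1} = \varphi_{V_2, V_1} \circ \varphi_{V_3, V_2}$) yields $e_3 = e_{32} \cdot e$, whence $d_\zeta = [V_1:V_2] \cdot e_{32}/e_3 = [V_1:V_2]/e$, as desired.

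The main obstacle is establishing the transitivity of the Galois action on the fiber $\varphi_{V_3, V_1}^{-1}([\eta])$, which rests on the abelian structure of the $\pi_0$'s and the explicit description (\ref{pi0}) of connected components. Once that is in place, the remainder is a formal manipulation of degrees and kernel cardinalities in the Galois framework provided by \cite[Lemma 2.7.1]{AES}.
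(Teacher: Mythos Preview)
Your proof is correct and follows essentially the same approach as the paper's: both refine to a subgroup normal in $V_1$ (your $V_3 \in \Upsilon_H$, the paper's $W$ not necessarily in $\Upsilon_H$ but adjusted to satisfy the center condition), use the transitivity of the Galois action on the components above $Y_{\eta,V_1}$ to equalize the degrees, and then conclude via multiplicativity of indices and the relation $e_3 = e_{32} \cdot e$ among kernel cardinalities. The only cosmetic difference is that the paper phrases the transitivity via explicitly chosen representatives $\upsilon_k \in V$ for $\ker(\varphi_{W,V})$ acting by twisting isomorphisms, whereas you invoke the full $V_1/V_3$-action through $\nu$; these are the same argument.
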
 
\begin{proof}   Let $ W $ be a compact open subgroup of $ V_{2}  $   such that $  W   \triangleleft V_{1} $. To ease notation, we let $ V $ denote an element of $  \left  \{ V_{1} ,V_{2} \right \} $. By enlarging $ W $ if necessary, we may assume that  $  W \cap \mathbf{Z}_{\Hb}(\QQ) = V \cap \mathbf{Z}_{\Hb}(\QQ) $, so that  the map $ \mathrm{Sh}_{\Hb}(W) \to \mathrm{Sh}_{\Hb}(V) $ is a Galois cover with Galois group $ V /W $. Since $ Y_{\eta, V} \hookrightarrow \Sh_{\Hb}(V)    
  $ is an open immersion, $$   Z \colonequals \pr^{-1}_{W,V}(Y_{\eta,V   }  )  \to  Y_{\eta,V} $$ is a Galois cover  of  degree of $ [V : W] $ as well. Let $ e_{V} $ denote  the 
 cardinality  for   $ \ker  ( \varphi_{W, V}   )      $ and let $ \upsilon_{1}, \ldots \upsilon_{e_{V}}  \in V $ be representatives of $ \ker (   \varphi_{W, V}    )    $. Then  $  Z  $    is the   union of components of $ \Sh_{\Hb}(W)    
 $     indexed by the classes $ [\eta \upsilon_{k}] \in  \pi_{0,   W   }    $ for $ k =1, \ldots, e_{V} $.       Since $ [\upsilon_{k} ]    \colon \Sh_{\Hb}(W)  \to \Sh_{\Hb}(W) $ are automorphisms that  act transitively on the   connected components  contained  in $ Z $,  we see that the degree of $ Y_{\eta \upsilon_{k}, W } \to Y_{\eta, V} $ is independent of $ k $ and therefore equal to $ [V : W  ]/e_{V}     $. 
 Since $ e_{V_{1}} = e_{V_{2}} \cdot  e $ and $ [V_{1} : W]  =  [V_{1} : V_{2}]   \cdot [V_{2} : W] $, we conclude that  $ Y_{\eta, V_{2}} \to Y_{\eta, V_{1}} $ is finite \'{e}tale  with degree  as  claimed.    
\end{proof} 
\begin{remark} Note that in the proof, we do not require $ W \in \Upsilon_{H} $.   
\end{remark}   
\begin{corollary}   \label{mixedfour'}  For any $ \tau \in G $ and $ \eta  \in  \pi_{0,  U_{\tau}} $, $$ \pr_{U_{\tau}, H_{\tau}, *}(  [ Y_{\eta,  U_{\tau}}]  )   =      e_{\tau}^{-1}   \deg \, [ U  \tau K ]_{*}   \cdot [Y_{\eta,  H_{\tau}}]  $$ 
where $ Y_{\eta, U_{\tau}}$ denotes the  component of $ \Sh_{\Hb}(U_{\tau}) $ indexed by $ \eta $, $ Y_{\eta, H_{\tau}} $  denotes  the component of $  \Sh_{\Hb}(H_{\tau}) $ indexed by $ \varphi_{U_{\tau}, H_{\tau}}(\eta)  \in   \pi_{0 ,   H_{\tau}}  $ and $ e_{\tau} $   denotes the cardinality of $ \ker (\varphi_{U_{\tau},  H_{\tau}} ) $.          
\end{corollary}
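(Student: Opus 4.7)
The plan is to reduce the statement to Lemma \ref{mixedfour} combined with the standard fact that the proper pushforward of the fundamental cycle along a finite étale morphism of connected smooth schemes is multiplication by the degree. The cycle $[Y_{\eta, U_\tau}]$ is the fundamental cycle of a single connected component of $\Sh_\Hb(U_\tau)$, and $\pr_{U_\tau, H_\tau} \colon \Sh_\Hb(U_\tau) \to \Sh_\Hb(H_\tau)$ sends each connected component surjectively onto the component indexed by its image under $\varphi_{U_\tau, H_\tau}$. In particular $\pr_{U_\tau, H_\tau}$ restricts to a morphism $Y_{\eta, U_\tau} \to Y_{\eta, H_\tau}$, since $Y_{\eta, H_\tau}$ is by definition indexed by $\varphi_{U_\tau, H_\tau}(\eta)$.

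I would then invoke Lemma \ref{mixedfour} with $V_1 = H_\tau$ and $V_2 = U_\tau$ (both of which lie in $\Upsilon_H$ by construction) to conclude that this restricted map is finite étale of degree $[H_\tau : U_\tau]/e_\tau$, where $e_\tau = |\ker(\varphi_{U_\tau, H_\tau})|$. Next, I would unwind Definition \ref{mixedHeckedef} to identify the numerator $[H_\tau : U_\tau] = [H \cap \tau K \tau^{-1} : U \cap \tau K \tau^{-1}]$ with $\deg[U\tau K]_*$, so the degree of the restricted map is exactly $e_\tau^{-1} \deg[U\tau K]_*$.

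Finally, the proper pushforward along a finite étale morphism of degree $d$ between connected smooth schemes sends the fundamental cycle of the source to $d$ times the fundamental cycle of the target (see \cite[\S 1.4]{Fulton}). Applying this to the map $Y_{\eta, U_\tau} \to Y_{\eta, H_\tau}$, and using the functoriality of pushforward on $N$ (and then base-changing to $N_R$), gives
\[
\pr_{U_\tau, H_\tau, *}\bigl([Y_{\eta, U_\tau}]\bigr) = e_\tau^{-1} \deg[U\tau K]_* \cdot [Y_{\eta, H_\tau}],
\]
as claimed. The only mild subtlety — which I would verify explicitly — is that the pushforward in the RIC functor $N$ on a single component indicator $[Y_{\eta, U_\tau}]$ genuinely agrees with the cycle-theoretic pushforward along the restricted map $Y_{\eta, U_\tau} \to Y_{\eta, H_\tau}$; this follows because $Y_{\eta, U_\tau}$ is an open and closed subscheme of $\Sh_\Hb(U_\tau)$ mapped into the open and closed subscheme $Y_{\eta, H_\tau}$, so pushforward commutes with this decomposition. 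There is no substantive obstacle beyond this bookkeeping.
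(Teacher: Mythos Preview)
Your proposal is correct and follows exactly the paper's own argument: the paper's proof is the one-line observation that the claim follows from Lemma \ref{mixedfour} together with the definition of proper pushforward in \cite[\S 1.4]{Fulton}, and you have simply spelled out these steps (including the identification $\deg[U\tau K]_* = [H_\tau : U_\tau]$ from Definition \ref{mixedHeckedef}) in more detail.
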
   
\begin{proof}  This  follows by Lemma \ref{mixedfour} and   the   definition  of  proper   pushforward  \cite[\S 1.4]{Fulton}.        
\end{proof}    
We now apply these results to the maps in diagram (\ref{mixedtwo})  by  specializing to $ \tau =  \varsigma_{i}   $.    Let us first fix some  additional  notation.    
\begin{notation}   \label{nota4}     For each $ i \in I $, we will let $U_{i} $, $H_{i}$,  $ A_{i} $  denote $ U_{\varsigma_{i}} , H_{\varsigma_{i}} , A_{\varsigma_{i}} $   respectively.    We let   $ B_{i}  \subset  A_{i}    $ denote   a    set of representatives for the image of $ A_{i }$ under the quotient morphism  $  \varphi _ { U _{ i }  ,  H_{i}   }      :   \pi_{0,  U_{i}} \to   \pi_{0,   H_{i }    }    $.    Here  we   are identifying $ A_{i} = A_{\varsigma_{i}}   $ with kernel of $  \varphi_{ U_{i},  U}   :   \pi_{0,  U_{i}}  \to    \pi_{0,  U}     $.   We let $ e_{i}  =   e_{\varsigma_{i}}     $  denote the cardinality of $ \ker  (    \varphi_{U_{i} , H_{i}   }  ) $ and $ c_{i}  $ denote  the cardinality of $ \ker ( \varphi_{U_{i},  H_{i} }   )      \cap   \ker    (   \varphi_{U_{i}, U}) $. 
\end{notation}    
If we  identify $ A_{i} $ with $ \ker (\varphi_{U_{i}, U}) $ and $ B_{i} $ with $ \varphi_{U_{i}, H_{i}}(A_{i}) $, then $ c_{i }$ is the kernel of the surjective  homomorphism    $ A_{i}  \to  B_{i}   $, and so  $ c_{i} = |A_{i}|/ |B_{i}| $. 
Let us  also   emphasize that $ H_{i} = H_{\varsigma_{i}} = H \cap  \varsigma_{i} K \varsigma_{i}^{-1} $ is a \emph{compact open} subgroup of $ H $ (despite the notational similarity  with  $  H $). To make the statement of our main   theorem more self-contained, we recall  most  of     the necessary notations. 
\begin{theorem}[Explicit descent] \label{descent}    
For  $ K' , K \in  \Upsilon_{G}    $ and  $ \varsigma \in G  $, denote  $ U =  H \cap  K '    $ and $  I = U \backslash K  '    \varsigma K / K  $. For each $ i \in I $, let $ \varsigma_{i} \in G $ denote a representative  for the class  $ i $ and  denote $  U_{i}  =   U  \cap  \varsigma_{i} K  \varsigma_{i} ^{-1} $, $ H_{i} = H  \cap  \varsigma_{i}  K  \varsigma_{i} ^{-1} $ and  $ \deg \, [U \varsigma_{i} K ]_{*} = [  H_{i} : U_{i} ] $.  
For each $ i $, let $ A_{i} \subset H $ denote a set of representatives for $  \ker(\varphi_{U_{i}, U }  ) $,  $ B_{i} \subset A_{i} $ denote a set of representatives for $ \varphi_{U_{i},  H_{i}}( \ker ( \varphi_{U,   U_{i} } )  ) $ and  set   $ c_{i}   =     |A_{i} |  /  |B_{i}|  $, $ e_{i} = | \ker (\varphi_{U_{i}, H_{i}}  )| $.      Then      
$$  [K   '  \varsigma K ]_{*}  \cdot  y_{K'}(1)   =   \sum_{ i \in I }  \sum  _ {   h    \in  B_{i} }   c_{i}  e_{i}^{-1} \deg [U   \varsigma_{i}   K ]_{*}    \cdot     y_{K}(   h  \varsigma _{i} )  $$ 
as elements of $ M_{R}(K)  $. 
\end{theorem}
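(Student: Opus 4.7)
The plan is to compute $[K' \varsigma K]_* \cdot y_{K'}(1)$ by systematically unwinding its definition through the chain of lemmas and combinatorial bookkeeping established in \S\ref{CoMAck} and earlier in this section. First, Lemma \ref{mixedone} gives
$$[K' \varsigma K]_* \cdot y_{K'}(1) \;=\; \sum_{i \in I} [U \varsigma_i K]_*\bigl([Y_U^\circ]\bigr),$$
and each $[U \varsigma_i K]_*$ factors via diagram (\ref{mixedtwo}) as $[\varsigma_i]_* \circ \iota_* \circ \pr_{U_i, H_i, *} \circ \pr_{U_i, U}^*$, so it suffices to evaluate the inner pullback–pushforward on $[Y_U^\circ]$ and then post-compose.

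The inner computation proceeds component by component. By Lemma \ref{mixedthree}, $\pr_{U_i, U}^*([Y_U^\circ]) = \sum_{h \in A_i} [h]_*([Y^\circ_{hU_ih^{-1}}])$, which, using the description of twisting isomorphisms on connected components from \S\ref{conn}, I identify with the sum of the fundamental cycles $[Y_{\nu(h), U_i}]$ of the components of $\mathrm{Sh}_\Hb(U_i)$ indexed by the classes $\nu(h) \in \pi_{0, U_i}$ for $h \in A_i$. Corollary \ref{mixedfour'} then gives
$$\pr_{U_i, H_i, *}\bigl([Y_{\nu(h), U_i}]\bigr) \;=\; e_i^{-1}\, \deg[U \varsigma_i K]_* \cdot [Y_{\varphi_{U_i, H_i}(\nu(h)),\, H_i}].$$
Summing over $h \in A_i$ and collecting terms with the same image in $\pi_{0, H_i}$, the identity $|A_i|/|B_i| = c_i$ (coming from the fact that the restriction of $\varphi_{U_i, H_i}$ to the group $A_i = \ker(\varphi_{U_i, U})$ has uniform fibers of size $|\ker(\varphi_{U_i, H_i}) \cap \ker(\varphi_{U_i, U})| = c_i$) collapses the sum over $A_i$ into a sum over $B_i$ weighted by $c_i$.

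It remains to post-compose with $[\varsigma_i]_* \circ \iota_*$ and recognize the result in terms of the cycles $y_K(-)$. For $h \in B_i \subset H$, one has $[Y_{\nu(h), H_i}] = [h]_*([Y^\circ_{hH_ih^{-1}}])$ by the same component-tracking rule, and the pushforward compatibility of $\iota_*$ with twisting by $h \in H$ (Definition \ref{pushforward}) together with the composition rule $[\varsigma_i]_* \circ [h]_* = [h\varsigma_i]_*$ in $\mathcal{P}(G, \Upsilon_G)$ yields
$$[\varsigma_i]_* \iota_* [Y_{\nu(h), H_i}] \;=\; [h\varsigma_i]_* \iota_*\bigl([Y^\circ_{H \cap (h\varsigma_i) K (h\varsigma_i)^{-1}}]\bigr) \;=\; y_K(h \varsigma_i).$$
Assembling the pieces produces the claimed formula. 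The main obstacle I anticipate is bookkeeping: faithfully coordinating the twisting-isomorphism conventions on connected components, the identification of $A_i$ with $\ker(\varphi_{U_i, U}) \subset \pi_{0, U_i}$, and the combinatorial collapse from $A_i$ to $B_i$, while correctly tracking the factor $[H_i : U_i]$ contributed by $\pr_{U_i, H_i, *}$ and keeping the actions of the $H$-twistings $[h]_*$ consistent across the $N$ and $M$ sides.
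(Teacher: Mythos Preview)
Your proposal is correct and follows essentially the same approach as the paper: reduce via Lemma \ref{mixedone}, factor each $[U\varsigma_i K]_*$ through diagram (\ref{mixedtwo}), apply Lemma \ref{mixedthree} and Corollary \ref{mixedfour'} to the inner pullback--pushforward, collapse $A_i$ to $B_i$ with weight $c_i$, and then use the commutativity in (\ref{mixedcomm}) to recognize $y_K(h\varsigma_i)$. The bookkeeping concerns you flag are exactly the ones the paper handles, and your treatment of them is accurate.
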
   
\begin{proof}   By Lemma  \ref{mixedone}, it suffices to compute $ [U   \varsigma _{i} K]_{*} ( [Y_{U}]^{\circ}) $ for  each  $ i \in I $.   Note that the integer  $  c_{i} $ is the number of connected  components of $ \Sh_{\Hb}(U_{i}) $  contained  in  
 $ \pr_{U_{i}, U} ^{-1}(Y_{U}^{\circ}) $ that collapse into  a  single  component  of  $ \Sh_{\Hb}(H_{i}) $       under  $  \pr_{U_{i},  H_{i} }  $.     
Invoking Lemma \ref{mixedthree}  and  Corollary  \ref{mixedfour'}, we see that 
\begin{align}  
\pr_{ U_{  i },  H_{   i    }     , *}     \circ   \pr^{*} _ {  U _  { i }  ,  U  }     \big  (    [ Y _{ U}  ^ { \circ}  ]    \big     ) 
&  =     \sum   \nolimits      _ {   h    \in B_{i} }  \big  ( c_{i}  e _ { i } ^ { -1 }  
\deg[U   \varsigma_{i}    K ]_{*}  \big )   \cdot      [ h  ]_{*}     \big (  [Y^{\circ}_{H_{h \varsigma_{i}}} ]   \big  )       .    \label{mixedfive}       
\end{align}
where $ [h] $ above is the morphism $ (H_{h\varsigma_{i}}  \xrightarrow{h}H_{\varsigma_{i}} )\in   \mathcal{P}(H,\Upsilon_{H})$.  Now for each $ i \in I  $  and   $ h  \in B_{i} $,  we  have  a  commutative  diagram 
\begin{equation}    \label{mixedcomm}        
\begin{tikzcd}[sep =  large]     N_{R}( H_{ h  \varsigma  _{i}} ) \arrow[r  ,   "{[\iota]_{*}}"]   \arrow[d,   "{[h]_{*}}",   "{\resizebox{0.7cm}{0.08cm}{$\sim$}}" labl1]  &    M_{R} (  h \varsigma_{i} K \varsigma_{i}^{-1} h^{-1}   )    \arrow[d, "{[h]_{*}}",  "{\resizebox{0.7cm}{0.08cm}{$\sim$}}"' labl1]    \arrow[dr,   "{[  h    \varsigma _ { i } ]_{*}}" ]         \\
N_{R} (H_{   \varsigma    _{i}} )    \arrow[r  ,  "{[\iota]_{*}}"]    &   M_{R}     (  \varsigma_{i}  K  \varsigma_{i}^ {  -  1   }   )   \arrow[r  ,   "{[\varsigma_{i}]_{*}}"]   &   M_{R}    (K)  
\end{tikzcd} 
\end{equation}     
Let us  momentarily  denote $ K_{i} = \varsigma_{i} K \varsigma_{i}^{-1} $ to  simplify  notation.   Using    (\ref{mixedfive})     and the commutativity of diagrams (\ref{mixedtwo}) and (\ref{mixedcomm}),   
  we  see that 
\begin{align*}   
[U \varsigma_{i}K]_{*}\big([Y_{U}^{\circ}]\big)  & = [\varsigma_{i}]_{K_{i}, K,*} \circ \iota_{U_{i}, K_{i},*}\circ \pr^{*}_{U_{i},U}\big([Y_{U}^{\circ}]\big)\\
& = [\varsigma_{i}]_{K_{i},K,*}\circ \iota_{H_{i}, K_{i},*}\circ\pr_{U_{i},H_{i},*}  \circ \pr^{*}_{U_{i},U}\big([Y_{U}^{\circ}]\big)\\    
& = [\varsigma_{i}]_{K_{i},K,*}\circ\iota_{H_{i},K_{i},*} \bigg(\sum\nolimits_{h\in B_{i}}\big(c_{i}e_{i}^{-1}\deg[U \varsigma_{i} K ]_{*}\big)\cdot[h]_{H_{h\varsigma_{i}} ,  H_{i} , *}\big([Y_{H_{h\varsigma_{i}} }]\big)\bigg )  
 \\   
& = \sum\nolimits_{h \in B_{i}}\big(c_{i}e_{i}^{-1}\deg[U\varsigma_{i} K]_{*}\big)\cdot[h\varsigma_{i}]_{ hK_{i}h^{-1} , K , *}\circ\iota_{H_{h\varsigma_{i}} , hK_{i}h^ { -1},*}\big([Y_{H_{h\varsigma_{i}}}^{\circ}]\big)
\\
& = \sum \nolimits _ { h \in B_{i}}   c    _  { i  }      e_{i}^{-1} \deg [U \varsigma_{i} K ]_{*}  \cdot      y_{K} ( h   \varsigma _ { i } )   
\end{align*}   
which  finishes  the   proof.  
\end{proof}  
In the formula above, we may require the inner sum to be over $ A_{i} $ (instead of    $ B_{i} $) after  removing $ c_{i}   $  from  the  expression, since $ y_{K}(h\varsigma_{i}) = y_{K}(h' \varsigma_{i}) $ for $ h , h' \in A_{i} $ if the classes of $ h , h ' $ are equal in  $ \pi_{0,   H_{i} } $.

\begin{theorembis}{descent}  With notations  as above,  $$ [K   '  \varsigma K ]_{*}  \cdot  y_{K'}(1)   =   \sum_{ i \in I }  \sum  _ {   h    \in  A_{i} }    e_{i}^{-1} \deg [U   \varsigma_{i}   K ]_{*}    \cdot     y_{K}(   h  \varsigma _{i} )  . $$
\end{theorembis}    

One reason for preferring the first version is that a simplification occurs when    $ \nu(U)  $ contains $ \nu(H_{i}    ) $  for all  $   i  \in  I   $.   We record it as a lemma for ease of reference in \S \ref{mainfalseexamplessec}.    
\begin{lemma}    \label{Bilemma}  If $ \nu(U) $  contains $ \nu (H_{i}) $ for some $ i \in I $, we have $ c_{i} = e_{i} $. If moreover $ \nu(U) $ equals $ \nu(H_{i}) $, $ B_{i} $ is a singleton.    
\end{lemma}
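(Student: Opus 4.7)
My plan is to prove both claims by directly identifying the kernels $\ker(\varphi_{U_i,U})$ and $\ker(\varphi_{U_i,H_i})$ as explicit subquotients of $\Tb(\Ab_f)$ using the description (\ref{pi0}). Recall that since $\mathbf{H}^{\mathrm{der}}$ is simply connected, $\nu \colon H \to \Tb(\Ab_f)$ is surjective on every compact open subgroup in the sense that $\nu(V) \subset \Tb(\Ab_f)$ is the relevant image, and $\pi_{0,V}$ carries the abelian group structure coming from $\Tb(\Ab_f)/\bigl(\Tb(\QQ)^{\dagger} \cdot \nu(V)\bigr)$. The key observation is that for any inclusion $V' \subset V$ in $\Upsilon_H$, the quotient homomorphism $\varphi_{V',V} \colon \pi_{0,V'} \to \pi_{0,V}$ has kernel canonically identified with $\bigl(\Tb(\QQ)^{\dagger}\cdot\nu(V)\bigr)/\bigl(\Tb(\QQ)^{\dagger}\cdot \nu(V')\bigr)$.

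Applying this to $V' = U_i$ with $V \in \{U, H_i\}$, I would obtain
\[
\ker(\varphi_{U_i,U}) = \frac{\Tb(\QQ)^{\dagger}\cdot\nu(U)}{\Tb(\QQ)^{\dagger}\cdot\nu(U_i)}, \qquad \ker(\varphi_{U_i,H_i}) = \frac{\Tb(\QQ)^{\dagger}\cdot\nu(H_i)}{\Tb(\QQ)^{\dagger}\cdot\nu(U_i)},
\]
both viewed as subgroups of $\pi_{0,U_i}$. For the first assertion, the hypothesis $\nu(U) \supset \nu(H_i)$ forces $\Tb(\QQ)^{\dagger}\cdot\nu(H_i) \subset \Tb(\QQ)^{\dagger}\cdot\nu(U)$, so $\ker(\varphi_{U_i,H_i}) \subset \ker(\varphi_{U_i,U})$. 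Consequently $\ker(\varphi_{U_i,H_i}) \cap \ker(\varphi_{U_i,U}) = \ker(\varphi_{U_i,H_i})$, giving $c_i = e_i$ directly from the definitions in Notation \ref{nota4}.

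For the second assertion, when $\nu(U) = \nu(H_i)$ the two kernels coincide entirely, so $|A_i| = |\ker(\varphi_{U_i,U})| = e_i = c_i$, and therefore $|B_i| = |A_i|/c_i = 1$, i.e.\ $B_i$ is a singleton. The argument is genuinely formal once the kernels are identified, so there is no serious obstacle; the only point requiring a bit of care is verifying that the set-theoretic bijection in (\ref{pi0}) respects the group operations so that the kernel identification above is valid, but this is precisely the transport of structure declared after (\ref{pi0}) and the fact that $\varphi_{V',V}$ arises from the quotient $\Tb(\Ab_f)/\Tb(\QQ)^{\dagger}\cdot\nu(V') \twoheadrightarrow \Tb(\Ab_f)/\Tb(\QQ)^{\dagger}\cdot\nu(V)$.
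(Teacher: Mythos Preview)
Your argument is correct. The paper states this lemma without proof, recording it only ``for ease of reference'' after observing that a simplification occurs under the stated hypothesis; your identification of the two kernels via the explicit description $\pi_{0,V} \simeq \Tb(\QQ)^{\dagger}\backslash \Tb(\Ab_f)/\nu(V)$ and the observation that $\nu(H_i)\subset\nu(U)$ forces $\ker(\varphi_{U_i,H_i})\subset\ker(\varphi_{U_i,U})$ is exactly the intended (and essentially only) route. For the second claim you could argue even more directly: once the two kernels coincide, $B_i$ is identified with $\varphi_{U_i,H_i}(\ker(\varphi_{U_i,U})) = \varphi_{U_i,H_i}(\ker(\varphi_{U_i,H_i})) = \{1\}$, avoiding the detour through $|A_i|/c_i$.
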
 
\begin{remark} That $ \nu(U) \supset \nu(H_{i}   
  ) $ for all $ i \in I $ holds, for instance,   if $ \sigma \in \Gb(\QQ_{\ell}) $ for some rational prime $ \ell $  where 
$ \Tb $ is unramified, $ U $ is of the form $ U_{\ell} U^{\ell} $ for $  U_{\ell} \subset \Hb(\QQ_{\ell}) $ and $ \nu(U_{\ell})  \subset  \Tb(\QQ_{\ell}) $ is the unique maximal  compact  open   subgroup. So in this case,  
the coefficients in the expression  of   Theorem \ref{descent}  only involve   mixed  degrees. 
See  \cite[\S 5]{AES} where  several  techniques were developed to aid     their computation and Part II of \emph{op.cit.} for several   concrete     examples. 
\end{remark}

If we replace $ y_{K'}(1) $ with $ x_{K'}(1)$,  the formula is much simpler and does not require as much work.  
\begin{proposition}  \label{xKgdescent}  We  have  $  [K'  \varsigma   K ]_{*}  \cdot x_{K'}(1)   =   \sum_{ i \in I }   \deg   \, [U \varsigma_{i} K ]_{*}    \cdot     x_{K}(  \varsigma_{i} )  $.  
\end{proposition}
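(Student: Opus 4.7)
The plan is to follow the same outline as the proof of Theorem \ref{descent}, but to observe that at every step the bookkeeping of connected components collapses since we are working with the full fundamental cycles of $\Sh_\Hb(V)$ rather than with $[Y_V^\circ]$. Concretely, the factors $c_i$, $e_i$, and the sum over $B_i$ all disappear, leaving only the mixed degree.

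First I would reduce to computing the mixed Hecke correspondences term by term. By the Mackey property of $\iota_*$ (as in Lemma \ref{mixedone}), we have
\[
[K' \varsigma K]_* \circ \iota_{U, K', *} = \sum_{i \in I} [U \varsigma_i K]_*
\]
as maps $N_R(U) \to M_R(K)$, and since $x_{K'}(1) = \iota_{U,K',*}([\Sh_\Hb(U)])$ by Definition \ref{ykg}, it suffices to show that
\[
[U\varsigma_i K]_*\bigl([\Sh_\Hb(U)]\bigr) = \deg[U\varsigma_i K]_* \cdot x_K(\varsigma_i)
\]
for each $i \in I$.

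Next I would unravel $[U\varsigma_i K]_* = [\varsigma_i]_* \circ \iota_{U_i, \varsigma_i K \varsigma_i^{-1}, *} \circ \pr^*_{U_i, U}$ and compute the effect of each factor on $[\Sh_\Hb(U)]$. Since $\pr_{U_i, U} \colon \Sh_\Hb(U_i) \to \Sh_\Hb(U)$ is finite étale, its flat pullback sends the fundamental cycle to the fundamental cycle, so $\pr^*_{U_i, U}([\Sh_\Hb(U)]) = [\Sh_\Hb(U_i)]$ (this is essentially the reason there are no component-tracking corrections here, in contrast with Lemma \ref{mixedthree}). Now factor $\iota_{U_i, \varsigma_i K \varsigma_i^{-1}, *} = \iota_{H_i, \varsigma_i K \varsigma_i^{-1}, *} \circ \pr_{U_i, H_i, *}$ using functoriality of the pushforward (Definition \ref{pushforward}). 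Because $N$ is cohomological (Lemma \ref{allgoodlemma}) and $[\Sh_\Hb(U_i)] = \pr^*_{U_i, H_i}([\Sh_\Hb(H_i)])$, we obtain
\[
\pr_{U_i, H_i, *}\bigl([\Sh_\Hb(U_i)]\bigr) = [H_i : U_i] \cdot [\Sh_\Hb(H_i)] = \deg[U\varsigma_i K]_* \cdot [\Sh_\Hb(H_i)],
\]
using the very definition of the degree of a mixed Hecke correspondence.

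Finally, applying $[\varsigma_i]_* \circ \iota_{H_i, \varsigma_i K \varsigma_i^{-1}, *}$ to $[\Sh_\Hb(H_i)]$ yields exactly $x_K(\varsigma_i)$ by the definition of $x_K(\varsigma_i)$ in Definition \ref{ykg}. Summing over $i \in I$ gives the claimed formula. The only mild obstacle in formalizing this is keeping the functoriality of the pushforward straight when factoring through $H_i$, but since $U_i \subset H_i \subset H \cap \varsigma_i K \varsigma_i^{-1}$ and the relevant compatibility is built into Definition \ref{pushforward}, this is routine and no Baire-category or connected-component argument is needed.
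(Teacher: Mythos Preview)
Your proof is correct and is essentially the computation the paper has in mind, just unpacked. The paper packages the same observation more tersely: since the fundamental cycles $[\Sh_{\Hb}(V)]$ pull back to fundamental cycles and push forward to degree-multiples of fundamental cycles, they realize the trivial RIC functor $N_{\mathrm{triv},R}$ (with $N_{\mathrm{triv},R}(V)=R$ for all $V$), so $x_K(g)$ is the image of $1_R$ and the claim follows immediately from the analog of Lemma~\ref{mixedone} and diagram~(\ref{mixedtwo}) with $N_R$ replaced by $N_{\mathrm{triv},R}$. Your explicit step-by-step verification that $\pr^*$ preserves the fundamental cycle and $\pr_*$ multiplies it by the index is exactly what justifies that identification, so the two arguments differ only in presentation.
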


\begin{proof}  Let $ N_{\mathrm{triv}, R} : \mathcal{P}(H,  \Upsilon_{H})  \to R\text{-Mod} $ denote functor associated with the trivial representation of $H  $ (see Remark  \ref{RICremark}), so that  $ N_{\mathrm{triv}, R}(V) = R $ for all $ V $.  Since  fundamental  cycles of  $ \mathrm{Sh}_{\Hb}(V)$ for varying $  V \in \Upsilon_{H} $  map to  themselves under pullbacks and to multiples by degree under pushforwards 
along degeneracy maps, they realize the trivial functor $ N_{\mathrm{triv},R} $. The class $ x_{K}(g) $ can then be defined as the image of $ 1_{R} \in R = N_{\mathrm{triv}}(H_{g}) $ under the analogous twisting map in Definition \ref{ykg}. The claim easily follows by the obvious analog of Lemma \ref{mixedone} and the  diagram (\ref{mixedtwo}) with $ N_{R} $ replaced by $ N_{\mathrm{triv}, R} $.   
\end{proof}

\begin{remark} Proposition \ref{xKgdescent}   holds  without the assumption  that $ \mathbf{H}^{\mathrm{der}}$ be simply-connected, since the definition of $ x_{K}(-)$ etc.,  does not     rely on a description of the connected components of $ \Sh_{\Hb} $.   One may also drop (SV3) for $ (\Hb, X_{\Hb})$  in light of \cite[Appendix A]{Anticyclo}, which extends the formalism of Shimura varieties in the absence of (SV3), \emph{assuming} that  $ (\Hb, X_{\Hb}) $ embeds into a data which does satisfy (SV3).  In our case, this latter data is $ (\Gb, X_{\Gb}) $.            
\end{remark}

If we specialize  $ K'  = g  K  g^{-1} $ and $ \varsigma = g \sigma  $ in Theorem  \ref{descent} and invoke Lemma  \ref{simp0},     we  obtain 
the  following.

\begin{corollary}   \label{ykgcoro} For $ K \in \Upsilon_{G} $ and $ g ,   \sigma \in G $, denote $ U = H \cap  g K  g^{-1}    $ and $ I = U  \backslash g K \sigma K / K $.   For each  $  i  \in  I $,  let $ \varsigma _ { i } \in  G $ denote a representative for $ i   $ and denote $ U_{i} = U \cap \varsigma_{i} K \varsigma_{i}^{-1} $, $ H_{i} = H \cap  \varsigma_{i}  K   \varsigma_{i} ^{-1}  $ and  $ \deg [U \varsigma_{i} K ] _{*} =  [H_{i} : U_{i}] $. For each $ i $, let $ A_{i} \subset H $ denote a set of representatives for $ \ker ( \varphi_{U_{i}, U}   )     $, $ B_{i} \subset  A_{i} $ denote a set of representatives for $ \varphi_{U_{i}, H_{i}}( \ker ( \varphi_{U , U_{i}})   ) $ and set $ c_{i}  =   |A_{i}| /   |B_{i}  |  $,   $ e_{i} = | \ker (\varphi_{U_{i}, H_{i}}  )| $.    Then  $$ [ K \sigma  K ]_{*}  \cdot  y_{K}(g)  =  \sum_{i \in I  }  \sum_{ h \in B_{i}  }  c_{i  }    \,   e_{i}  ^{-1}      \deg \, [U  \varsigma_{i}  K ]_{*}  \cdot  y_{K} ( h  \varsigma_{ i } )   . $$
\end{corollary}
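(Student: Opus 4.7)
My plan is to deduce this corollary directly from Theorem~\ref{descent} together with the reduction step in Lemma~\ref{simp0}. The theorem computes $[K'\varsigma K]_{*}\cdot y_{K'}(1)$ for arbitrary $K'\in\Upsilon_{G}$ and $\varsigma\in G$, while Lemma~\ref{simp0} translates the problem of computing $[K\sigma K]_{*}\cdot y_{K}(g)$ into precisely such a computation. So the task reduces to verifying that, under the right substitution, the indexing data in the theorem reproduces that of the corollary.

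Concretely, I would set $K':=gKg^{-1}$ and $\varsigma:=g\sigma$. Lemma~\ref{simp0} immediately yields
\[
[K\sigma K]_{*}\cdot y_{K}(g) \;=\; [K'\varsigma K]_{*}\cdot y_{K'}(1).
\]
Next I would check the matching of notation. The subgroup $U=H\cap K'=H\cap gKg^{-1}$ coincides with the $U$ in the statement of the corollary. For the outer indexing set, one has the trivial identity $K'\varsigma=gKg^{-1}\cdot g\sigma=gK\sigma$, so that
\[
U\backslash K'\varsigma K/K \;=\; U\backslash gK\sigma K/K \;=\; I,
\]
agreeing with the corollary's indexing set. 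Once coset representatives $\varsigma_{i}$ are chosen, the derived data $U_{i}=U\cap\varsigma_{i}K\varsigma_{i}^{-1}$, $H_{i}=H\cap\varsigma_{i}K\varsigma_{i}^{-1}$, $A_{i}$, $B_{i}$, $c_{i}$, $e_{i}$ are manifestly defined by the same recipe in both places, as is the mixed degree $\deg[U\varsigma_{i}K]_{*}=[H_{i}:U_{i}]$. Substituting into the conclusion of Theorem~\ref{descent} then gives exactly the formula
\[
[K\sigma K]_{*}\cdot y_{K}(g) \;=\; \sum_{i\in I}\sum_{h\in B_{i}} c_{i}\,e_{i}^{-1}\,\deg[U\varsigma_{i}K]_{*}\cdot y_{K}(h\varsigma_{i}).
\]

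Since the substantive combinatorics is already carried out in Theorem~\ref{descent}, there is no genuine obstacle here; the proof is a short verification that the substitution $K'\mapsto gKg^{-1}$, $\varsigma\mapsto g\sigma$ is compatible with the definitions of all auxiliary objects. The only minor subtlety is the cosmetic simplification $K'\varsigma K=gK\sigma K$, which ensures that the reduction via Lemma~\ref{simp0} lands precisely in the form to which the theorem applies.
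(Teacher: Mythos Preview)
Your proposal is correct and follows exactly the paper's own approach: the corollary is obtained by specializing Theorem~\ref{descent} to $K'=gKg^{-1}$ and $\varsigma=g\sigma$ and invoking Lemma~\ref{simp0}, with the identification $K'\varsigma K=gK\sigma K$ giving the claimed indexing set.
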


We    can finally  answer Question \ref{mainquestion} now. Recall from Lemma  \ref{dg} that when $ M_{R} = \mathcal{C}^{n} $ (and $ R = \ZZ$), there  exists for each $ \tau \in G $  a  unique positive integer $ d_{\tau,  K } $  such that  $ y_{K}(\tau)  =  d_{\tau, K }   \cdot     z_{K}(\tau) $ as elements of $ \mathcal{C}^{n}(K) $. 
\begin{corollary}   \label{zkgcoro}  With notations as  in  Corollary  \ref{ykgcoro},   we  have   
\begin{equation}     \label{zkgcoroexp}     [K \sigma  K ]_{*}   \cdot  z_{K}( g )   =   \frac{1}{d_{g,  K }         }         \sum_{  i  \in    I}  \sum _{   h \in   B_{i}    }       c_{i}  \,  d_{ h \varsigma_{i} , K } \,   e_{i}^{-1}      \deg  \,  [ U  \varsigma _{i}    K  ]_{*}  \cdot   z_{K}(   h    \varsigma _{i} )   
\end{equation}    
In particular,   $    [ K  \sigma  K ]_{*} \cdot z_{K}(g) $ lies in the submodule of $ \mathcal{C}^{n}(K) $ spanned by irreducible special  cycles.   
\end{corollary}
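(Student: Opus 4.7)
The plan is to deduce this corollary as an essentially formal consequence of Corollary \ref{ykgcoro} and Lemma \ref{dg}, by specializing to $R = \ZZ$, $M_R = \mathcal{C}^n$ and $\iota_* = \mathrm{cyc}_*$. Nothing new about Shimura varieties is needed: Lemma \ref{allgoodlemma} has already verified that $(\mathcal{C}^n, \mathrm{cyc}_*)$ fits the CoMack/Mackey hypotheses under which Corollary \ref{ykgcoro} was proved, so Corollary \ref{ykgcoro} applies verbatim and gives the identity
\begin{equation*}
[K\sigma K]_* \cdot y_K(g) \;=\; \sum_{i \in I} \sum_{h \in B_i} c_i \, e_i^{-1}\, \deg[U\varsigma_i K]_* \cdot y_K(h\varsigma_i)
\end{equation*}
in $\mathcal{C}^n(K)$.

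Next I would substitute, via Lemma \ref{dg}, $y_K(\tau) = d_{\tau,K}\cdot z_K(\tau)$ for each $\tau \in G$ appearing above, and pull the constant $d_{g,K}$ out on the left by $\ZZ$-linearity of $[K\sigma K]_*$. This converts the previous display into
\begin{equation*}
d_{g,K}\cdot [K\sigma K]_* \cdot z_K(g) \;=\; \sum_{i \in I} \sum_{h \in B_i} c_i\, d_{h\varsigma_i,K}\, e_i^{-1}\,\deg[U\varsigma_i K]_* \cdot z_K(h\varsigma_i),
\end{equation*}
an equality in $\mathcal{C}^n(K)$. Since $\mathcal{C}^n(K)$ is a free abelian group, hence torsion-free, passing to $\mathcal{C}^n(K) \otimes_{\ZZ} \QQ$ and dividing by the positive integer $d_{g,K}$ produces the displayed expression (\ref{zkgcoroexp}).

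For the ``in particular'' clause, the task is to upgrade an a~priori rational equality to an integral statement inside $\mathcal{C}^n(K)$. Here I would use that $\mathcal{C}^n(K)$ has a canonical $\ZZ$-basis given by all codimension-$n$ irreducible closed subvarieties of $\mathrm{Sh}_{\Gb}(K)$, and the distinct cycles $z_K(\tau)$ form a subset of this basis. The left-hand side $[K\sigma K]_* \cdot z_K(g)$ lies in $\mathcal{C}^n(K)$; if it had a nonzero coefficient on any basis element outside the $\ZZ$-span of special cycles, that coefficient would survive multiplication by $d_{g,K}$, contradicting the preceding display whose right-hand side is manifestly in the $\ZZ$-span of special cycles. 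Hence $[K\sigma K]_*\cdot z_K(g)$ itself lies in this span. The only ``obstacle'' is really notational: carefully tracking that the combinatorial data $I, U_i, H_i, A_i, B_i, c_i, e_i$ are precisely those imported from Corollary \ref{ykgcoro}, which is just a matter of unwinding the specialization $K' = gKg^{-1}$, $\varsigma = g\sigma$ already built into the statement.
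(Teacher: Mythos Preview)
Your proposal is correct and follows essentially the same approach as the paper: specialize Corollary~\ref{ykgcoro} to $R=\ZZ$, $M_R=\mathcal{C}^n$, $\iota_*=\mathrm{cyc}_*$ via Lemma~\ref{allgoodlemma}, substitute $y_K(\tau)=d_{\tau,K}\,z_K(\tau)$ from Lemma~\ref{dg}, and divide by $d_{g,K}$. The ``in particular'' clause is also argued the same way---the paper phrases it as $[K\sigma K]_*\cdot z_K(g)\in \mathcal{C}^n(K)\cap(\mathcal{Z}_K\otimes_\ZZ\QQ)=\mathcal{Z}_K$, which is exactly your basis-coefficient argument repackaged.
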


\begin{proof}   The   expression follows  from  Corollary  \ref{ykgcoro} after specializing to $ R  =   \ZZ   $,  $ M_{R} =  \mathcal{C}^{n} $ and $   \iota _  { *  }  =  \mathrm{cyc}_{*}  $ (which we can do  by   
Lemma  \ref{allgoodlemma}).   
Since $ [ K \sigma  K ]_{*} \cdot z_{K}(g) $ belongs to both $ \mathcal{C}^{n}(K) $ (by definition) and  $ \mathcal{Z}_{K} \otimes_{\ZZ}  \QQ $ by (\ref{zkgcoroexp}),  it must lie    
inside   $ \mathcal{Z}_{K} $ (where $   \mathcal{Z}_{K}  $ is as  in   Definition      \ref{zkg}).     
\end{proof}    
\begin{remark}   Note that the  coefficients of individual summands 
in (\ref{zkgcoroexp}) are not  apriori integers, since we do not know if  $ d_{g, K} $ divides the  product   $c_{i}  \,  d_{ h \varsigma_{i}}  \,   e_{i}^ {  -  1   }       \deg [U  \varsigma_{i} K]_{*} $ for all $  i \in I $,    $  h    \in  B_{i}    $.   The point is that the    coefficients   in our  expression     can be made integral (if not already)  by collecting  together  the coefficients of all $ z_{K}(h \varsigma_{i}) $ that represent  the  same  irreducible cycle in $ \Sh_{\Gb}(K)  $.  
\end{remark}     

\subsection{Examples} 
Below,  we  record two simple   instances  in which  the  RHS of (\ref{zkgcoroexp})  matches that of    (\ref{falsezkg}).  The notations above  are  maintained.       
\begin{example}   \label{mixedCM}        Suppose $ \Hb    $ is a torus. In this case, we are asking for Hecke action on special points on $ \Sh_{\Gb}(K) $.  
We have $ \Hb =  \Tb $ and $ \nu  $ is  the   identity     map.    Our assumption on $ \Upsilon_{H} $ imply that $ A_{i} $ identifies with $ 
U / U_{i}  $ and $ e_{i}   
 = [H_{i}: U_{i} ]   =   \deg \, [U \varsigma_{i} K ]_{*}  $.       Moreover,  have $ d_{\tau, K}   = 1 $ for all $  \tau   \in G    $,     since  $ \Sh_{\Hb}(H_{\tau}) $ is a finite set of   reduced     points over $ \overline{\QQ} $.  
Thus \begin{align*}  [K  \sigma  K]_{*}   \cdot  z_{K}(g)   &   =    \sum_{i \in  I }  \sum _{   h   \in  B_{i}    }  c_{i} 
\cdot   z_{K}( h  \varsigma _{i} )  = \sum _ { i \in I } \sum _ { h  \in A_{i} } z_{K} (  h  
 \varsigma _{i} ) 
\end{align*} 
Now for each $ i \in I $, we have $ U \varsigma_{i} K / K = \bigsqcup _ { h 
 \in  A_{i} } h \varsigma _{i} K $  and therefore  
$ g K   \sigma    K / K = \bigsqcup_ { i \in I, h \in A_{i } } h \varsigma_{i } K $. So   
$$  [K  \sigma    K ] _{ *}     \cdot     z_{K}( g ) =  \sum   \nolimits    _ { \gamma \in K   \sigma     K / K }  
z_{K}( g \gamma )   .  $$
which agrees with   (\ref{falsezkg}).    This  is  of course what one gets by  directly computing the result of a  correspondence on a general zero-cycle on $  \Sh_{\Gb}(K)  $  as in   (\ref{CM1}).     
\end{example}  
\begin{example}  Suppose  that  $ \mathbf{H} = \Gb $.  In  this  case,   we  are asking for Hecke action on connected components of $ \Sh_{\Gb}(K) $  itself.    We   have    $    U    = g K  g    ^{-1}  $, so that $ I = \left \{ 1  \right \} $ is  a  singleton and we may take   $ \varsigma_{1} = g   \sigma     $. By definition,  we  have   $   \deg \,  [ U  \varsigma _{1} K]  _ { *  }   =  [ \varsigma_{1} K \varsigma_{1}^{-1} :  U_{\varsigma_{1}   }  ] $,  which equals $  [K : \varsigma_{1}^{-1} U_{\varsigma_{1}} \varsigma_{1} ]$.  Since $ \varsigma_{1}  ^{-1}  U   _  {  \varsigma  _  {  1  }    }    \varsigma_{1}^{-1} =  \sigma^{-1} K \sigma \cap K $,  we see that  $$ \deg \, [  U  \varsigma_{1}   K  ] _{*}  =   |  K  \sigma^{-1} K  / K  |   .          $$  
Since $ \nu(  U ) = \nu(K)  =  \nu  ( H_{\varsigma_{1}   } ) $,  we  have  
$ c_{1} =  e_{1}$ and $ B_{1} = \left \{ 1 \right \} $ 
a singleton by
 Lemma    \ref{Bilemma}.  Clearly,  $  d_{\tau, K } = 1 $ for all $  \tau   \in G $.  Therefore $$ [K  \sigma  K ]_{*} \cdot z_{K}(g)  =  | K  \sigma^{-1}  K / K  |   \cdot  z_{K}( g  \sigma   )  .  $$
Now $ | K  \sigma ^ { - 1 }  K / K |  =  | K   \sigma    K / K | $ by unimodularity  of $ G $ \cite[p.\ 58]{Renard}   
and $ z_{K}(g \gamma \sigma ) = z_{K}( g   \sigma  ) $ for any $ \gamma \in K $ since $ \pi_{0, K } =  \pi_{0}  (   \Sh_{\Gb}(K)  )  $ is  abelian.      So   $$  [K  \sigma K ]_{*} \cdot  z_{K}(g)  =  \sum _ { \gamma \in K  \sigma    K / K } z_{K}( g \gamma)   ,   $$   which  again  agrees    with (\ref{falsezkg}). 
   
\end{example}

\section{Counterexamples}    
In this section, we furnish two  (families of)    examples   where      (\ref{falsezkg})  fails  to     hold. 
\label{mainfalseexamplessec}
\subsection{Counting cycles} 

Since irreducible special cycles form a $ \ZZ $-module  basis of $ \mathcal{Z}_{K} $ by definition,     we see that   (\ref{falsezkg})    holds   only  if    $$  | K \sigma K / K |   \stackrel{?}{=}    
   \frac{1}{d_{g, K }   }    \sum _ {  i   \in   I }   \sum_{ h \in B_{i} }    c_{i} \,  d_{h \varsigma_{i} ,   K    }\, e_{i}^{-1} 
 \deg   \,     [ U  \varsigma_{i}   K  ]_{*}  . $$ 
 Indeed, the RHS  above is the number of basis elements in the RHS of  (\ref{zkgcoroexp}).   The strategy is therefore  to compute both these integers explicitly and show they are not equal.   We will pick $ K$  and $ \sigma $ so that the various $ d_{-, K 
 }    $ appearing above are forced to be $ 1 $.    For the computation of mixed degrees, we rely on the calculations done in  \cite{AES}, though   alternatives are also provided  for the reader and the computations are mostly  self-contained.          In both our examples, we will have $ g = 1_{G}  $ (so $ U = H \cap K )$ and the letter $ g $ will be used for  other purposes.  For this reason, we denote the representatives  $ \varsigma_{i} $ for $   i  \in   U  \backslash  K  \sigma   K   /   K   $ by $ \sigma_{i} $.   In this notation, we  are interested in checking  if       
\begin{equation}   \label{mainfalse}       |K \sigma K / K |   \stackrel{?}{=}   \frac{1}{d_{1_{G}, K } }   
\sum _{i \in I} \sum_{h \in B_{i}}  c_{i} \,   d_{h  \sigma_{i} ,  K }  \,  e_{i}^{-1} 
\deg [U   \sigma    _{i} K ]_{*}. 
\end{equation}   
Throughout \S \ref{mainfalseexampleone} and \S \ref{mainfalseexampletwo},  $ \ell $ denotes a rational  prime and $ \mathbb{A}_{f}^{\ell} =  \mathbb{A}_{f}/\QQ_{\ell}  $ denotes    the  group   of   finite        rational  adeles  away from $  \ell    $.  
\begin{remark} The counterexamples below also work for if $ z_{K}(1) $ is replaced by  $ x_{K}(1) $.    
\end{remark}

\subsection{Symplectic groups}    \label{mainfalseexampleone}  We  let    $$  \Hb = \GL_{2} \times _{\GG_{m}}  \GL_{2} , \quad \quad  \Gb = \mathrm{GSp}_{4}  ,  $$
which we consider as reductive group schemes  over  $ \ZZ $. 
Here, we define  $ \Gb $   with respect to the standard  symplectic   matrix which has the identity matrix in the top right $ 2 \times 2 $ block, negative identity in the bottom left $ 2 \times 2 $ block  and zeros elsewhere.   The embedding of $ \Hb $ in $ \Gb $ is    as  in   \cite[\S 9.3]{AES}, which gives a morphism of Shimura data (see \cite[\S 6]{Milne}). Moreover,  both data satisfy (SV1)-(SV6) of \cite{Milne}  by  the  discussion  in  \S 6 of  \emph{op.cit}.  In  addition,    the derived group of $ \Hb $ is $ \SL_{2} \times \SL_{2} $,  which  is  simply  connected. Finally, our assumption on the existence of $ \Upsilon_{H} $ is also  satisfied,  since  the data for $ \Hb $  satisfies  (SV5).    Thus Corollary  \ref{zkgcoro}  applies to this embedding of Shimura data. Let  us   denote  $$  w \colonequals \mathrm{diag}(1,-1,1,-1)  \in \mathbf{Z}_{\Hb}(\QQ)  .   $$     
Then the centralizer of $ w $ in $ \Gb $  equals   $    \Hb $. We have $ \Tb = \Hb/ \Hb^{\mathrm{der}} =   \mathbb{G}      _{m} $ and $ \nu \colon \Hb \to \Tb $ is  the  map given  by  taking  the common determinant of the two components.   As above, we denote  $ G = 
     \Gb(\Ab_{f}) $ and $ H  =  \Hb ( \Ab_{f} )  $. 
For $ N \geq 1 $,  we let $ K(N) $ denote the  principal congruence subgroup of $ G $  level $ N $. Then  $ K(N) $ is neat if $ N \geq 3 $. Now say $ K $ is a  compact open subgroup contained in $ K(N) $ for some $ N \geq 3 $.  
Since $ w \in K(1) =   \Gb    (\widehat{\ZZ}) $ and $ K(N) \trianglelefteq K(1) $, we have $ w K w ^{-1} \subset w K(N) w^{-1}  = K (N) $. Thus $$ \iota_{U,K}  \colon \Sh_{\Hb}(U)  \to   \Sh_{\Gb}(K)          $$    is a closed immersion  by Lemma  \ref{closedimmersion}. We fix such a  $ K $ from now on and denote as above $ U  \colonequals H \cap K $, etc.   We let  $ \ell $  be  a  rational  prime  such that $ K $ is unramified at $ \ell $, i.e.,    $ K = K_{\ell }  K^{\ell} $ where $ K_{\ell} =  \mathrm{GSp}_{4} ( \ZZ_{\ell}  )  $   and    $ K^{\ell}  \subset \mathrm{GSp}_{4}(\Ab_{f}^{\ell}) $ is a compact open subgroup.    Then $ U = U_{\ell} U^{\ell} $ with $U_{\ell} = \Hb(\ZZ_{\ell} ) $. 
Set      $$ \tau _{\ell}   \colonequals  \begin{pmatrix}    1   &    &  &   \scalebox{1.1}{$\frac{1}{\ell}$}     \\&  1 &   \scalebox{1.1}{$\frac{1}{\ell}$}    \\[0.5em] & &   1 & \\& & &   1 \end{pmatrix} \in \Gb(\QQ_{\ell}) $$
and denote by $ \tau $ the image of $ \tau_{\ell} $ under the  embedding  $ \Gb(\QQ_{\ell}) \hookrightarrow   \Gb(\Ab_{f}    )   $, so that  $ \tau  =  \tau^{\ell}   \tau_{\ell}  $  where $ \tau^{\ell}  \in  \Gb(\Ab_{f}^{\ell} )  $ is   identity.  The  convention   introduced  in   Notation  \ref{nota3} is  maintained.

\begin{lemma} \label{connC}  If  $ \ell =  2 $,   the  morphism     $ \iota \colon \Sh_{\Hb}( H_{\tau} ) \to  \Sh_{\Gb}(\tau K \tau^{-1}) $ is a closed immersion.    
\end{lemma}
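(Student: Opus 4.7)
The plan is to apply Lemma \ref{closedimmersion} with ``$K$'' of that lemma taken to be $\tau K \tau^{-1}$, retaining the same central element $w \in \mathbf{Z}_{\Hb}(\QQ)$; both hypotheses---(SV5) for $(\Gb, X_{\Gb})$ and the fact that $\Hb$ is the centralizer of $w$ in $\Gb$---are already in place. It therefore suffices to exhibit a neat compact open subgroup $L \subset G$ containing both $\tau K \tau^{-1}$ and $w \tau K \tau^{-1} w^{-1}$: since $H \cap \tau K \tau^{-1} = H_{\tau}$ by Notation \ref{nota2}, the lemma will then deliver the desired closed immersion.

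My candidate is $L \colonequals \tau K(N) \tau^{-1}$. The hypothesis $K = K_{\ell} K^{\ell}$ with $K_{\ell} = \Gb(\ZZ_{\ell})$ forces $\ell \nmid N$ (otherwise $K(N)_{\ell}$ would be strictly smaller than $\Gb(\ZZ_{\ell})$), and hence $K(N)_{\ell} = K_{\ell}$. Neatness of $L$ is automatic: $K(N)$ is neat for $N \geq 3$, and conjugation in $G$ preserves eigenvalues at every place and thus preserves neatness. The inclusion $\tau K \tau^{-1} \subset L$ is immediate from $K \subset K(N)$.

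It remains to verify $w \tau K \tau^{-1} w^{-1} \subset L$ place by place. Away from $\ell$, $\tau$ is trivial, so the claim reduces to $w^{\ell} K^{\ell} (w^{\ell})^{-1} \subset K(N)^{\ell}$; this is clear since $w$ has integer entries (so $w^{\ell} \in \Gb(\widehat{\ZZ}^{\ell})$) and $K(N)^{\ell}$ is normal in $\Gb(\widehat{\ZZ}^{\ell})$. The heart of the matter is at the place $\ell$. A direct matrix computation yields $w \tau_{\ell} w^{-1} = \tau_{\ell}^{-1}$; combined with $w \in K_{\ell}$, this gives $w(\tau_{\ell} K_{\ell} \tau_{\ell}^{-1}) w^{-1} = \tau_{\ell}^{-1} K_{\ell} \tau_{\ell}$, and the desired inclusion becomes $K_{\ell} \subset \tau_{\ell}^{2} K_{\ell} \tau_{\ell}^{-2}$, which holds as soon as $\tau_{\ell}^{2} \in K_{\ell}$. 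Writing $\tau_{\ell} = I + E$ with $E$ supported in positions $(1,4)$ and $(2,3)$ with entry $1/\ell$, one finds $E^{2} = 0$ and hence $\tau_{\ell}^{2} = I + 2E$, whose off-diagonal entries $2/\ell$ are $\ell$-integral precisely when $\ell = 2$. This last divisibility is the only point where the hypothesis $\ell = 2$ enters, and identifying it is the chief (quite modest) obstacle in the argument; everything else is structural and would go through identically for any $\ell$ with $\tau_{\ell}^{2} \in K_{\ell}$.
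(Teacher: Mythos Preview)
Your proof is correct and follows essentially the same approach as the paper: both apply Lemma~\ref{closedimmersion} to $\tau K \tau^{-1}$ using the conjugate $\tau L \tau^{-1}$ of a neat group $L$ containing $K$ and $wKw^{-1}$ (you take $L = K(N)$ explicitly, the paper leaves $L$ abstract), and both reduce the $\ell$-local check to a matrix having off-diagonal entries $2/\ell$. The only cosmetic difference is that you exploit the identity $w\tau_{\ell}w^{-1}=\tau_{\ell}^{-1}$ to phrase the criterion as $\tau_{\ell}^{2}\in K_{\ell}$, whereas the paper directly computes $\tau_{\ell}^{-1}w_{\ell}\tau_{\ell}$ and checks it lies in $K_{\ell}$; these are equivalent since $w_{\ell}\in K_{\ell}$.
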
     
\begin{proof}  Let $  L  = L_{\ell} L^{\ell}  $ be a compact open subgroup of $ G $  
that  contains  both  $ K $ and $ wKw $. Clearly $ L_{\ell} = K_{\ell} $ by maximality of $ K_{\ell}  $.    
Write $ w = w_{\ell} w^{\ell} $ and set $  L '   \colonequals \tau  L  \tau^{-1} $. Then $ L' $ is neat and contains $ \tau K \tau  ^ { - 1  }   $.    By  Lemma  \ref{closedimmersion}, $ \iota = \iota_{H_{\tau},  \tau K \tau^{-1} } $ is a closed immersion whenever $ L'  $ contains $ w \tau K  \tau^{-1} w $. 
Note  that   $$   L '   = L ^{\ell}  \cdot    L'_ {\ell} \quad  \text{ where }  \quad L'_{\ell}   =  \tau_{\ell} K   _  {  \ell   }      \tau_{\ell}^{-1} . $$ Now 
$ L^{\ell}  $ 
contains 
$  w ^ { \ell } \tau^{\ell}   K^{\ell} \tau^{\ell}  w ^ { \ell }      = w^{\ell} K^{\ell} w^{\ell}  $ by  our  choice. 
So $ L' $ contains  
$ w \tau K \tau^{-1} w $ if and only if $ \tau_{\ell} K_{\ell} \tau_{\ell} ^{-1}  $ contains (and hence equal to)       $  w _{\ell}  \tau_{\ell} K_{\ell} \tau_{\ell}^{-1}   w_{\ell}  $,    
i.e., when    $$ \gamma_{\ell} \colonequals  \tau_{\ell}^{-1} w_{\ell}  \tau_{\ell} =   \left(\begin{matrix}
1 & &  & \scalebox{1.1}{$\frac{2}{\ell}$}     \\ 
 & -1 &    -   \scalebox{1.1}{$\frac{2}{\ell}$}     &  \\[0.5em]    
 &  & 1 &  \\
 & &  & -1
\end{matrix}\right)   
$$
normalizes $ K_{\ell}     $. This is true  
for $ \ell = 2 $, where we  even   have     $ \gamma_{\ell}  \in K_{\ell} $.       
\end{proof}

\begin{lemma}   \label{B_{i}}   For all   $ \eta \in \left \{ h , h \tau \, | \, h \in H \right \} $, we have $  \nu(H_{\eta}) = \nu(U) $.   
\end{lemma}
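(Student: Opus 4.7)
The plan is to reduce the statement to the single equality $\nu(H_\tau) = \nu(U)$, and then verify that equality by an explicit computation in the $\ell$-component.

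First, I would handle the two types of $\eta$ by direct conjugation. For $\eta = h \in H$, an element $x$ lies in $H_h = H \cap hKh^{-1}$ iff $h^{-1}xh \in H \cap K = U$, so $H_h = hUh^{-1}$; since $\mathbf{T}=\mathbb{G}_m$ is abelian, $\nu(H_h) = \nu(h)\nu(U)\nu(h)^{-1} = \nu(U)$. For $\eta = h\tau$ with $h \in H$, the same manipulation gives $H_{h\tau} = h\, H_\tau\, h^{-1}$, hence $\nu(H_{h\tau}) = \nu(H_\tau)$. So the entire lemma reduces to showing $\nu(H_\tau) = \nu(U)$.

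Next, since $\tau = \tau_\ell \cdot 1^\ell$, at every prime $p \neq \ell$ we have $(H_\tau)_p = H(\mathbb{Q}_p) \cap K_p = U_p$, so the ``away from $\ell$'' contributions to $\nu(H_\tau)$ and $\nu(U)$ agree automatically. At $\ell$ we have $\nu(U_\ell) = \nu(\mathbf{H}(\mathbb{Z}_\ell)) = \mathbb{Z}_\ell^\times$ (using that $\nu$ surjects the determinant of each $\mathrm{GL}_2$ factor onto $\mathbb{Z}_\ell^\times$), while $\nu((H_\tau)_\ell) \subset \mathbb{Z}_\ell^\times$ by compactness. So it suffices to exhibit, for every $u \in \mathbb{Z}_\ell^\times$, an element of $(H_\tau)_\ell$ with $\nu$-image $u$.

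For this last step, I would take $M = \mathrm{diag}(u,1,1,u)$. A short computation, using that $\tau_\ell^{-1}\,\mathrm{diag}(a,b,c,d)\,\tau_\ell = \mathrm{diag}(a,b,c,d) + \tfrac{a-d}{\ell}e_{14} + \tfrac{b-c}{\ell}e_{23}$, shows $\tau_\ell^{-1}M\tau_\ell = M$, so $M$ commutes with $\tau_\ell$ and thus lies in $\tau_\ell K_\ell \tau_\ell^{-1}$ iff it lies in $K_\ell$. The similitude condition $ac = bd$ for diagonal matrices becomes $u = u$, confirming $M \in \mathrm{GSp}_4(\mathbb{Z}_\ell)$. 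Finally, under the embedding of \cite[\S 9.3]{AES} (which identifies the two $\mathrm{GL}_2$ factors with the action on the hyperbolic planes $\langle e_1,e_3\rangle$ and $\langle e_2,e_4\rangle$), $M$ corresponds to the pair $(\mathrm{diag}(u,1),\mathrm{diag}(1,u))$, both of determinant $u$, so $M \in \mathbf{H}(\mathbb{Z}_\ell)$ with $\nu(M) = u$. Hence $M \in (H_\tau)_\ell$ realizes $u$ as needed, giving $\nu((H_\tau)_\ell) = \mathbb{Z}_\ell^\times = \nu(U_\ell)$ and completing the proof.

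The only real obstacle is keeping the embedding conventions straight to verify the last point that $M$ actually lies in $\mathbf{H}$, not just in the diagonal torus of $\mathrm{GSp}_4$; once the correspondence between diagonal elements of $\mathbf{H}$ and the ``interleaved'' diagonals $\mathrm{diag}(a,b,c,d)$ with $ac = bd$ is unravelled, the rest is formal.
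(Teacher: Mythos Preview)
Your proof is correct and follows essentially the same approach as the paper's: both reduce via conjugation to the single case $\eta = \tau$, observe that the away-from-$\ell$ components of $H_\tau$ and $U$ coincide, and then verify $\nu((H_\tau)_\ell) = \ZZ_\ell^\times$ by working inside the diagonal torus of $\mathrm{GSp}_4(\QQ_\ell)$. The only cosmetic difference is that the paper describes the full intersection $A^\circ_{\tau_\ell} = \{\mathrm{diag}(a,b,c,d) : a,b,c,d \in \ZZ_\ell^\times,\; ac = bd,\; a-d, b-c \in \ell\ZZ_\ell\}$ and notes $\nu(A^\circ_{\tau_\ell}) = \ZZ_\ell^\times$, whereas you simply exhibit the specific witnesses $M = \mathrm{diag}(u,1,1,u)$ inside this torus; your argument is slightly more direct for the same content.
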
 
\begin{proof} 
For any $ h  \in H $ and $ \eta \in G $, we have $ \nu(H_{h\eta}) = \nu(H_{\eta}) $ as     $ H_{h\eta} =  h H_{\eta} h^{-1} $.    
So it  suffices restrict   attention  to $ \eta \in \left \{ 1_{G}, \tau \right \}  $. 
The case $  \eta   = 1_{G}  $ is  trivial  and  the  argument for $ \eta =  \tau   $ is as  follows.    Let $  \mathbf{A}  \subset \Gb_{\QQ_{\ell}} $ denote the   maximal     diagonal  torus  and  let $  A_{\ell} \colonequals   \mathbf{A}(\QQ_{\ell} ) $,  $ A^{\circ}_{\tau_{\ell} }  =  A _{\ell}    \cap \tau    _ { \ell  }  K_{\ell} \tau _ {  \ell  }    ^{-1} $. Then $ \mathrm{diag} (a,b,c,d) \in A_{\tau_{\ell}}^{\circ} $ iff $$ a,b,c,d \in \ZZ_{\ell}^{\times},  \,  ac = bd,      a - d , b - c \in \ell \ZZ_{\ell} .  $$ 
It is then easily seen that $ \nu( A^{\circ}_{\tau_{\ell} }) = \ZZ_{\ell}^{\times} $. Since $  H_{\tau} $ contains $   
A^{\circ}_{\tau_{\ell} }U^{\ell} $, we have  $ \nu(H_{\tau})  = \widehat{\ZZ}^ { \times  }   =  \nu(U)  $. 
\end{proof}
Define $ \sigma = \sigma^{\ell}  \sigma_{\ell} \in \Gb(\Ab_{f} ) $ by setting $   \sigma_{\ell} = \mathrm{diag}(\ell, \ell ,1 , 1 )  \in \Gb(\QQ_{\ell}) $ and   $ \sigma^{\ell} = 1 $. 
\begin{lemma}   \label{deggsp4}      We have 
$ K \sigma K = U \sigma K \sqcup U \sigma \tau K  $. Moreover,  
\begin{enumerate}   [label=\normalfont (\alph*), before = \vspace{\smallskipamount}, after =  \vspace{\smallskipamount}]    \setlength\itemsep{0.1em}    
\item  $ | K \sigma K/ K | = 1  + \ell  + \ell  ^{2} +   \ell  ^{3}    $ 
\item    $  \deg \,  [U \sigma   K ]_{*}  =      (1 + \ell)^{2} $ 
\item       $ \deg \, [U \sigma  \tau    K ]_{*}  = 1 $  
\end{enumerate}
\end{lemma}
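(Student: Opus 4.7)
My plan is to work locally at the prime $\ell$ throughout, since $\sigma^\ell = \tau^\ell = 1$ and all intersections in question agree with $U^\ell$ on the away-from-$\ell$ part. The basic input is the identification of $K_\ell/K_\ell^{\sigma_\ell}$ with the set of Lagrangian subspaces of $\Lambda_0/\ell\Lambda_0 \cong \mathbb{F}_\ell^4$, where $\Lambda_0 = \ZZ_\ell^4$. A short block-matrix computation shows that $K_\ell^{\sigma_\ell}$ consists of those $k \in K_\ell$ whose lower-left $2\times 2$ block lies in $\ell M_2(\ZZ_\ell)$, and that this group is precisely the stabilizer in $K_\ell$ of the Lagrangian $L_{\mathrm{std}} = \mathrm{span}(e_1, e_2) \pmod \ell$. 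Part (a) then follows from the standard count $(\ell+1)(\ell^2+1) = 1 + \ell + \ell^2 + \ell^3$ of Lagrangians in a $4$-dimensional symplectic space over $\mathbb{F}_\ell$.

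For the decomposition and the $U_\ell$-orbit analysis, I observe that the embedding $\Hb \hookrightarrow \Gb$ preserves the orthogonal symplectic splitting $V = W_1 \oplus W_2$ with $W_1 = \mathrm{span}(e_1, e_3)$ and $W_2 = \mathrm{span}(e_2, e_4)$, so each $U_\ell$-orbit on Lagrangians has a well-defined invariant $(\dim_{\mathbb{F}_\ell}(L \cap W_1), \dim_{\mathbb{F}_\ell}(L \cap W_2))$. Dimension counting rules out all values except Type 1 (both intersections $1$-dimensional) and Type 2 (both trivial). Direct lattice calculation gives $\sigma_\ell \Lambda_0 \pmod\ell = \mathrm{span}(e_3, e_4)$ (Type 1) and $\sigma_\ell \tau_\ell \Lambda_0 \pmod\ell = \mathrm{span}(e_2 + e_3, e_1 + e_4)$ (Type 2), and transitivity of $U_\ell$ on each type reduces to transitivity of $\GL_2(\mathbb{F}_\ell)^2$ on pairs of lines (Type 1) or on isomorphisms $W_1 \to W_2$ of determinant $-1$ (Type 2), the determinant constraint $\det g_1 = \det g_2$ being easily arranged in both cases.

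For (b), the lattice $\sigma_\ell \Lambda_0$ decomposes along $W_1 \oplus W_2$ as $(\ell\ZZ_\ell e_1 + \ZZ_\ell e_3) \oplus (\ell\ZZ_\ell e_2 + \ZZ_\ell e_4)$, so $(g_1, g_2) \in \Hb(\QQ_\ell)$ preserves it iff each $g_i$ preserves the corresponding sublattice of $W_i$. Conjugating by $S = \mathrm{diag}(\ell, 1)$ identifies this condition with $g_i \in J := S\GL_2(\ZZ_\ell)S^{-1}$, and a direct check gives $J \cap \GL_2(\ZZ_\ell) = I_0^-$ (matrices with $b \in \ell\ZZ_\ell$) of index $\ell + 1$ in $J$. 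Combined with the surjectivity of $\det : I_0^- \to \ZZ_\ell^\times$, a short exact-sequence argument on the determinant map yields $[H_\sigma : U_\sigma] = (\ell+1)^2$. For (c), I compute $\sigma_\ell \tau_\ell \Lambda_0 = \{(x_1, x_2, x_3, x_4) \in \ZZ_\ell^4 : x_1 \equiv x_4,\ x_2 \equiv x_3 \pmod \ell\}$; testing preservation on the generating vectors $(1,0,0,1)$ and $(0,1,1,0)$ forces any $(g_1, g_2) \in \Hb(\QQ_\ell)$ stabilizing this lattice to have all entries of $g_i$ integral, giving $H_{\sigma\tau} = U_{\sigma\tau}$ at $\ell$ and hence degree $1$.

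The main technical hurdle is the coordinate-level argument in (c): one must verify that the images of the two generators $(1,0,0,1)$ and $(0,1,1,0)$ under $(g_1, g_2)$, when required to lie in $\sigma_\ell\tau_\ell\Lambda_0$, already force every entry of $g_1$ and $g_2$ to be integral, not merely $\ell$-integral up to constants. The remaining parts reduce to standard symplectic and parahoric combinatorics together with the identification of single cosets with lattices via the correspondence between $K/K^\sigma$ and Lagrangians.
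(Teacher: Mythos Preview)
Your argument is correct and takes a genuinely different route from the paper. The paper establishes the decomposition $K\sigma K = U\sigma K \sqcup U\sigma\tau K$ by citing \cite[Proposition 9.3.3]{AES}, and as an alternative lists the explicit coset representatives for $K\sigma K/K$ (from Taylor or Roberts--Schmidt) and reduces each via row and column operations to one of $\sigma$ or $\sigma\tau$. Parts (b) and (c) are likewise cited from \cite{AES}, with the alternative for (b) being a direct link to the degree of the $\GL_2$ Hecke operator $T_\ell$, and for (c) a bare-hands comparison of the entry conditions defining $(\sigma\tau)^{-1}U(\sigma\tau)\cap K$ and $(\sigma\tau)^{-1}H(\sigma\tau)\cap K$.

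Your approach replaces the explicit coset list by the conceptual identification of $K_\ell\sigma_\ell K_\ell/K_\ell$ with Lagrangians in $\Lambda_0/\ell\Lambda_0$, and recasts the $U$-orbit decomposition as the classification of Lagrangians by their intersection pattern with the $\Hb$-invariant splitting $W_1\oplus W_2$. This is more self-contained and makes the appearance of exactly two orbits transparent, at the cost of having to verify transitivity on each type and perform the lattice-stabilizer computations for (b) and (c) from scratch. Two small points worth tightening: in part (a) you write $K_\ell^{\sigma_\ell}$ but then parametrize cosets by $g\Lambda_0/\ell\Lambda_0$, which is really the identification via $K_\ell/(K_\ell)_{\sigma_\ell}$; the two have the same index so the count is unaffected, but the orbit analysis you actually carry out is the lattice one. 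In part (c), ``entries integral'' gives $h\in M_4(\ZZ_\ell)$; to land in $K_\ell=\GSp_4(\ZZ_\ell)$ you also use that the similitude $c(h)$ is a unit, which follows immediately from $h\in(\sigma\tau)K_\ell(\sigma\tau)^{-1}$. Both are routine, but you may want to make them explicit.
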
  
\begin{proof}  The first statement is  \cite[Proposition 9.3.3]{AES}.  Alternatively, note that  by \cite[p.\ 38]{Taylor}  or  \cite[p.\ 189]{Schmidt}, a set of representatives for $ K \sigma  K  /  K  $ (in our convention)  is given by $$ 
\begin{pmatrix}   
1  & &  &   \\
& 1 & & \\
& &  \ell & \\
& & &  \ell 
\end{pmatrix}  ,   \quad    \begin{pmatrix}   
1 &  &   &   \\
& \ell &  & b \\
& & \ell & \\
& & &  1   
\end{pmatrix}  ,   \quad    \begin{pmatrix}   
 \ell  & a & b &   \\
& 1 & & \\
& & 1 & \\
& & -a & \ell 
\end{pmatrix}  ,  \quad       \begin{pmatrix}   
\ell  & &  b &  a  \\
&  \ell &  a  & c    \\
& & 1 & \\
& &  &   1 
\end{pmatrix}    ,    $$
where  the entries  $ a, b , c $  in each of the  displayed matrices  run over  $ 0 , 1,  \ldots,  \ell - 1   $. One now shows  by  applying elementary row and column operations  that the classes of these matrices in $ U \backslash  G /  K  $ are represented by $  \sigma $ and $ \sigma \tau $. That $ \sigma $ and $ \sigma \tau $ represent distinct    classes in $ U \backslash G   /   K   $       follows by noticing that $ HK \neq H \tau K $ (which one can see by showing that $ h \tau \notin K $ for any $ h \in H $).  

Part (a) follows from  the decomposition above. Part (b) follows by \cite[Lemma 9.4.1(a)]{AES} by evaluating the function computed there at the zero matrix and part (c) by Lemma 9.3.2 in \emph{loc.cit}. Alternatively, (b) can also be computed by relating  $   \deg  \,    [U \sigma K ]_{*}  $  to the degree of the $ T_{\ell} $ Hecke operator for $ \GL_{2} $ and part (c) by noticing that the conditions on the matrix entries of an element  $ h \in H$  imposed  by requiring $  h \in  (\sigma  \tau )^{-1} H  \sigma \tau  \cap  K $ or $ h \in  ( \sigma \tau )  ^{-1}  U   (\sigma \tau) \cap K $ are the same.  
\end{proof} 
Denote $ \sigma_{1} \colonequals \sigma $ and  $ \sigma_{2} \colonequals \sigma \tau $, so that $ I = \left \{ 1, 2 \right \} $ is our indexing set. 
By our choice of $ K $  and  Lemma  
  \ref{dg}, we have      $ d_{\sigma_{1}, K } = d_{1_{G} , K } =  1 $. Lemma \ref{B_{i}} and Lemma \ref{Bilemma}    
imply that we can  take  $ B_{i} = \left \{ 1_{H} \right \} $ and  that  $  c_{i} =  e_{i}    $ for $ i = 1, 2 $.  
 Invoking Lemma \ref{deggsp4},  equation   (\ref{mainfalse})   reads     
\begin{equation}   \label{falseequal}     1 +  \ell  + \ell  ^{2} +  \ell   ^{3}  \stackrel{   ? } { = }      (\ell +1)^{2} +  d_{\sigma_{2},   K    }  
\end{equation}    
If we have $ \ell = 2 $ (e.g., take $ K = K(N) $ for $ N \geq 3 $ odd), we have $ d_{\tau,   K } = 1 $   by     Lemma \ref{connC}  and so $ d_{\sigma_{2},  K } = 1 $  by    Lemma   \ref{dg}.       But in that case,  the LHS   of   (\ref{falseequal})    is $  15 $ while the      RHS is $ 10   $.    

\subsection{Unitary groups} In  \S \ref{mainfalseexampleone}, our eventual counterexample only worked for $ \ell = 2 $ which might seem a little unsatisfactory in terms of scope. 
In this section, we consider certain unitary Shimura varieties for which there is an abundance of elements $ w $ satisfying  Lemma  \ref{closedimmersion}.  This allows us to  furnish a set of counterexamples for  all  primes $ \ell $  that   are     split in an imaginary  quadratic extension used to define the Shimura variety.  Although the ideas are the same before, a little more work is involved.    

Let $E = \QQ(\sqrt{-d})$ be an imaginary quadratic field, and let $\gamma \in \Gal(E/\QQ)$ denote the nontrivial element. For an integer $p$, let $\mathrm{GU}(p,p)$ be the unitary similitude group over $\QQ$ of signature $(p,p)$, defined with respect to the Hermitian pairing over $E$ given by the Hermitian matrix $
J = \mathrm{diag}(1,-1,1,-1, \ldots, 1,-1) $.  
That is, $J$  is   the  $ 2p  \times   2p  $  diagonal matrix with $1$ in the odd-numbered entries and $-1$ in the even-numbered entries (cf.\:\cite[\S 3.1]{Anticyclo}).
Let  $ c \colon \mathrm{GU}(p,p) \to \GG_{m} $ denote  the   similitude map  and  $ \det : \mathrm{GU}(p,p)  \to  \mathrm{GU}(1) $ the  determinant.    Set    $$  \Hb = \mathrm{GU}(1,1) \times_{c}   \mathrm{GU}(1,1) ,      \quad \quad   \Gb = \mathrm{GU}(2,2) $$    and let $\iota \colon \Hb \to \Gb$ be the embedding $(h_{1}, h_{2}) \mapsto \mathrm{diag}(h_{1}, h_{2})$. Then both $\Hb$ and $\Gb$ admit standard Shimura data, given by sending $z \in \mathbb{C}^\times$ to alternating copies of $z$ and $\bar{z}$ along the diagonal. Moreover, both of these data satisfy (SV5), and $\iota$ constitutes an embedding of Shimura data. We also observe that the derived group of $\Hb$ is simply connected, since $(\Hb^{\mathrm{der}})_{\overline{\QQ}} \simeq \SL_2 \times \SL_2$. Therefore, Corollary~\ref{zkgcoro} applies in this context as well.    

Let $  \mathrm{U}    _{1} $ denote torus of norm one elements in $ E^{\times} $,  as in \cite[\S 3.1]{Anticyclo}. Then $ \Tb  =  \Hb / \Hb ^{\mathrm{der} }    $  is  
 isomorphic to $    \mathrm{U}_{1} \times \GG_{m} \times \mathrm{U}_{1}   $ in such a  way  that     $  \nu  \colon  \Hb  \to   \Tb   $  is  identified with  the map    \begin{align*}  \nu \colon  \Hb   &  \longrightarrow \mathrm{U}_{1} \times \GG_{m} \times \mathrm{U}_{1} ,   \\
h = (h_{1}, h_{2})    &   \longmapsto   \left ( \frac{ c(h) }  { \det  h_{1}       }  ,   c(h),   \frac{c(h) } { \det h_{2} }   \right   ) 
\end{align*}    
where $ c(h) \colonequals c(h_{1}) = c(h_{2}) $ is the  common   similitude. It  is  easily  seen      that     $ \Tb(\QQ) $ is discrete in $ \Tb(\Ab_{f}) $.    Let $ \mathscr{N}  \colon E^{\times} \to \QQ^{\times} $ denote the norm  map    and   let   $     S \subset E^{\times}  \setminus \left \{ 1 \right \}  $ denote set of units such that   $ \mathscr{N} (\xi) = \xi \cdot  \gamma(\xi) =  1 $. For any $ \xi \in S $,   define    $$ w_{\xi}  \colonequals \mathrm{diag}(1,1,\xi,\xi)  \in   \mathbf{Z}_{\Hb}(\QQ) . $$
For any $ \QQ $-algebra $ R$ and $ g \in \Gb(R) $,  the   condition  $ g w_{\xi} = w_{\xi} g $  forces $ g $ to be block diagonal. Thus     the centralizer of $ w_{\xi}$ in $ \Gb $ equals  $  \Hb   $. As before,  we  will   consider $ w_{\xi} \in \Gb(\QQ) $ as an element of $ G $  via the diagonal embedding $    \Gb(\QQ)  \hookrightarrow \Gb(\Ab_{f}) =  G $.  

Let $ \ell \nmid 2  d      $ be a  fixed  rational prime  that is split in $ E $.  If $ \jmath \colon E \to \QQ_{\ell} $ is an embedding  and    $ \xi \in S $ is such that $ \jmath(\xi) \in 1 + \ell \ZZ_{\ell} $, then $ \jmath \circ \gamma (\xi) \in 1 + \ell \ZZ_{\ell} $ as well, since $     \gamma(\xi) $ is the inverse of $ \xi   $. We will refer to such $ \xi \in S $ as  \emph{$ \ell $-invertible}.          For $  m   $ a positive integer, $$ \xi_{m}   \colonequals  \left (   \frac{ 1 - d m  ^{2}  \ell ^{2} } { 1 +  d  m^{2}  \ell^{2}}  \right )   +  \left   (     \frac{   2      m \ell }   {   1  + d  m^{2 }  \ell  ^  {  2   }    }  \right ) \sqrt { -d  }     \in S   $$ 
are examples of such elements.

Fix now a neat compact open subgroup $K$ and let $\ell$ be a prime split in $E$ such that $K$ is hyperspecial at $\ell$, i.e., $K = K^{\ell} K_{\ell}$ with $K_{\ell} = \Gb(\ZZ_{\ell})$. For any $\xi \in S$, define $K_{\xi} := K \cap w_{\xi}^{-1} K w_{\xi}$. Then $K_{\xi}$ is a neat compact open subgroup of $G$, and both $K_{\xi}$ and $w_{\xi} K_{\xi} w_{\xi}^{-1}$ are contained in $K$. If $\xi$ is also $\ell$-invertible, then $\xi \in E \otimes_{\ell} \QQ_{\ell} \simeq \QQ_{\ell} \oplus \QQ_{\ell}$ lies in $\ZZ_{\ell}^{\times} \times \ZZ_{\ell}^{\times}$ and the $\ell$-component $w_{\xi,\ell} \in \Gb(\QQ_{\ell})$ of $w_{\xi}$ lies in $K_{\ell}$. Thus, for  an  $\ell$-invertible  element   $\xi \in S $, which we fix in what follows, $K_{\xi}$ is hyperspecial at $\ell$.
The upshot of this discussion is that, by replacing $K$ with $K_{\xi}$, we may assume that

\begin{itemize} 
\item $ K $ and $w_{\xi}  K w_{\xi}^{-1} $ are hyperspecial (and equal) at $ \ell $,   
\item $ K $ and $  w_{\xi} K  w_{\xi}^{-1} $ are contained in   a     common     neat compact open subgroup.
\end{itemize}  
Set $ U \colonequals K \cap \Hb(\Ab_{f}) $.    Then $ \iota_{U,K} $ is a closed immersion by   Lemma  \ref{closedimmersion} and our assumptions   on $ K  $.   Let   
   
$$ \tau _{\ell}   \colonequals       \begin{pmatrix}    1   &    &  \scalebox{1.1}{$\frac{1}{\ell}$}     \\&  1 &    
\\[0.5em] & &   1 & \\& & &   1 \end{pmatrix} \in \Gb(\QQ_{\ell}) $$
and let $ \tau = \tau_{\ell} \tau^{\ell}  \in  \Gb(\Ab_{f}    )   $ where $ \tau^{\ell} $ is identity.

\begin{lemma}  \label{connCforxi}  The  morphism      $ \iota \colon  \Sh_{\Hb}(H_{\tau} )  \to  \Sh_{\Gb} ( \tau K \tau^{-1} ) $ is a closed  immersion.   
\end{lemma}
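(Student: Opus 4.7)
The plan is to apply Lemma~\ref{closedimmersion} with $w = w_\xi$, whose centralizer in $\Gb$ is $\Hb$ as noted in the text. Mirroring the proof of Lemma~\ref{connC}, the first step is to invoke the second bullet preceding the statement to choose a neat compact open subgroup $L$ of $G$ containing both $K$ and $w_\xi K w_\xi^{-1}$, and to set $L' \colonequals \tau L \tau^{-1}$, which is neat and contains $\tau K \tau^{-1}$. The conclusion will follow once we verify that $L' \supset w_\xi \tau K \tau^{-1} w_\xi^{-1}$, since then Lemma~\ref{closedimmersion} applies to $\tau K \tau^{-1}$ with the distinguished central element $w_\xi$.

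The containment can be checked factor by factor. Since $\tau^\ell = 1$, the away-from-$\ell$ part collapses to $L^\ell \supset w_\xi^\ell K^\ell (w_\xi^\ell)^{-1}$, which is built into the choice of $L$. At the place $\ell$, the hyperspecial assumption on $K$ and $w_\xi K w_\xi^{-1}$ forces $L_\ell = K_\ell$, while $\ell$-invertibility of $\xi$ places $w_{\xi,\ell}$ inside $K_\ell$. The problem therefore reduces to verifying that $w_{\xi,\ell}$ normalizes $\tau_\ell K_\ell \tau_\ell^{-1}$, equivalently that $\tau_\ell^{-1} w_{\xi,\ell} \tau_\ell$ lies in $K_\ell = \Gb(\ZZ_\ell)$.

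This last step is the main point and I would handle it by a direct matrix computation. Since $\tau_\ell$ differs from the identity by a single off-diagonal entry of size $1/\ell$ and $w_{\xi,\ell}$ is diagonal, a short calculation yields an expression of the form $w_{\xi,\ell} + \tfrac{1-\xi}{\ell}\,E$ for a fixed nilpotent elementary matrix $E$. The $\ell$-invertibility of $\xi$ is precisely the hypothesis that makes both $\xi$ and $\tfrac{1-\xi}{\ell}$ integral in each factor of $E \otimes_\QQ \QQ_\ell \simeq \QQ_\ell \oplus \QQ_\ell$, so the resulting matrix has entries in $\ZZ_\ell$ and unit determinant, hence lies in $K_\ell$. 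This is the one step where a hypothesis specific to the present lemma is used, and it is the only place where the argument could plausibly fail were $\xi$ not $\ell$-invertible; everything else is a formal transcription of the template provided by Lemma~\ref{connC}.
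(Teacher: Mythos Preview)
Your proposal is correct and follows essentially the same route as the paper's proof: both argue by analogy with Lemma~\ref{connC}, reduce to the local statement that $\tau_\ell^{-1} w_{\xi,\ell} \tau_\ell \in K_\ell$, and verify this via the explicit matrix computation together with $\ell$-invertibility of $\xi$. The paper is terser (it simply displays the conjugated matrix and invokes $\ell$-invertibility), but your more detailed unpacking of the reduction steps matches the intended argument exactly.
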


\begin{proof}    Arguing  analogously to Lemma  \ref{connC},  the argument boils down to showing that  $$ \tau_{\ell}^{-1} w_{\xi,\ell} \tau_{\ell}  =    \left(\begin{matrix}
1 & & \scalebox{1.1}{$\frac{1-\xi}{\ell}$}     \\ 
 & 1 &   
 \\[0.5em]    
 &  & \xi &  \\
 & &  &  \xi 
\end{matrix}\right)   \in  \Gb ( \QQ_{\ell} )       $$  
lies in $ K_{\ell} $, which it does by $ \ell $-invertibility    of $ \xi $.          
\end{proof}  
\begin{lemma}  \label{B_{i,xi}}    For all $  \eta  \in \left \{ h , h \tau \, | \, h \in 
 H    \right \} $, $  \nu(H_{\eta}) =   \nu(U)    $.         
\end{lemma}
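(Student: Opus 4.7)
My plan is to follow the template of Lemma~\ref{B_{i}} with the local computation adapted to the unitary setting. First, for any $h \in H$ we have $H_{h\eta} = h H_\eta h^{-1}$, and since $\Tb$ is commutative and $\nu$ is a homomorphism, $\nu(H_{h\eta}) = \nu(H_\eta)$. This reduces the assertion to the two cases $\eta \in \{1_G, \tau\}$. The case $\eta = 1_G$ is trivial since $H_{1_G} = U$. For $\eta = \tau$, write $\tau = \tau_\ell$ with trivial away-from-$\ell$ component, so that $H_\tau^\ell = U^\ell$ and hence $\nu(H_\tau^\ell) = \nu(U^\ell)$. The problem thereby reduces to the purely local statement $\nu(H_{\tau,\ell}) = \nu(U_\ell)$, where $H_{\tau,\ell} = \Hb(\QQ_\ell) \cap \tau_\ell K_\ell \tau_\ell^{-1}$ and $U_\ell = \Hb(\ZZ_\ell)$.

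Next I would exploit that $\ell$ splits in $E$. Fix an isomorphism $E \otimes_\QQ \QQ_\ell \simeq \QQ_\ell \oplus \QQ_\ell$ under which complex conjugation swaps the two factors. Elements of $\mathrm{U}_1(\QQ_\ell)$ then correspond to pairs $(\alpha,\alpha^{-1}) \in \QQ_\ell^\times \times \QQ_\ell^\times$, and $\Tb(\QQ_\ell) \simeq \QQ_\ell^\times \times \QQ_\ell^\times \times \QQ_\ell^\times$ with $\nu(U_\ell) = (\ZZ_\ell^\times)^3$ (the unique maximal compact) by hyperspeciality of $K$ at $\ell$. I would then consider the $\QQ_\ell$-split maximal torus $\mathbf{A} \subset \Hb_{\QQ_\ell}$ consisting of elements mapped under the embedding $\iota$ to diagonal matrices $\mathrm{diag}(a_1,a_2,a_3,a_4) \in \Gb(\QQ_\ell)$, where each $a_i \in (E \otimes \QQ_\ell)^\times = \QQ_\ell^\times \times \QQ_\ell^\times$ is written $a_i = (\alpha_i,\beta_i)$, subject to the similitude conditions $\alpha_i\beta_i = c$ for a common $c \in \QQ_\ell^\times$.

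Finally, one computes directly that $\tau_\ell \,\mathrm{diag}(a_1,a_2,a_3,a_4)\, \tau_\ell^{-1}$ differs from $\mathrm{diag}(a_1,a_2,a_3,a_4)$ only by the entry $(a_3 - a_1)/\ell$ in position $(1,3)$. Hence such a diagonal element lies in $\tau_\ell K_\ell \tau_\ell^{-1}$ precisely when $\alpha_i,\beta_i \in \ZZ_\ell^\times$ for all $i$ and $\alpha_1 \equiv \alpha_3$, $\beta_1 \equiv \beta_3 \pmod \ell$. The map $\nu$ restricted to $\mathbf{A}$ sends this datum to the triple $(\beta_1/\alpha_2,\, c,\, \beta_3/\alpha_4)$ (or equivalent expressions using $\alpha_i\beta_i = c$). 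Since the parameters $\alpha_2, \alpha_4 \in \ZZ_\ell^\times$ are unconstrained by the congruence $\alpha_1 \equiv \alpha_3$, $\beta_1 \equiv \beta_3$, we can realize any element of $(\ZZ_\ell^\times)^3$ as the image of a suitable diagonal element of $H_{\tau,\ell}$. This yields $\nu(H_{\tau,\ell}) \supseteq \nu(U_\ell)$, and the reverse inclusion is automatic since both are maximal compacts of $\Tb(\QQ_\ell)$. The main technical obstacle is the bookkeeping in this last step: properly identifying $\nu$ on $\mathbf{A}$ under the split identification and verifying that the two mod-$\ell$ congruences genuinely impose no restriction on the image. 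All remaining assertions are routine.
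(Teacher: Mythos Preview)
Your proposal is correct and follows essentially the same approach as the paper: reduce via conjugation to $\eta\in\{1_G,\tau\}$, localize at $\ell$, and exhibit enough diagonal elements in $H_{\tau,\ell}$ to surject onto $\nu(U_\ell)=(\ZZ_\ell^\times)^3$. The only cosmetic difference is that the paper passes to the split identification $\Hb_{\QQ_\ell}\simeq\GG_m\times\GL_2\times\GL_2$ and writes down the subgroup $\{(u,\mathrm{diag}(a,b),\mathrm{diag}(a,c)):a,b,c,u\in\ZZ_\ell^\times\}$, whereas you work in the Hermitian model with $E\otimes\QQ_\ell$-coefficients; these are the same torus in different coordinates. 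Two tiny points: membership in $\tau_\ell K_\ell\tau_\ell^{-1}$ requires checking $\tau_\ell^{-1}D\tau_\ell\in K_\ell$ rather than $\tau_\ell D\tau_\ell^{-1}$ (the resulting congruence is the same, so no harm), and your reverse inclusion is better phrased as ``$H_{\tau,\ell}$ is compact, so $\nu(H_{\tau,\ell})$ is a compact subgroup of $\Tb(\QQ_\ell)$, hence contained in its unique maximal compact'' rather than asserting both sides are maximal compacts.
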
    
\begin{proof} As in Lemma \ref{B_{i}},   it  suffices  to restrict to $  \eta  \in \left \{ 1_{H}   , \tau \right \} $ and the   claim for $  \eta =  1_{H}  $ is  again  trivial.    At a split prime, the choice of  an   $ \alpha \in \QQ_{\ell} $ such that $ \alpha^{2} = -d $ determines   compatible       isomorphisms $ \Tb_{\QQ_{\ell}}  \simeq   \GG_{m}^{3} $ 
and $    \Hb_{\QQ_{\ell}}  \simeq \GG_{m} \times \GL_{2}  \times  \GL_{2}  $    
such that $ \nu \colon \Hb_{\QQ_{\ell}}  \to \Tb_{\QQ_{\ell}}  $ is given by $ (c, h_{1}, h_{2})   \mapsto   \left  (  c \det h_{1}^{-1} , c , c  \det h_{2} ^{-1} \right ) $.    
Then one can show that $ H_{\tau_{\ell}} \colonequals \Hb(\QQ_{\ell}) \cap \tau_{\ell} K _ { \ell  }    \tau_{\ell}^{-1} $ contains  the  subgroup  $$  \left \{ \big (u, \mathrm{diag}(a, b),\mathrm{diag}(a,c)\big )  \in  \GG_{m}  \times  \GL_{2}  \times \GL_{2} \, | \, a,b,c,u \in \ZZ_{\ell}^{\times} \right \}  . $$   From this, one deduces   that     $ \nu(H_{\tau_{\ell}}) =   \ZZ_{\ell} ^ {  \times }   \times \ZZ_{\ell}^{\times}  \times   \ZZ_{\ell}^{\times } =  \nu(U   _ {  \ell    }     )  $. 
\end{proof}

Since $ \ell $ is split in $ E $, we can fix isomorphisms $\Hb_{\QQ_{\ell}} \simeq \GG_{m} \times \GL_{2} \times \GL_{2}$ and $\Gb_{\QQ_{\ell}} \simeq \GG_{m} \times \GL_{4}$ so that  the  local  embedding   $\iota \colon \Hb_{\QQ_{\ell}} \hookrightarrow  \Gb_{\QQ_{\ell}}$ is identified with the embedding \begin{align*} \GG_{m} \times \GL_{2} \times \GL_{2} &  \hookrightarrow \GG_{m} \times \GL_{4}  \\
(c, h_{1}, h_{2})   &  \mapsto (c,   \mathrm{diag}(h_{1},  h_{2} )    )   .    
\end{align*}
  Let $  \sigma $, $ \sigma ' \in G $ be defined so that their components away $ \ell $ are identity and at $ \ell $  are     given by  $ \sigma  _ { \ell }   \colonequals \big  (1,    \mathrm{diag}(\ell,1,1,1) \big ) $, $ \sigma'_{\ell} =  \big (1, \mathrm{diag}(1,1,  \ell, 1 ) \big ) $. 
\begin{lemma}  We have $ K \sigma K = U \sigma K \sqcup  U \sigma' K \sqcup  U \sigma \tau K $.   Moreover,     
\begin{enumerate} [label=\normalfont (\alph*), before = \vspace{\smallskipamount}, after =  \vspace{\smallskipamount}]    \setlength\itemsep{0.1em}   
\item  $ | K \sigma K / K |  =  1 + \ell  +    \ell^{2} + \ell^{3} $,  
\item   $  \deg   \,  [U \sigma  K ]_{*}  = \deg   \,   [U \sigma ' K ]_{*}     =  
1  +   \ell
  $, 
\item    $  \deg  \,     [ U \sigma \tau  K] _{*}     = 1   $    
 
\end{enumerate}
\end{lemma}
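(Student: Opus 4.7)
The plan is to reduce all computations to the prime $\ell$: since $\sigma$, $\sigma'$, and $\tau$ have trivial components away from $\ell$ and both $K$ and $U$ are hyperspecial at $\ell$, everything takes place in the split local groups $\Gb(\QQ_\ell) \simeq \GG_m \times \GL_4$ and $\Hb(\QQ_\ell) \simeq \GG_m \times \GL_2 \times \GL_2$, with the embedding $(c, h_1, h_2) \mapsto (c, \mathrm{diag}(h_1, h_2))$. The similitude factor plays no role, so the problem becomes a $\GL_4$ Hecke calculation relative to its block-diagonal $(\GL_2 \times \GL_2)$-subgroup.

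For part (a) and the double coset decomposition, I would use the standard lattice model: $K_\ell \sigma_\ell K_\ell / K_\ell$ is in bijection with the set of index-$\ell$ sublattices of $\ZZ_\ell^4$, parametrized by hyperplanes in $\mathbb{F}_\ell^4$, i.e., by points of $\mathbb{P}^3(\mathbb{F}_\ell)$. This gives $|K\sigma K/K| = 1 + \ell + \ell^2 + \ell^3$ and proves (a). Under this identification, the left $U_\ell$-action factors through $\GL_2(\mathbb{F}_\ell) \times \GL_2(\mathbb{F}_\ell)$ acting block-diagonally on $\mathbb{F}_\ell^4 = \mathbb{F}_\ell^2 \oplus \mathbb{F}_\ell^2$, with exactly three orbits on $\mathbb{P}^3(\mathbb{F}_\ell)$: lines supported in the first summand, in the second summand, and transverse to both. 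Matching the lattices $\sigma \ZZ_\ell^4$, $\sigma' \ZZ_\ell^4$, and $\sigma\tau \ZZ_\ell^4$ to representatives of these three orbits then yields the decomposition $K\sigma K = U\sigma K \sqcup U\sigma' K \sqcup U\sigma\tau K$.

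For the mixed degrees $\deg[U\sigma_i K]_* = [H \cap \sigma_i K \sigma_i^{-1} : U \cap \sigma_i K \sigma_i^{-1}]$ in (b) and (c), I would compute each intersection at $\ell$ directly. For $\sigma_\ell = \mathrm{diag}(\ell, 1, 1, 1)$, an entry-by-entry analysis of the conjugation identifies $H_\ell \cap \sigma K \sigma^{-1}$ with triples $(c, h_1, h_2)$ where $c \in \ZZ_\ell^\times$, $h_2 \in \GL_2(\ZZ_\ell)$ and $h_1 \in M\,\GL_2(\ZZ_\ell)\,M^{-1}$ with $M = \mathrm{diag}(\ell, 1)$, while intersecting with $U_\ell$ further forces $h_1 \in \GL_2(\ZZ_\ell) \cap M\,\GL_2(\ZZ_\ell)\,M^{-1}$, which is an Iwahori subgroup. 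The resulting index is $|\mathbb{P}^1(\mathbb{F}_\ell)| = \ell + 1$, giving (b) for $\sigma$; the case of $\sigma'$ follows by the obvious block-swap symmetry.

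The main obstacle is part (c), which requires showing that the $\tau$-twist already forces the $H$-intersection to coincide with the $U$-intersection at $\ell$, so that the degree is $1$. Writing $\sigma_\ell \tau_\ell$ as a block upper-triangular $4\times 4$ matrix with diagonal blocks $M = \mathrm{diag}(\ell, 1)$ and the $2 \times 2$ identity, and upper-right block $N = \mathrm{diag}(1,0)$, and conjugating a block-diagonal $\mathrm{diag}(h_1, h_2) \in \Hb(\QQ_\ell)$ by it, the upper-right block of the resulting $\GL_4$ matrix contributes a new integrality condition. This condition in particular forces the $(2,1)$-entry of $h_1$ to lie in $\ZZ_\ell$ rather than $(1/\ell)\ZZ_\ell$, and combined with the diagonal-block conditions pins $h_1$ into the Iwahori subgroup of $\GL_2(\ZZ_\ell)$, with an analogous condition on $h_2$ and a compatibility $(h_1)_{11} \equiv (h_2)_{11} \pmod{\ell}$. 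Since this subgroup already lies inside $U_\ell$, one concludes $H_{\sigma\tau} = U_{\sigma\tau}$ and therefore $\deg[U\sigma\tau K]_* = 1$. The crux is carefully tracking how the off-diagonal $\tau$-entry interacts with the $\sigma$-scaling to eliminate the non-integral freedom that $\sigma$ alone would allow.
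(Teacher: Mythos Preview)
Your proof is correct and follows the same overall strategy as the paper: reduce to the split prime $\ell$, use the $\mathbb{P}^{3}(\mathbb{F}_{\ell})$ parametrization of $K\sigma K/K$ for part (a), and verify the mixed degrees in (b) and (c) by direct entrywise analysis of the conjugated block matrices. Your computation for (c), tracking how the off-diagonal block $M^{-1}h_{1}N - M^{-1}Nh_{2}$ forces the $(2,1)$-entry of $h_{1}$ back into $\ZZ_{\ell}$ and hence $H_{\sigma\tau}\subset U$, is exactly the content of the paper's remark that ``the conditions imposed by $(\sigma\tau)^{-1}H\sigma\tau\cap K$ and $(\sigma\tau)^{-1}U\sigma\tau\cap K$ are the same.''

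The one place your argument is organized differently is the double coset decomposition itself. The paper writes down explicit coset representatives for $K\sigma K/K$, reduces each to one of $\sigma,\sigma',\sigma\tau$ by row and column operations, and then separately argues distinctness (via $HK\neq H\tau K$ and a Cartan-decomposition trick from \cite{AES} to distinguish $U\sigma K$ from $U\sigma'K$). Your orbit-counting argument, classifying the three $\GL_{2}(\mathbb{F}_{\ell})\times\GL_{2}(\mathbb{F}_{\ell})$-orbits on $\mathbb{P}^{3}(\mathbb{F}_{\ell})$ and matching $\sigma,\sigma',\sigma\tau$ to representatives, handles existence and distinctness simultaneously and is somewhat cleaner. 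One small point to tidy: index-$\ell$ sublattices correspond to \emph{hyperplanes} in $\mathbb{F}_{\ell}^{4}$, and your description of the three orbits is phrased in terms of \emph{lines}; the identification you are implicitly using is projective duality, under which the block-diagonal $\GL_{2}\times\GL_{2}$-action is self-dual, so the orbit count is unaffected. It would be worth making that passage explicit.
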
   
\begin{proof}  The first claim is a special case of \cite[Proposition 7.4.5]{AES}.    Alternatively, note that the stratification of the  projective space  $  \mathbb{P}^{3} $ over the field  $ \ZZ / \ell \ZZ $ implies that a set of representatives for $  K \sigma K /  K   $ is  given by   $$ 
\begin{pmatrix}   
1  & &  &   \\
& 1 & & \\
& &  1 & \\
& & &  \ell 
\end{pmatrix}  ,   \quad    \begin{pmatrix}   
1 &  &   &   \\
&  1  &  & \\
& & \ell &  a \\
& & &  1  
\end{pmatrix}  ,   \quad    \begin{pmatrix}   
1  &  &  &   \\
&  \ell & a  & b  \\
& & 1 & \\
& & &   1    
\end{pmatrix}  ,  \quad       \begin{pmatrix}   
\ell  & a &  b &  c  \\
&   1 &   &    \\
& & 1 & \\
& &  &   1 
\end{pmatrix}    ,  $$ 
where the $ \QQ_{\ell}^{\times} $-component (in $  \Gb(\QQ_{\ell}) \simeq  \QQ_{\ell}^{\times}  \times \GL_{4}(\QQ_{\ell}) $)  is taken to be   $ 1 $, and   where  the  entries    $ a, b, c $  in each of  the  displayed matrices  run over $  0 , 1, \ldots, \ell - 1 $.  
One then reduces each of these matrices by  appropriate  row and column operations to show that the classes of these matrices in $ U \backslash G /  K $  are represented by $ \sigma , \sigma   '    $ and $ \sigma \tau $. It is easily checked that $ H K \neq H \tau K $  which distinguishes $ U \sigma \tau K $ from $ U \sigma K $ and $ U \sigma ' K  $.  To  distinguish $ U \sigma K $ from $ U \sigma ' K  $,  we use the Cartan decomposition for the  double  quotient  $ \Hb(\ZZ_{\ell})  \backslash \Hb(\QQ_{\ell})  /  \Hb(\ZZ_{\ell})  $ and  an elementary  trick  established  in  
  \cite[Lemma 4.9.2]{AES}.   
  
Part (a) follows from   the  decomposition above. Part (b) and (c)  can  be   deduced  along the lines  outlined in    the  proof of  Lemma   \ref{deggsp4}.    
\end{proof}    
Set $ \sigma_{1}  \colonequals  \sigma $, $ \sigma_{2} \colonequals  \sigma ' $, $ \sigma_{3} \colonequals  \sigma \tau $,  so 
 that  $ I    =  \left  \{ 1 , 2,  3  \right  \} $ is  our  indexing set.    
We have $   d_{1_{G}, K  }    =      d_{\sigma_{i}, K } = 1 $ for $ i = 1 ,2   ,     3 $ by  
our choice of $ K $, Lemma   \ref{connCforxi}  and  Lemma       \ref{dg}. Lemma  \ref{B_{i,xi}} (in  conjunction with   Lemma   \ref{Bilemma})    implies  that   we may take $ B_{i} = \left \{ 1_{H} \right \} $ 
and  that  $   c_{i} = e_{i}    $ for $ i = 1, 2, 3 $.    Equation (\ref{mainfalse})    then    reads $$  1 + \ell  +  \ell^{2} +  \ell^{3}   \stackrel{?}{=}      ( 1 + \ell )  +  ( 1 + \ell)  + 1  $$
which is false for all $ \ell $.
\label{mainfalseexampletwo}  

\section{Schwartz spaces}   \label{Schwartzsection}  To have another and a technically    simpler    perspective on the failure of (\ref{falsezkg}) in general,  we investigate an auxiliary  question in the setting of Schwartz spaces motivated by the discussion in \S \ref{specialcycles}.    This section can be read independently of the rest of this    note.  

Let  $ G $ be  a locally profinite group and $ J $  be   a closed subgroup of $ G $. Then $ X \colonequals J \backslash G $ 
is a locally compact Hausdorff and totally  disconnected topological space with a continuous right action $ X \times G \to X $ given by $ (J\gamma , g)    \mapsto J \gamma g $.
Let $ \mathcal{S}_{\ZZ}(X) $ denote the   $ \ZZ $-module  of  all $ \ZZ$-valued  locally constant compactly supported functions on $ X    $.  We have an induced smooth left action  \begin{equation}  \label{natgact}  G \times \mathcal{S} _{\ZZ}(X)   \to    \mathcal{S}_{\ZZ}(X),  \quad \quad   
(g, \xi    )      \mapsto  \xi ( (  -  ) g )  
\end{equation} 
For each compact open subgroup $ K $ of $ G $, we let $ M(K) \colonequals \mathcal{S}_{\ZZ}(X/ K) = \mathcal{S}_{\ZZ}(X)^{K} $ denote the $ \ZZ $-module  of all  functions in $ \mathcal{S}_{\ZZ}(X) $ that are $ K $-invariant under the natural $ G $-action. 
For $ \sigma \in G $, we define  $  \ZZ  $-linear  maps    
\begin{alignat*}{4} \mathcal{T}(\sigma) \colon M(K) &  \to   M(K) &  \hspace{0.5in}      [K \sigma K ]_{*}  \colon M(K)  &     \to  M(K) \\ 
\ch (J g K )  &  \mapsto  \sum_{ \gamma \in K \sigma K / K }  \ch ( J g \gamma K )  &  \hspace{0.5in}  \ch(J g  K ) &     \mapsto   \sum _ {  \delta \in   K  \backslash  K \sigma K }  \ch ( J g K  \delta ) .
\end{alignat*}  
where $ \ch  (    -    )  $ denotes the characteristic function. Then $ \mathcal{T}(\sigma) $ and $ [K \sigma K]_{*} $ are respectively the analogs of (\ref{CM1}) and  (\ref{CM2}) in this setting.  
\begin{question}   \label{hypoquestion}        Is $ \mathcal{T}(\sigma) =  [K \sigma K]_{*} $ for all $ \sigma \in G $?     
\end{question}

Of course, this is the case when $ J $ is the trivial  subgroup,   
since    both $ \mathcal{T}(\sigma) $ and $ [K \sigma K]_{*} $ send  $    \ch(g K$) to the characteristic  function   of $  g    K \sigma K   $.   More generally, we  have the  following.        

\begin{proposition}   \label{hypoworks}  Suppose that for any $ \gamma  \in G $, $ J \cap \gamma K \gamma ^{-1}  $ equals a fixed subgroup of $ J  $.   Then for all $ \sigma \in G $,  $ \mathcal{T}(\sigma) = [K \sigma K ]_{*} $.  
\end{proposition}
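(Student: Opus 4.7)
The plan is to establish the equality of the two functions pointwise. Both sides are $J$-left-invariant and $K$-right-invariant functions on $G$ (descending to $X = J\backslash G$), so I will pick an arbitrary $y \in G$ and compare the integer values $(\mathcal{T}(\sigma)\ch(JgK))(y)$ and $([K\sigma K]_{*}\ch(JgK))(y)$. Unwinding definitions via the equivalences $y \in Jg\gamma K \iff \gamma K \cap g^{-1}Jy \neq \emptyset$ and $y \in JgK\delta \iff K\delta \cap g^{-1}Jy \neq \emptyset$, the first value counts right $K$-cosets $\gamma K \subset K\sigma K$ meeting the set $g^{-1}Jy$, and the second counts left $K$-cosets $K\delta \subset K\sigma K$ meeting the same set.

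A straightforward substitution reduces the problem to the case $g = 1_{G}$. Setting $J' := g^{-1}Jg$ and $y' := g^{-1}y$, the hypothesis transfers from $J$ to $J'$, with the common intersection becoming $g^{-1}J_{0}g$ (where $J_{0} := J \cap K$), and both counts are computed against the $J'$-orbit of $y'$. I therefore assume $g = 1_{G}$ and compare right versus left $K$-cosets in $K\sigma K$ that meet the orbit $Jy$.

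Next, I will parametrize both collections by $J$. The map $j \mapsto jyK$ realizes the right $K$-cosets meeting $Jy$ as the set $J/J_{0}$, using that $j_{1}yK = j_{2}yK \iff j_{1}^{-1}j_{2} \in J \cap yKy^{-1} = J_{0}$ (the last equality being the hypothesis applied with $\gamma = y$). Dually, $j \mapsto Kjy$ realizes the left $K$-cosets meeting $Jy$ as $J_{0}\backslash J$, requiring only the $\gamma = 1_{G}$ instance $J \cap K = J_{0}$. Both index sets have common cardinality $[J:J_{0}]$.

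Finally, I cut down to those cosets actually contained in $K\sigma K$. Both conditions (for right and for left cosets) amount to $jy \in K\sigma K$, so they single out the same subset $S := \{j \in J : jy \in K\sigma K\}$ of $J$; the two counts we need are $|S/J_{0}|$ and $|J_{0}\backslash S|$. Since $K\sigma K$ is bi-$K$-invariant and $J_{0} \subset K$, invariance of $S$ under left multiplication by $J_{0}$ is automatic. The main obstacle, and the only step using the full strength of the hypothesis, is right $J_{0}$-invariance of $S$: for $h \in J_{0}$ and $j \in S$, one has $jhy = (jy)(y^{-1}hy)$, and the hypothesis forces $y^{-1}hy \in K$ (since $h \in J \cap yKy^{-1} = J_{0}$), giving $jhy \in (K\sigma K)K = K\sigma K$. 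Once $S$ is known to be a union of double $J_{0}$-cosets, the counts $|S/J_{0}|$ and $|J_{0}\backslash S|$ both collapse to $|S|/|J_{0}|$, completing the argument.
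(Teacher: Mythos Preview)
Your approach is correct and genuinely different from the paper's, with one small gap at the very end.

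The paper does not evaluate pointwise. Instead it chooses an auxiliary level $L \subset K$, namely the intersection of $K$ with all $\delta^{-1}K\delta$ for $\delta \in K\backslash K\sigma K$, and uses the hypothesis to show that $JgK = \bigsqcup_{\mu \in K/L} Jg\mu L$ for any $g$. Both $\mathcal{T}(\sigma)\ch(JgK)$ and $[K\sigma K]_*\ch(JgK)$ are then expanded as sums of $\ch(J(-)L)$, and one checks that each side enumerates the cosets in $K\sigma K/L$ exactly once. Your pointwise count of left versus right $K$-cosets in $K\sigma K$ meeting $Jy$ is more direct and avoids introducing $L$; the paper's approach is closer in spirit to the ``pass to a common refinement and compare'' arguments used elsewhere in the note.

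The gap: your final sentence asserts that once $S$ is a union of double $J_0$-cosets, $|S/J_0|$ and $|J_0\backslash S|$ both equal $|S|/|J_0|$. But $J_0 = J \cap K$ is only compact open in $J$, not finite, so $|S|/|J_0|$ has no meaning as written; and for a non-normal subgroup a bi-invariant set need not contain equally many left and right cosets. The fix is immediate from the hypothesis itself: applying it with $\gamma = j \in J$ gives $J_0 = J \cap jKj^{-1} = j(J\cap K)j^{-1} = jJ_0 j^{-1}$, so $J_0$ is \emph{normal} in $J$. Then left and right $J_0$-cosets coincide and $|S/J_0| = |J_0\backslash S|$ trivially. (This also renders your separate verification of right $J_0$-invariance of $S$ unnecessary, though it is correct as written.)
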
   
\begin{proof} Let $ \Sigma $ be the collection of  all   compact  open  subgroups of $ G $ that are equal to a finite intersection of conjugates of $ K $. 
Our assumption on $ J $  implies that for any $ g \in G $ and  $ L \in \Sigma $ satisfying  $ L \subset K $,   
$ J g K = \bigsqcup _ { \mu \in K / L } J g \mu L. $ If we take $ L $ to be the intersection of $   K $ and $  \bigcap_{\delta \in  K  \backslash  K \sigma  K }   \delta  ^ { - 1  }  K \delta  $, then  $  L $ is normal in $ K $ and its conjugates by $ \delta \in K \sigma K $ are contained in $ K $. These properties imply that   for any $ \gamma , \delta \in K \sigma K $ and $ g \in G $,  
\begin{align*} \ch ( J  g \gamma K )  &  =  \sum   \nolimits  _ { \mu \in K / L }  \ch (  J g  \gamma \mu L )\\
\ch ( J   g    K  \delta  ) &  =  \sum   \nolimits      _ {  \nu \in \delta^{-1} K  \delta / K }  \ch ( J g   \delta    \nu      L )
\end{align*} 
We wish to  show that $$  \sum _ { \gamma \in K \sigma K / K }  \sum _ { \mu \in K / L } \ch (  J g  \gamma  \mu L )  =   \sum _  { \delta  \in  K  \backslash  K \sigma K  }  \sum _ { \nu \in \delta ^{-1} K   \delta / L }  \ch(  J  g \delta  \nu L ) . $$   
Both sides are sums of characteristic function on cosets $ J \backslash G / L $,  possibly with  repetitions. But both the  lists
\begin{itemize}    
\item $ \gamma \mu L $ where  $ \gamma \in K \sigma K / K $ and $ \mu \in K / L  $,         \item $  \delta \nu L   $ where  $  \delta \in   K   \backslash  K  \sigma K $ and    $ \nu  \in \delta ^{-1}  K \delta / L    $       
\end{itemize} 
of cosets in $ G / L $ enumerate  each element of  $ K \sigma K / L $  exactly 
 once.    This  proves the  claim.  
\end{proof}  
An example where the condition of Proposition  \ref{hypoworks} is  satisfied  is when $ G = \Gb(\Ab_{f}) $ for some reductive group $ \Gb $ over $ \QQ$, $ K $ is a neat compact open subgroup of $ G $  and  $ J = \Tb (\QQ) $ where $ \Tb \hookrightarrow \Gb $ is a torus in $ \Gb $ such that $ \Tb (\QQ) $ is discrete in $ \Tb(\Ab_{f}) $. Indeed, $  \Tb(\Ab_{f})  \cap \gamma K \gamma ^{-1} $ is a compact open subgroup of $ \Tb(\Ab_{f}) $ for any $ \gamma   \in    G $. So the discreteness of $ J =  \Tb(\QQ) $ in $ \Tb(\Ab_{f}) $ implies that $ J \cap \gamma K \gamma^{-1} $ is finite and the neatness of $ K  $  forces $  J \cap  \gamma K \gamma^{-1}  $  to  be torsion free   (see \cite[Definition B.6] {Anticyclo}).        Cf.\   Example      \ref{mixedCM}.       It is however also   easy to find situations where $ \mathcal{T}(\sigma) \neq [K \sigma K ]_{*}  $.  
\begin{example} Suppose  $ J = K$. Then $ M(K) = \ZZ [ K \backslash G / K ] $ is the $ \ZZ $-module of characteristic functions of double cosets in $ K \backslash G  / K $.   Now for any $ g \in G $,  
$ \mathcal{T}(\sigma)  \cdot  \ch(K  g K) = \sum_{ \gamma \in K \sigma K /  K }   \ch (K g \gamma K ) $ by definition whereas $$  
[K \sigma K]_{*}  \cdot  \ch(K g K ) = \ch(K g K ) * \ch ( K \sigma K )  $$
where $ * $ denote the convolution product on $ M(K) $ 
with respect to a Haar measure  on $  G $  $ \mu $ such that $ \mu(K) = 1 $. Note that the map  $ \mathrm{ind} \colon M(K) \to   \ZZ $ given by $      
\ch( K  \gamma  K )   \mapsto | K \gamma K / K | $ is a $ \ZZ $-algebra homomorphism (see, e.g., \cite[\S 2.3]{AES}), so $ \mathcal{T}( \sigma ) \cdot \ch (K g K )  =   [ K \sigma K ] _{*}  \cdot  \ch ( K g K ) $
would imply that   
\begin{equation}  \label{falsedeg}   \sum  _ { \gamma \in K \sigma K  / K  } |K  g \gamma K / K | = | K g K /  K    | \cdot | K \sigma K /K | .  
\end{equation}     
This is clearly false in general. For instance, if $ g = 1_{G} $, the LHS of (\ref{falsedeg})  is $ | K \sigma K / K |^{2}  $ while  the  RHS  is  $ | K \sigma K / K | $ and these are not equal as soon as $ | K \sigma  K / K  |  \neq   1    $.  
\end{example}    

\begin{remark}   \label{motivation}    We may  consider   Question   \ref{hypoquestion} as a problem of ``explicit descent''  in the  following  sense.    We know that $  \sum _ {  \delta \in    K  \backslash  K \sigma K  }  \ch ( J  g   K  \delta ) $ is an  element of $ M(K) $ as it is $ K $-invariant (a descent phenomenon) and we  even  know that its support  is   $ J  g   K \sigma K $. We however want an \emph{explicit}  linear combination of the basis $  \left \{  \ch(J \gamma    K ) \, | \,  \gamma \in G \right \}  $ of $ M(K) $ that equals this element. This  amounts to computing certain volumes of subsets of $ J $.   
If one  establishes  that  (\ref{Phi}) or an appropriate variant of it    is  a bijection and the maps are equivariant for varying $ K$,  then   one has an  alternate strategy for deriving       Corollary \ref{zkgcoro}.   This approach however  does not work in  the generality of Theorem \ref{descent}.  
\end{remark}  
\bibliographystyle{amsalpha}    
\bibliography{refs}
\Addresses

\end{document}